\numberwithin{equation}{section}
\newtheorem{thm}{Theorem}[section]
\newtheorem{cor}[thm]{Corollary}
\newtheorem{lem}[thm]{Lemma}
\theoremstyle{definition}
\newtheorem{defn}[thm]{Definition}
\theoremstyle{remark}
\newtheorem{rem}[thm]{Remark}
\newtheorem{example}{Example}
\newtheorem{conjecture}{Conjecture}
\numberwithin{equation}{section}
\DeclareMathOperator{\RE}{Re}
\DeclareMathOperator{\IM}{Im}
\begin{document}
	
	\title[\tiny{On a Special Type of Ma-Minda Function}]{On a Special Type of Ma-Minda Function}

	\author[S. Sivaprasad Kumar]{S. Sivaprasad Kumar}
	\address{Department of Applied Mathematics, Delhi Technological University, Delhi--110042, India}
	\email{spkumar@dce.ac.in}

	\author[S. Banga]{Shagun Banga}
	\address{Department of Applied Mathematics, Delhi Technological University, Delhi--110042, India}
	\email{shagun05banga@gmail.com}

	\subjclass[2010]{30C45, 30C50}
	
	\keywords{Ma-Minda function, Carath\'{e}odory coefficients, typically real function, Hankel determinant, radius problems}
	\maketitle
	\begin{abstract} This paper deals with a special type of Ma-Minda function, introduced here with many fascinating facts and interesting applications. It is much akin in all aspects but differs by a condition from its Ma-Minda counterpart. Further, we consider the function:~$1-\log(1+z)$, a special Ma-Minda of the type introduced here, to define a subclass of starlike functions in a similar fashion as we do with Ma-Minda function and is studied for establishing inclusion and radius results. Apart from that, we also deal with the majorization and Bloch function norm problems for the same class. In addition, we obtain the bounds of fourth coefficient:~$a_4$ and second Hankel determinant:~$H_2(2)$ for the functions belonging to a newly defined class using convolution, which generalizes many earlier known results and its association with the special type of Ma-Minda function is also pointed out.
	\end{abstract}
	
	\section{Introduction}
	\label{intro}
	Let $\mathcal{A}$ be the set of all normalized analytic functions $f$ of the form $f(z)=z+a_2 z^2+a_3 z^3+\cdots$ defined on the unit disk $\mathbb{D}=\{z:|z| <1\}$.
The subset $\mathcal{S}$ of $\mathcal{A}$ denotes the class of univalent normalized analytic functions.  We say, $f$ is subordinate to $g$, denoted by $f \prec g$, if there exists a Schwarz function $\omega$ with $\omega(0)=0$ and $|\omega(z)|<1$ such that $f(z)=g(\omega(z))$, where $f$ and $g$ are analytic functions. Moreover, if $g$ is univalent in $\mathbb{D}$, $f(z) \prec g(z)$ if and only if $f(0)=g(0)$ and $f(\mathbb{D}) \subseteq g(\mathbb{D})$. Define $g,h \in \mathcal{A}$ as: \begin{equation}\label{conv} g(z)= z+\sum_{n=2}^{\infty} g_n z^n \text{ and } h(z)= z+\sum_{n=2}^{\infty} h_n z^n.\end{equation} The convolution (Hadamard Product) of $g$ and $h$ is given by $(g*h)(z)=z+\sum_{n=2}^{\infty}g_n h_n z^n.$
We now introduce the following primitive class, which specializes in several well-known classes:
\begin{equation*}\mathcal{A}(g,h,\varphi)=\left\{f \in \mathcal{A}:\dfrac{(f*g)(z)}{(f*h)(z)} \prec \varphi(z), \varphi\;\text{is analytic  univalent and }\; \varphi(0)=1\right\}.\end{equation*} 
In  1985, Padmanabhan and Parvatham~\cite{padman}, considered the class $\mathcal{A}(K_a*g_1,K_a*h_1,\varphi)$,  where $K_a(z)=z/(1-z)^a$ $(a \text{ Real})$, $g_1(z):=z/(1-z)^2$ and $h_1(z):=z/(1-z)$  by imposing additional  conditions on  $\varphi$, namely it is convex and $\RE \varphi>0$. Later in the year 1989, Shanmugam~\cite{shan} extended $\mathcal{A}(K_a*g_1,K_a*h_1,\varphi)$ to $\mathcal{A}(g*g_1,g*h_1,\varphi)$ by considering a more general $g$ in place of $K_a(z)$. In 1992, Ma and Minda \cite{p3} tweaked the conditions on $\varphi$, which we shall denote by $\phi$ to introduce their own subclasses of starlike and convex functions, namely
\begin{equation}\label{maclass}
	\mathcal{S}^*(\phi)=\left\{f \in \mathcal{S}: \dfrac{z f'(z)}{f(z)} \prec \phi(z)\right\} \text{ and } \mathcal{C}(\phi)=\left\{f \in \mathcal{S}: 1+\dfrac{z f''(z)}{f'(z)}\prec \phi(z)\right\}.\end{equation} 
The Taylor series expansion of such $\phi(z)$ be of the form:
\begin{equation}\label{phi}
	\phi(z)= 1+B_1 z+B_2 z^2+B_3 z^3+\cdots\text{ } (B_1>0).\end{equation} Note that $\mathcal{S}^*(\phi):=\mathcal{A}(g_1,h_1,\phi)$ and  $\mathcal{C}(\phi):=\mathcal{A}(g_2,g_1,\phi)$ whenever $f$ is univalent and $g_2(z):=(z+z^2)/(1-z)^3.$  
We now classify the Ma-Minda function in the following definition on the basis of its conditions:
\begin{defn} \label{mm} An analytic univalent function $\phi$ with $\phi'(0) >0$, satisfying:
	\begin{description}
		\item[A] $\RE\phi(z)>0 \;\; (z\in \mathbb{D})$
		\item[B] $\phi(\mathbb{D})$ symmetric about the real axis and  starlike with respect to $\phi(0)=1$ 
	\end{description}
	is called a \emph{Ma-Minda function}, we denote the class of all such functions by $\mathscr{M}$.
	If  the condition A  above alone is relaxed, the resulting function, we call it a \emph{non-Ma-Minda of type-A}, the class of all such functions is denoted by $\widetilde{\mathscr{M}}_{\textbf{A}}.$ 
\end{defn}
Recently, the classes given in (\ref{maclass}) were studied extensively for different choices of  $\phi$. Prominently, Aouf et al. \cite{aouf}, studied the class $\mathcal{S}^*(q_c)$, where $q_c=\sqrt{1+c z}$ $(0 <c \leq 1)$,  Robertson \cite{robert} introduced the class of starlike functions of order alpha $(0\leq \alpha \leq 1)$, denoted by $\mathcal{S}^*(\alpha)$ by opting  $\phi(z)$ to be $(1+(1-2\alpha)z)/(1-z)$ and when $\phi(z)=((1+z)/(1-z))^{\eta}$, $\mathcal{S}^*(\phi)$ reduces to the class of strongly starlike functions of order $\eta$, which can be represented in terms of argument as $\mathcal{SS}^*(\eta):=\{f \in \mathcal{A}: |\arg z f'(z)/f(z)| < \eta \pi/2, \text{ } (0 <\eta \leq 1)\}.$ 
Consider the class $k-\mathcal{SP}(\alpha, \beta)$, which is introduced in~\cite{conic}, for $k=1$, it reduces to $1-\mathcal{SP}(\alpha, \beta):=\{f \in \mathcal{A}: z f'(z)/f(z) \prec \phi(z) \},$
where $\phi(z)= \alpha+((2(\alpha-\beta))/\pi^2)(\log((1+\sqrt{\nu(z)})/(1-\sqrt{\nu(z)})))^2$ with $\nu(z)=(z+\rho)/(1+\rho z)$, $\rho =((e^A-1)/(e^A+1))^2$ and $A=\sqrt{(1-\alpha)/(2(\alpha-\beta))}\pi.$

The authors in~\cite{banga,priyanka,journal,exponential} dealt with the radius, inclusion and differential subordination results for the classes involving  $\phi(z)$. Many authors have determined the coefficient bounds for the classes associated with $\phi(z)$ (see \cite{sri,second,prajapat,p9,raza}).In the past, authors considered non-Ma-Minda functions, for instance Kargar et al.~\cite{kargar} and Uralegaddi et al.~\cite{ura} considered functions in $\widetilde{\mathscr{M}}_{\textbf{A}}$ to define their classes. 

We come across the following observations, enlisted below, while examining the geometry of a function defined on $\mathbb{D}$ in general, which are of great use in deriving our results:
\begin{enumerate}
	\item A function with real coefficients is always symmetric with respect to the real axis, but not conversely, for instance:
	\begin{equation*}
		f_1(z)= iz \text{, } f_2(z)=1+iz \text{, } f_3(z)=\dfrac{1+iz}{1-z^2}.\end{equation*}
	The converse holds under special conditions, namely if $f$ is symmetric with respect to the real axis, $f(0)=0$ and $f'(0)$ be some non zero real number, then the function $f$ has real coefficients. In fact the functions $f_1$, $f_2$ and $f_3$ are symmetric with respect to the real axis but do not have real coefficients as $f'_i(0)$ is not a real number for $(i=1,2,3)$.
	\item Let $f(z)$ be an analytic function with real coefficients and $f(0)=0$. Then $f$ is  typically real if and only if its first coefficient is positive. Thus $\phi'(0)>0$ implies $\phi-1$ is typically real whereas the function $\Phi -1$ is non typically real due to $\Phi'(0)<0$.
	\item Geometrically, it is  evident that the real part of a function attains its maximum/ minimum value on the real line if and only if the function is symmetric with respect to the real axis and convex in the direction of imaginary axis.
\end{enumerate}
	
	The  Ma-Minda function $\phi$ is considered as univalent and therefore $\phi'(0)\neq 0$. Since $\phi(\mathbb{D})$ is symmetric about the real axis and if $\phi'(0)$ is any non-zero real number, then $\phi$ has real coefficients. Now to address distortion theorem, Ma-Minda perhaps restricted $\phi'(0)$ to be positive instead of any non-zero real number. However, it has no influence in establishing the coefficient, radius, inclusion, subordination, and other results for the classes $\mathcal{C(\phi)}$ and $\mathcal{S}^*(\phi)$. This very fact, which is under gloom until now, has been brought to daylight in this paper by replacing the condition $\phi'(0)> 0$ with $\phi'(0)< 0$.	Note that $\phi(z)$ and $\Phi(z):=\phi(-z)$ both map unit disk to the same image but different  orientation. 	Thus $\Phi(z)$ differs from its Ma-Minda counterpart by mere a rotation and is therefore non-typically real, but still, image domain invariant and rest all properties are intact. So $\Phi(z)$ can be considered as {\it a special type of Ma-Minda} function. We now premise the above notion in the following definition:
		\begin{defn}
		An analytic univalent function $\Phi$ defined on the unit disk $\mathbb{D}$ is said to be a $\textit{special type of Ma-Minda}$ if $\RE\Phi(\mathbb{D})>0$, $\Phi(\mathbb{D})$ is symmetric with respect to the real axis, starlike with respect to $\Phi(0)=1$ and $\Phi'(0) <0$. Further, it has a power series expansion of the form:
		\begin{equation*}
			\Phi(z)=1+\sum_{n=1}^{\infty} C_n z^n=1+C_1 z+C_2 z^2+\cdots \text{ } (C_1 <0).
		\end{equation*}
		The class of all such  special type of Ma-Minda functions are denoted by $\mathscr{M}^\circ$.
	\end{defn}
	Recently, Altinkaya et al.~\cite{alt} considered a special type of Ma-Minda function $g(z)= {\alpha(1-z)}/{(\alpha -z)}$,  ($\alpha>1)$) to define and study their class $P(\alpha)$. Now the classes $\mathcal{S}^*(\Phi)$ and $\mathcal{C}(\Phi)$ can be defined on the similar lines of $\eqref{maclass}$. We introduce here a special type of Ma-Minda function, given by 
	\begin{equation}\label{log1}
		\psi(z):= 1-\log(1+z)= 1-z+\dfrac{z^2}{2}-\dfrac{z^3}{3}+\cdots,
	\end{equation} which  maps the unit disk onto a parabolic region, see Figure~\ref{fig:incl_rel} for its boundary curve $ \tau $. Although $\phi(\mathbb{D})=\Phi(\mathbb{D})$, at times considering  $\Phi$  is advantageous over its counterpart $\phi$, which is evident from the example $\Phi(z)=1-\log(1+z)$, dealt here. Another such example is $\cos\sqrt{z}$.  Thus the special type of Ma-Minda functions can now be considered in defining Ma-Minda classes for computational convenience as all results are alike except distortion and growth. We now list in Table~1, a few examples of  $\phi \in \mathscr{M}$ and its counter part $\Phi \in \mathscr{M}^{\circ}$:

	\begin{center} \large
	\begin{tabular}{|c|c|}
		\hline	
		$\phi(z)$ &  $\Phi(z)$  \\
		\hline
		$\cos\sqrt{-z}$ & $\cos\sqrt{z}$   \\ $\sqrt{1+z}$ & $\sqrt{1-z}$ \\ $1-\log(1-z)$ &  $1-\log(1+z)$ \\	\hline
	\end{tabular}\\
	{ \small Table 1. Examples of Ma-Minda and its counter part Special type of Ma-Minda functions.}
\end{center}  
	\textit{Distortion and Growth Theorems:}
Let us define the functions in a similar manner as that in \cite{p3}: $d_{\Phi n}(n=1,2,3,\cdots)$ by $d_{\Phi n}(0)=d'_{\Phi n}(0)-1=0$ and
\begin{equation} \label{dist} 1+\dfrac{z d''_{\Phi n}(z)}{d'_{\Phi n}(z)}=\Phi(z^n),\end{equation} which belongs to the class $\mathcal{C}(\Phi)$ and we write $d_{\Phi 1}$ as $d_{\Phi}$. The structural formula of $d'_{\Phi n}$ is given by:
\begin{equation}\label{str}
	d'_{\Phi n}(z)=\exp\int_{0}^{z}\dfrac{\Phi(t^n)-1}{t}dt,\end{equation} which upon simplification, gives the structural formula of $d_{\Phi n}$. Similarly, we define $t_{\Phi n}(n=1,2,3,\cdots)$ by $t_{\Phi n}(0)=t'_{\Phi n}(0)-1=0$ and \[\dfrac{z t'_{\Phi n}(z)}{t_{\Phi n}(z)}=\Phi(z^n),\] which belongs to the class $\mathcal{S}^*(\Phi)$ and we write $t_{\Phi 1}$ as $t_{\Phi}$. The structural formula  for $t_{\Phi n}$ is given by:
\begin{equation}\label{tphi}
	t_{\Phi n}(z)=z\exp\int_{0}^{z}\dfrac{\Phi(t^n)-1}{t}dt.\end{equation} Note that $zd'_{\Phi n}(z)=t_{\Phi n}(z)$.
Ma-Minda~\cite{p3} proved the distortion and growth theorems for the classes $\mathcal{C}(\phi)$ and $\mathcal{S}^*(\phi)$ when $\phi\in \mathcal{M}$. Here, we prove that the result does not remain same in the case of functions in $\mathscr{M}^\circ$. It is examined with an example and which is further generalized. For this, let us consider the class $\mathcal{C}(\psi)$, the structural formula, given in~$\eqref{str}$ yields:
\begin{equation*}
	d'_{\psi}(z)=\exp\sum_{k=1}^{\infty}\dfrac{(-z)^k}{k^2}.
\end{equation*}
A numerical computation shows that $d'_{\psi}(1/2) \approx 0.63864$ and $d'_{\psi}(-1/2)\approx 1.79004$. Let the function $f$ be such that:
\begin{equation*}\label{ds}
	f'(z)=d'_{\psi 2}=\exp\sum_{k=1}^{\infty}\dfrac{(-1)^k(z)^{2k}}{2k^2},
\end{equation*}
clearly, $f \in \mathcal{C}(\psi)$. A numerical computation shows that $|f'(1/2)|\approx 0.88874$. Hence
\[d'_{\psi}(r) \leq |f'(z_0)|\leq d'_{\psi}(-r), \text{ for } z_0=r=\dfrac{1}{2}.\] Thus functions in $\mathcal{C}(\psi)$ violate distortion theorem, which shows that $\phi'(0)>0$ is inevitable in obtaining the distortion theorem of \cite{p3} for functions in $\mathcal{M}$.
\begin{rem} Let $\phi \in \mathscr{M}$ and its counter part $\Phi\in\mathscr{M}^\circ$ then
	$\Phi(\mathbb{D})=\phi(\mathbb{D})$, which implies $\mathcal{C}(\Phi)=\mathcal{C}(\phi)$ and $\mathcal{S}^*(\Phi)=\mathcal{S}^*(\phi)$. Therefore to obtain distortion and growth theorems for functions in $\mathcal{C}(\Phi)$ and $\mathcal{S}^*(\Phi)$, it is sufficient to replace $\phi(z)$ by $\Phi(-z)$,   in the result \cite[Corollary 1, p. 159]{p3}.
\end{rem}
Using the the above Remark and the fact $d_{\Phi}'(z)=d_{\phi}'(-z)$, we deduce the following result:
\begin{thm}[Distortion Theorem for $\mathcal{C}(\Phi)$]\label{distthm}
	Suppose $f \in \mathcal{C}(\Phi)$ and $|z_0|=r<1$. Then
	\begin{equation*}
		d'_{\Phi}(r) \leq |f'(z_0)|\leq d'_{\Phi}(-r).
	\end{equation*}
	Equality holds for some non zero $z_0$ if and only if $f$ is a rotation of $d_{\Phi}$, given in~$\eqref{dist}$.
\end{thm}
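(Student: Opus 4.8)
The plan is to reduce the entire statement to Ma-Minda's distortion theorem for $\mathcal{C}(\phi)$ through the reflection $z\mapsto-z$, exactly as the preceding Remark suggests. First I would record the two identities that do all the work. Writing $\Phi(z)=\phi(-z)$ for the Ma-Minda counterpart $\phi\in\mathscr{M}$, the substitution $s=-t$ in the structural formula~\eqref{str} gives
\begin{equation*}
d'_{\Phi}(z)=\exp\int_{0}^{z}\frac{\phi(-t)-1}{t}\,dt=\exp\int_{0}^{-z}\frac{\phi(s)-1}{s}\,ds=d'_{\phi}(-z),
\end{equation*}
and integrating this with the shared normalization $d_{\Phi}(0)=d_{\phi}(0)=0$ yields $d_{\Phi}(z)=-d_{\phi}(-z)$. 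Since $\Phi(\mathbb{D})=\phi(\mathbb{D})$ forces $\mathcal{C}(\Phi)=\mathcal{C}(\phi)$ by the Remark, every $f\in\mathcal{C}(\Phi)$ already lies in $\mathcal{C}(\phi)$, so the Ma-Minda machinery applies verbatim.

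Next I would invoke Ma-Minda's distortion theorem \cite[Corollary 1, p. 159]{p3} for $f\in\mathcal{C}(\phi)$, which reads
\begin{equation*}
d'_{\phi}(-r)\leq|f'(z_0)|\leq d'_{\phi}(r),\qquad |z_0|=r<1,
\end{equation*}
and then simply rewrite each bound through the identity $d'_{\Phi}(z)=d'_{\phi}(-z)$: the lower bound becomes $d'_{\phi}(-r)=d'_{\Phi}(r)$ and the upper bound becomes $d'_{\phi}(r)=d'_{\Phi}(-r)$, delivering $d'_{\Phi}(r)\leq|f'(z_0)|\leq d'_{\Phi}(-r)$ as claimed. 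The only point demanding attention here is the orientation: because $\Phi'(0)<0$, the reflection swaps the roles of $r$ and $-r$, so the chain of inequalities is precisely the mirror image of the Ma-Minda one.

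Finally, for the rigidity clause I would transport Ma-Minda's equality statement. Their theorem forces equality at some nonzero $z_0$ exactly when $f$ is a rotation $\overline{\lambda}\,d_{\phi}(\lambda z)$ with $|\lambda|=1$. Using $d_{\phi}(w)=-d_{\Phi}(-w)$ and setting $\mu=-\lambda$, one checks that $\overline{\lambda}\,d_{\phi}(\lambda z)=\overline{\mu}\,d_{\Phi}(\mu z)$, so a rotation of $d_{\phi}$ is literally the same object as a rotation of $d_{\Phi}$; hence equality holds if and only if $f$ is a rotation of $d_{\Phi}$. I do not anticipate a genuine obstacle in this argument—it is a verification rather than a discovery—and the one place to stay careful is keeping the two reflection identities straight, namely $d'_{\Phi}(z)=d'_{\phi}(-z)$ governing the bounds versus $d_{\Phi}(z)=-d_{\phi}(-z)$ governing the extremal functions, since they differ by the sign that enters upon integration.
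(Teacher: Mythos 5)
Your proposal is correct and follows essentially the same route as the paper: both reduce to Ma-Minda's distortion theorem via the class identity $\mathcal{C}(\Phi)=\mathcal{C}(\phi)$ and then translate the bounds through $d'_{\Phi}(z)=d'_{\phi}(-z)$. Your write-up is in fact more complete than the paper's, since you verify the reflection identities explicitly and check the equality/rotation clause (via $d_{\Phi}(z)=-d_{\phi}(-z)$ and $\mu=-\lambda$), which the paper leaves implicit.
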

\begin{proof}
	Since $\mathcal{C}(\Phi)=\mathcal{C}(\phi)$, where $\phi$ is in $\mathcal{M}$, from \cite[Corollary 1]{p3}, we get the following
	\begin{equation*}
		d_{\phi}'(-r) \leq |f'(z_0)| \leq d_{\phi}'(r).
	\end{equation*}
	Now,~$\eqref{dist}$ yields $d_{\Phi}'(z)=d_{\phi}'(-z)$, which establishes the desired result.
\end{proof}
\begin{cor}[Growth Theorem for $\mathcal{C}(\Phi)$]\label{grc}
	Suppose $f \in \mathcal{C}(\Phi)$ and $|z_0|=r<1$. Then
	\begin{equation*}
		d_{\Phi}(r) \leq |f(z_0)|\leq -d_{\Phi}(-r).
	\end{equation*}
	Equality holds for some non zero $z_0$ if and only if $f$ is a rotation of $d_{\Phi}$, given in~$\eqref{dist}$.
\end{cor}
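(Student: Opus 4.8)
The plan is to reproduce the strategy used for Theorem~\ref{distthm}, now applied to the modulus of $f$ itself rather than to its derivative. Since the Remark records $\mathcal{C}(\Phi) = \mathcal{C}(\phi)$ for the genuine Ma-Minda counterpart $\phi(z) = \Phi(-z)$, the function $f$ already lies in $\mathcal{C}(\phi)$, so the growth half of \cite[Corollary 1]{p3} applies verbatim and yields
\[
	-d_{\phi}(-r) \leq |f(z_0)| \leq d_{\phi}(r).
\]
It then remains only to re-express $d_{\phi}$ in terms of $d_{\Phi}$.

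For this I would integrate the relation $d'_{\Phi}(z) = d'_{\phi}(-z)$ already exploited in Theorem~\ref{distthm}. Because $d_{\Phi}(0) = d_{\phi}(0) = 0$, integrating from $0$ to $z$ and performing the substitution $t \mapsto -t$ gives
\[
	d_{\Phi}(z) = \int_{0}^{z} d'_{\phi}(-t)\,dt = -d_{\phi}(-z).
\]
Hence $d_{\phi}(r) = -d_{\Phi}(-r)$ and $-d_{\phi}(-r) = d_{\Phi}(r)$, and feeding these into the displayed inequality produces exactly $d_{\Phi}(r) \leq |f(z_0)| \leq -d_{\Phi}(-r)$. (The upper bound alone could also be obtained by integrating the distortion estimate $|f'(\zeta)| \leq d'_{\Phi}(-t)$ of Theorem~\ref{distthm} along the radius $\zeta = te^{i\psi}$, $0 \leq t \leq r$, which gives $|f(z_0)| \leq \int_{0}^{r} d'_{\Phi}(-t)\,dt = -d_{\Phi}(-r)$; but quoting Ma-Minda delivers the sharp lower bound at the same time.)

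For sharpness, Ma-Minda's corollary asserts equality for some nonzero $z_0$ precisely when $f$ is a rotation of $d_{\phi}$. Observing that $d_{\Phi}(z) = -d_{\phi}(-z) = e^{-i\pi}d_{\phi}(e^{i\pi}z)$ is itself the rotation of $d_{\phi}$ by the angle $\pi$, the family of rotations of $d_{\phi}$ coincides with the family of rotations of $d_{\Phi}$, so the equality condition reads ``$f$ is a rotation of $d_{\Phi}$'', as claimed. I do not anticipate any essential obstacle, since the whole argument is transport of Ma-Minda's result through an orientation-reversing change of variable; the only point demanding care is the sign bookkeeping, namely verifying that $t \mapsto -t$ exchanges the two endpoints so that the upper and lower bounds swap in exactly the required way, and confirming that the rotation by $\pi$ genuinely identifies the two extremal families so that the sharpness clause survives intact.
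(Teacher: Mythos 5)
Your proposal is correct and follows essentially the route the paper intends for this corollary: pass to $\mathcal{C}(\phi)$ via $\mathcal{C}(\Phi)=\mathcal{C}(\phi)$, invoke the growth part of \cite[Corollary 1]{p3}, and transport back through the relation $d_{\Phi}(z)=-d_{\phi}(-z)$ obtained by integrating $d'_{\Phi}(z)=d'_{\phi}(-z)$. Your sign bookkeeping and the identification of the rotation families of $d_{\phi}$ and $d_{\Phi}$ (which secures the sharpness clause) are both accurate.
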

From Corollary~\ref{grc}, we get the following growth theorem:
\begin{cor}[Growth Theorem for $\mathcal{S}^*(\Phi)$]\label{grs}
	Suppose $f \in \mathcal{S}^*(\Phi)$ and $|z_0| =r<1.$ Then
	\begin{equation*}
		t_{\Phi}(r) \leq |f(z_0)|\leq -t_{\Phi}(-r).
	\end{equation*}
	Equality holds for some non zero $z_0$ if and only if $f$ is a rotation of $t_{\Phi}$, given in~$\eqref{str}$.
\end{cor}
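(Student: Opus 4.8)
The plan is to reduce the starlike growth estimate to the convex distortion estimate already in hand, using the Alexander-type correspondence that is encoded in the very definitions of $t_{\Phi}$ and $d_{\Phi}$. First I would observe that $f\in\mathcal{S}^*(\Phi)$ if and only if its Alexander transform $g(z):=\int_{0}^{z}\frac{f(t)}{t}\,dt$ belongs to $\mathcal{C}(\Phi)$; indeed, writing $f(z)=zg'(z)$ one has
\[
\frac{zf'(z)}{f(z)}=\frac{z\bigl(zg'(z)\bigr)'}{zg'(z)}=1+\frac{zg''(z)}{g'(z)},
\]
so the subordination $zf'/f\prec\Phi$ defining $\mathcal{S}^*(\Phi)$ is exactly the subordination $1+zg''/g'\prec\Phi$ defining $\mathcal{C}(\Phi)$. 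This is the same correspondence that links the extremal functions, recorded in the excerpt as $t_{\Phi}(z)=zd'_{\Phi}(z)$.

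Next I would apply the distortion theorem for $\mathcal{C}(\Phi)$ (Theorem~\ref{distthm}) to $g$. For $|z_0|=r<1$ this gives $d'_{\Phi}(r)\le|g'(z_0)|\le d'_{\Phi}(-r)$, and since $|f(z_0)|=|z_0\,g'(z_0)|=r\,|g'(z_0)|$, multiplying through by $r$ yields
\[
r\,d'_{\Phi}(r)\le|f(z_0)|\le r\,d'_{\Phi}(-r).
\]
It then remains only to recognise the outer terms: evaluating $t_{\Phi}(z)=zd'_{\Phi}(z)$ at $z=r$ gives $t_{\Phi}(r)=r\,d'_{\Phi}(r)$, while at $z=-r$ it gives $t_{\Phi}(-r)=-r\,d'_{\Phi}(-r)$, that is $-t_{\Phi}(-r)=r\,d'_{\Phi}(-r)$. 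Substituting these identities produces the asserted bounds $t_{\Phi}(r)\le|f(z_0)|\le -t_{\Phi}(-r)$.

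For the extremal statement I would trace back the rigidity already established in Theorem~\ref{distthm}: equality for some nonzero $z_0$ there forces $g$ to be a rotation $g(z)=e^{-i\theta}d_{\Phi}(e^{i\theta}z)$ of $d_{\Phi}$. Differentiating and using $f(z)=zg'(z)$ together with $t_{\Phi}(z)=zd'_{\Phi}(z)$ shows this is equivalent to $f(z)=e^{-i\theta}t_{\Phi}(e^{i\theta}z)$, so equality in the growth estimate holds precisely when $f$ is a rotation of $t_{\Phi}$.

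The one point that needs care is the orientation reversal built into $\mathscr{M}^\circ$. Because $\Phi'(0)<0$, the factor $d'_{\Phi}$ increases toward $-r$ rather than toward $r$, so the distortion bounds sit in the order $d'_{\Phi}(r)\le d'_{\Phi}(-r)$ — opposite to the classical Ma-Minda situation — and one must keep the sign from $t_{\Phi}(-r)=-r\,d'_{\Phi}(-r)$ in order to land the upper bound as $-t_{\Phi}(-r)$ rather than $t_{\Phi}(-r)$. Once the Alexander correspondence and this sign bookkeeping are in place, the argument is a direct substitution; it mirrors the passage from Theorem~\ref{distthm} to Corollary~\ref{grc}, with the derivative estimate $|f(z_0)|=r\,|g'(z_0)|$ playing the role that the integral estimate plays for the convex growth bound.
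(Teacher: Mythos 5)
Your proof is correct, and it is in fact more complete than what the paper itself provides: the paper derives Corollary~\ref{grs} in one line, asserting only that it follows ``from Corollary~\ref{grc}'' (the growth theorem for $\mathcal{C}(\Phi)$), with no argument given. Strictly speaking, a growth bound for $\mathcal{S}^*(\Phi)$ cannot be read off from a growth bound for $\mathcal{C}(\Phi)$ alone; the reduction that actually works is the one you give, namely the Alexander correspondence $f(z)=zg'(z)$ combined with the \emph{distortion} theorem for $\mathcal{C}(\Phi)$ (Theorem~\ref{distthm}), so that $|f(z_0)|=r|g'(z_0)|$ converts the bounds $d'_{\Phi}(r)\le |g'(z_0)|\le d'_{\Phi}(-r)$ into the asserted growth bounds via the paper's identity $t_{\Phi}(z)=zd'_{\Phi}(z)$. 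The paper's intended (equally valid) route, indicated in the Remark preceding Theorem~\ref{distthm}, is instead to use $\mathcal{S}^*(\Phi)=\mathcal{S}^*(\phi)$ with $\phi(z)=\Phi(-z)\in\mathscr{M}$, quote Ma--Minda's growth theorem for $\mathcal{S}^*(\phi)$ directly, and translate back through $t_{\Phi}(z)=-t_{\phi}(-z)$; your route has the advantage of staying entirely within results already proved in the paper, and it also makes clear that, unlike the distortion theorem for $\mathcal{S}^*(\Phi)$ (Theorem~\ref{dist2}), no extra hypotheses on $\min$ and $\max$ of $|\Phi|$ are needed here. Your sign bookkeeping $t_{\Phi}(-r)=-rd'_{\Phi}(-r)$ and the transfer of the equality case through $f(z)=e^{-i\theta}t_{\Phi}(e^{i\theta}z)$ are both right. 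The only detail worth adding is that the Alexander transform $g$ really does lie in the class $\mathcal{C}(\Phi)$ as the paper defines it (which requires $g\in\mathcal{S}$): since $\RE\Phi>0$ on $\mathbb{D}$, the subordination $1+zg''(z)/g'(z)\prec\Phi(z)$ forces $g$ to be convex, hence univalent, so membership in $\mathcal{S}$ is automatic.
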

In order to prove distortion theorem for $\mathcal{S}^*(\Phi)$, we additionally assume $\min\limits_{|z|=r}= |\Phi(z)|= \Phi(r)$ and $\max\limits_{|z|=r} |\Phi(z)|= \Phi(-r)$.
\begin{thm}[Distortion Theorem for $\mathcal{S}^*(\Phi)$]\label{dist2}
	Suppose $f \in \mathcal{S}^*(\Phi)$ and $|z_0|=r<1$. Then
	\begin{equation*}
		t'_{\Phi}(r)\leq |f'(z_0)|\leq t'_{\Phi}(-r).
	\end{equation*}
	Equality holds for some non zero $z_0$ if and only if $f$ is a rotation of $t_{\Phi}$, given in~$\eqref{str}$.
\end{thm}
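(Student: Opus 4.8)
The plan is to follow the classical Ma-Minda scheme for the starlike distortion theorem: factor $f'$ as a product of a growth quantity and the subordinate quotient, bound each factor on $|z|=r$, and then recognize the extremal products as $t_{\Phi}'(\pm r)$ through the defining relation of $t_{\Phi}$. Concretely, for $f \in \mathcal{S}^*(\Phi)$ with $|z_0|=r$ I would start from the identity
$$|f'(z_0)| = \left|\frac{f(z_0)}{z_0}\right|\left|\frac{z_0 f'(z_0)}{f(z_0)}\right|,$$
which separates the estimate into a growth part and a modulus part. The growth factor is controlled by Corollary~\ref{grs}: dividing $t_{\Phi}(r) \le |f(z_0)| \le -t_{\Phi}(-r)$ by $r$ gives the bounds on $|f(z_0)/z_0|$. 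For the second factor, the defining subordination $zf'(z)/f(z) \prec \Phi(z)$ together with the subordination (Lindel\"of) principle places $z_0 f'(z_0)/f(z_0)$ inside $\Phi(\{|z|\le r\})$; since $\RE\Phi>0$ forces $\Phi$ to be zero-free, both $|\Phi|$ and $|1/\Phi|$ satisfy the maximum principle and attain their extrema over $\{|z|\le r\}$ on the circle $|z|=r$. Invoking the standing hypothesis $\min_{|z|=r}|\Phi(z)|=\Phi(r)$ and $\max_{|z|=r}|\Phi(z)|=\Phi(-r)$ then yields $\Phi(r) \le |z_0 f'(z_0)/f(z_0)| \le \Phi(-r)$.

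Multiplying the two sets of bounds and using the differentiated defining relation $t_{\Phi}'(z)=(t_{\Phi}(z)/z)\,\Phi(z)$, I would identify the lower product as $(t_{\Phi}(r)/r)\Phi(r)=t_{\Phi}'(r)$ and the upper product as $(-t_{\Phi}(-r)/r)\Phi(-r)=(t_{\Phi}(-r)/(-r))\Phi(-r)=t_{\Phi}'(-r)$, which is exactly the asserted inequality. For the equality statement I would argue that equality in either outer bound forces equality simultaneously in the growth estimate and in the modulus estimate; the former pins $f$ down to a rotation of $t_{\Phi}$ by the equality clause of Corollary~\ref{grs}, while the latter forces $z_0$ onto the real-axis extremizer, and conversely the rotations of $t_{\Phi}$ realize equality at the corresponding $z_0$.

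The delicate point is the modulus estimate for $z_0 f'(z_0)/f(z_0)$: one must be sure that the extreme values of $|\Phi|$ over the closed disk are genuinely attained at the real points $\pm r$, which is precisely why the theorem carries the extra hypothesis on $\min_{|z|=r}|\Phi|$ and $\max_{|z|=r}|\Phi|$ (an automatic feature for the Ma-Minda $\phi$, inherited by $\Phi$ after the reflection $\Phi(z)=\phi(-z)$). A shorter, equivalent route is available and parallels the proof of Theorem~\ref{distthm}: since $\mathcal{S}^*(\Phi)=\mathcal{S}^*(\phi)$, apply Ma-Minda's starlike distortion theorem $t_{\phi}'(-r)\le |f'(z_0)|\le t_{\phi}'(r)$ from \cite{p3} and combine it with $t_{\Phi}(z)=-t_{\phi}(-z)$, whence $t_{\Phi}'(z)=t_{\phi}'(-z)$, so that $t_{\phi}'(-r)=t_{\Phi}'(r)$ and $t_{\phi}'(r)=t_{\Phi}'(-r)$ deliver the claim at once.
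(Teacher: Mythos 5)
Your proposal is correct, and it actually contains two complete arguments. The paper leaves Theorem~\ref{dist2} without an explicit proof; following the pattern set by Theorem~\ref{distthm}, the intended argument is exactly your closing ``shorter route'': use $\mathcal{S}^*(\Phi)=\mathcal{S}^*(\phi)$, invoke Ma--Minda's distortion theorem for $\mathcal{S}^*(\phi)$ from \cite{p3}, and translate via $t_{\Phi}(z)=-t_{\phi}(-z)$, i.e.\ $t'_{\Phi}(z)=t'_{\phi}(-z)$, so that $t'_{\phi}(-r)=t'_{\Phi}(r)$ and $t'_{\phi}(r)=t'_{\Phi}(-r)$. Your main argument --- factoring $|f'(z_0)|=|f(z_0)/z_0|\,\bigl|z_0f'(z_0)/f(z_0)\bigr|$, bounding the first factor by Corollary~\ref{grs}, bounding the second by the Lindel\"of principle together with the standing min/max hypothesis, and then identifying the extremal products through $t'_{\Phi}(z)=(t_{\Phi}(z)/z)\Phi(z)$ --- is Ma--Minda's original proof transplanted to $\Phi$. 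It is self-contained and has the merit of making visible \emph{why} the extra hypothesis on $\min_{|z|=r}|\Phi|$ and $\max_{|z|=r}|\Phi|$ must be carried, which the reflection argument hides inside the citation; the multiplication of the two sets of bounds is legitimate since all bounding quantities are positive ($\Phi$ has real coefficients, $\RE\Phi>0$, and $t_{\Phi}(-r)<0$). One small correction: the min/max property at the real points is \emph{not} an automatic feature of Ma--Minda functions --- Ma and Minda must also assume it as an extra hypothesis in their starlike distortion theorem; what is true, and what your reflection route really needs, is that the paper's assumption $\min_{|z|=r}|\Phi(z)|=\Phi(r)$, $\max_{|z|=r}|\Phi(z)|=\Phi(-r)$ is equivalent, under $\Phi(z)=\phi(-z)$, to the hypothesis Ma and Minda impose on $\phi$.
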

\indent We now introduce the following classes involving the special type of Ma-Minda function $\psi$:
\begin{equation*}
	\mathcal{S}^*_{l}:= \left\{f \in \mathcal{S}:\dfrac{zf'(z)}{f(z)} \prec 1-\log(1+z)\right\} \text{ and }
	\mathcal{C}_{l}:= \left\{f \in \mathcal{S}:1+\dfrac{zf''(z)}{f'(z)}\prec 1-\log(1+z)\right\}.\end{equation*}
By the structural formula~$\eqref{str}$, we get a function $f \in \mathcal{S}_{l}^*$ if and only if there exists an analytic function $q$, satisfying $q(z) \prec \psi(z)$ such that
\begin{equation}\label{nstr}
	f(z)= z \exp\left(\int_{0}^{z}\dfrac{q(t)-1}{t}dt\right).
\end{equation}
Now, we give some examples of the functions in the class $\mathcal{S}_{l}^*$. For this, let us assume
\begin{equation*}\psi_1(z)=1-\dfrac{z}{6}\text{, }\psi_2(z)=\dfrac{4-z}{4+z}\text{, }\psi_3(z)=1-z \sin\dfrac{z}{4}\text{ and }\psi_4(z)=\dfrac{8-2z}{8-z}.\end{equation*}
A geometrical observation leads to $\psi_i(z)\subset \psi(z)$ $(i=1,2,3,4)$. Thus $\psi_i(z)\prec \psi(z)$. Now, the functions $f_i's$ belonging to the class $\mathcal{S}_{l}^*$ corresponding to each of the functions $\psi_i's$ are determined by the structural formula~$\eqref{nstr}$ as follows:
\[f_1(z)= z\exp\left(\dfrac{-z}{6}\right)\text{, }f_2(z)=\dfrac{16 z}{(4+z)^2}\text{, }f_3(z)=z\exp\left(4\left(1+\cos\dfrac{z}{4}\right)\right)\text{ and } f_4(z)=z-\dfrac{z^2}{8}.\] In particular, for $q(z)=\psi(z)=1-\log(1+z)$, the corresponding function obtained as follows:
\begin{equation}\label{strls}
	f_0(z)=  z \exp\left(\int_{0}^{z}\dfrac{-\log(1+t)}{t}dt\right)= z-z^2+\dfrac{3}{4}z^3-\dfrac{19}{36}z^4+\dfrac{107 }{288}z^5+\cdots,
\end{equation}
acts as an extremal function in many cases for  $\mathcal{S}_{l}^*$.
\begin{rem}\label{distrem}
	The distortion and growth theorems for $\mathcal{C}_{l}$ and $\mathcal{S}^*_{l}$ can be  obtained from that of $\mathcal{C}(\Phi)$ and $\mathcal{S}^*(\Phi)$, given in Theorem~\ref{dist}.	
\end{rem}
Here, we establish inclusion results, radius problems, majorization result and estimation of the Bloch function norm for the functions in the class $\mathcal{S}^*_{l}$. In the coefficient bound section, we consider the class:
\begin{equation}\label{mgh}
	\mathcal{A}(g,h,\phi) =:\mathcal{M}_{g,h}(\phi)= \left\{f \in \mathcal{A}: \dfrac{(f*g)(z)}{(f*h)(z)}\prec \phi(z)\text{, }\phi \in \mathscr{M}\right\},
\end{equation}
where Taylor series expansion of $g,h$ is given by~\eqref{conv} and $g_n$, $h_n>0$ with $g_n-h_n >0$. This class is defined in~\cite{murg} and authors have obtained Fekete-Szeg\"o bound for the same. We determine the bounds of fourth coefficient $|a_4|$, second Hankel determinant $|a_2a_4-a_3^2|$ and the quantity $|a_2 a_3-a_4|$ for the functions in the class $\mathcal{M}_{g,h}(\phi)$. The importance of this class lies in unification of various subclasses of $\mathcal{S}$, discussed in detail in the coefficient section. Some of our results reduce to many earlier known results of Lee et~al.~\cite{second}, Mishra et~al.~\cite{prajapat} and Singh~\cite{singh}. In view of~$\eqref{mgh}$, we also consider the class $\mathcal{M}_{g,h}(\Phi)$ for $\Phi$ in $\mathscr{M}^\circ$.
Now, we introduce the class:
$$\mathcal{M}_\alpha(\Phi)=\left\{f \in \mathcal{A}: \dfrac{zf'(z)+\alpha z^2f''(z)}{\alpha zf'(z)+(1-\alpha)f(z)}\prec \Phi(z), \;(0\leq  \alpha \leq 1)\right\}.$$
Note that when $g(z)= (z(1+(2 \alpha-1)z))/(1-z)^3$  and $h(z)=(z(1+(\alpha-1)z))/(1-z)^2$, we have $   \mathcal{M}_{g,h}(\Phi)=:\mathcal{M}_{\alpha}(\Phi)$. Further, the power series expansion of $g$ and $h$, respectively yield
\begin{equation}\label{gi}
	g_2 =~2(1+\alpha)\text{, } g_3=3(1+2\alpha)\text{, } g_4=4(1+3\alpha)\ldots \text{ and } h_2=1+\alpha\text{, }h_3=1+2\alpha\text{, } h_4=1+3\alpha\ldots.
\end{equation}
By setting $\mathcal{M}_{\alpha}(\psi)=:\mathcal{S}_{l}(\alpha)$, then  $\mathcal{S}_{l}(0)=\mathcal{S}^*_{l}$ and $\mathcal{S}_{l}(1)=\mathcal{C}_{l}$. We obtain the sharp bounds of initial coefficients such as $a_2$, $a_3$, $a_4$ and $a_5$, Fekete-Szeg\"o functional, second Hankel determinant for functions in $\mathcal{S}_{l}(\alpha)$. Further, using these sharp bounds, we estimate the third Hankel determinant bound for the functions in $\mathcal{S}_{l}(\alpha)$.
We need the following lemmas to support our results.
\begin{lem}\emph{\cite{p3}}\label{p1p2}
	Let $p \in \mathcal{P}$ be of the form $1+\sum\limits_{n=1}^{\infty}p_nz^n.$ Then
	\begin{equation*}
		|p_2 - v p_1^2| \leq
		\begin{cases}
			-4v+2, & v\leq0;\\
			2, & 0 \leq v \leq 1;\\
			4v-2, & v\geq 1.
		\end{cases}
	\end{equation*}
	When $v<0$ or $v>1$, the equality holds if and only if $p(z)$ is $(1+z)/(1-z)$ or one of its rotations. If $0<v<1$, then the equality holds if and only if $p(z)=(1+z^2)/(1-z^2)$ or one of its rotations. If $v=0$, the equality holds if and only if $p(z)=(1+\eta)(1+z)/(2(1-z))+(1-\eta)(1-z)/(2(1+z)) (0 \leq \eta \leq1)$ or one of its rotations. If $v=1$, the equality holds if and only if $p$ is the reciprocal of one of the functions such that the equality holds in the case of $v=0$. Though the above upper bound is sharp for $0<v<1$, still it can be improved as follows:
	\begin{equation}\label{C}
		|p_2-vp_1^2|+v|p_1|^2\leq 2 \quad (0<v\leq1/2) \text{ and }
		|p_2-vp_1^2|+(1-v)|p_1|^2\leq 2 \quad (1/2\leq v <1).\end{equation}
\end{lem}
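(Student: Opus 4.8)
The plan is to reduce everything to the classical parametrization of the first two Carath\'eodory coefficients and then to an elementary one-variable optimization. Recall that for $p(z)=1+\sum_{n\ge1}p_nz^n\in\mathcal{P}$ one has the well-known representation
\[
2p_2=p_1^2+(4-p_1^2)x\qquad\text{for some }x\in\mathbb{C},\ |x|\le1,
\]
together with $|p_1|\le2$; this follows from the trigonometric moment (Schur--Cohn) description of $\mathcal{P}$, equivalently by applying the Schwarz lemma to the Schwarz function associated with $p$. I would take this parametrization as the starting point.

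Substituting it into the quantity of interest gives
\[
p_2-vp_1^2=\tfrac12(1-2v)p_1^2+\tfrac12(4-p_1^2)x,
\]
so that, by the triangle inequality and $|x|\le1$,
\[
|p_2-vp_1^2|\le\tfrac12|1-2v|\,|p_1|^2+\tfrac12\bigl(4-|p_1|^2\bigr).
\]
Writing $t=|p_1|^2\in[0,4]$, the right-hand side is the affine function $\Psi(t)=2+\tfrac12(|1-2v|-1)t$, whose maximum over $[0,4]$ is read off from the sign of its slope: for $0\le v\le1$ the slope is nonpositive and the maximum $2$ occurs at $t=0$, while for $v<0$ or $v>1$ the slope is positive and the maximum $2|1-2v|$ occurs at $t=4$, giving $-4v+2$ and $4v-2$ respectively. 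This settles the three cases simultaneously.

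For the sharpened inequalities in~\eqref{C} I would not re-optimize but merely add the extra term to the displayed bound. For $0<v\le1/2$ one has $|1-2v|=1-2v$, and adding $v|p_1|^2$ makes the $|p_1|^2$-terms cancel exactly, leaving the constant $2$; the range $1/2\le v<1$ is symmetric after adding $(1-v)|p_1|^2$.

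The inequalities themselves are routine; the delicate part is the \emph{equality} analysis. For $v<0$ or $v>1$, equality forces both $|p_1|=2$ and $|x|=1$, and $|p_1|=2$ already pins $p$ down to $(1+z)/(1-z)$ or a rotation. For $0<v<1$, equality forces $t=0$, i.e.\ $p_1=0$, whence $p_2=2x$ with $|x|=1$ identifies the extremal as $(1+z^2)/(1-z^2)$ or a rotation. The genuinely careful cases are the endpoints $v=0$ and $v=1$, where $\Psi$ is constant and the extremal set becomes a one-parameter family; here one verifies directly that the convex combinations $\tfrac{1+\eta}{2}\cdot\frac{1+z}{1-z}+\tfrac{1-\eta}{2}\cdot\frac{1-z}{1+z}$ with $0\le\eta\le1$ (and their reciprocals when $v=1$) exhaust the equality cases. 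I expect this endpoint bookkeeping, rather than any estimate, to be where the real attention is needed.
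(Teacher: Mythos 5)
The paper does not prove this lemma at all: it is quoted, refinement \eqref{C} included, from Ma and Minda \cite{p3}, so there is no internal argument to measure you against. Your route is essentially the classical one: the parametrization $2p_2=p_1^2+(4-p_1^2)x$, $|x|\le 1$, is exactly the paper's Lemma \ref{p1p2p3}, and Ma--Minda's original proof runs the same computation through the Schwarz function of $p$ (writing $p_1=2c_1$, $p_2=2(c_1^2+c_2)$ and using $|c_2|\le 1-|c_1|^2$), which is the same estimate in different notation. Your affine-in-$t$ optimization reproduces the three displayed bounds, and the observation that adding $v|p_1|^2$ (resp.\ $(1-v)|p_1|^2$) cancels the $t$-terms exactly does give \eqref{C}.

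Two points, however, need repair before this stands as a complete proof. First, your triangle-inequality step silently uses $|4-p_1^2|=4-|p_1|^2$, which is false for complex $p_1$ (take $p_1=2i$: then $|4-p_1^2|=8$ while $4-|p_1|^2=0$); the parametrization of Lemma \ref{p1p2p3} is the one valid after normalizing $p_1\in[0,2]$. You must first note that $|p_2-vp_1^2|$ is invariant under $p(z)\mapsto p(e^{i\theta}z)$, since both $p_2$ and $p_1^2$ pick up the factor $e^{2i\theta}$, so one may assume $p_1\ge 0$; with that normalization everything you wrote is correct. Second, the lemma's equality statements are if-and-only-if claims. Your treatment of $v<0$, $v>1$ (equality forces $t=4$, hence $|p_1|=2$, hence $p$ is a rotation of $(1+z)/(1-z)$) and of $0<v<1$ (equality forces $p_1=0$, $|p_2|=2$, hence a rotation of $(1+z^2)/(1-z^2)$) is sound, granting the standard Schwarz-lemma facts that pin down those extremals; note that when $|p_1|=2$ the value of $x$ is irrelevant, not forced to be unimodular. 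But for $v=0$ and $v=1$, where the bound is constant in $t$ and the extremal set is the one-parameter family $(1+\eta)(1+z)/(2(1-z))+(1-\eta)(1-z)/(2(1+z))$ and its reciprocals, you only promise a direct verification; exhibiting that these functions attain equality is easy, but proving that they \emph{exhaust} all equality cases is precisely the nontrivial bookkeeping, and it is omitted. So as a freestanding argument your proposal fully establishes the inequalities, but not the complete sharpness characterization.
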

\begin{lem}\emph{\cite{p1}}\label{p1p2p3}
	Let $p \in \mathcal{P}$ be of the form $1+\sum\limits_{n=1}^{\infty}p_nz^n.$ Then
	\begin{equation*}
		2p_2=p_1^2+x(4-p_1^2),
	\end{equation*}
	\begin{equation*}
		4p_3=p_1^3+2p_1(4-p_1^2)x-p_1(4-p_1^2)x^2+2(4-p_1^2)(1-|x|^2)y
	\end{equation*}
	for some $x$ and $y$ such that $|x|\leq1$ and $|y|\leq1$.
\end{lem}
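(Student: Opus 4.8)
The plan is to realise any $p\in\mathcal{P}$ through its associated Schwarz function and then invoke the Schur parametrisation of the coefficients of a bounded function. Since $p(0)=1$ and $\RE p>0$ on $\mathbb{D}$, the map
$$\omega(z)=\frac{p(z)-1}{p(z)+1}$$
is analytic with $\omega(0)=0$ and $|\omega(z)|<1$; write $\omega(z)=c_1z+c_2z^2+c_3z^3+\cdots$. Inverting the relation as $p=(1+\omega)/(1-\omega)=1+2(\omega+\omega^2+\omega^3+\cdots)$ and comparing Taylor coefficients gives the routine identities
$$p_1=2c_1,\qquad p_2=2c_2+2c_1^2,\qquad p_3=2c_3+4c_1c_2+2c_1^3.$$

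First I would record the coefficient region of a Schwarz function. Applying the Schur algorithm to the bounded function $g(z)=\omega(z)/z$ (which satisfies $g(0)=c_1$ and $|g|\le1$), one obtains, for some $x,y$ with $|x|\le1$, $|y|\le1$,
$$c_2=(1-|c_1|^2)\,x,\qquad c_3=(1-|c_1|^2)\bigl[(1-|x|^2)\,y-\bar c_1x^2\bigr].$$
The delicate point is the quadratic term: expanding the Schur relation $g=(\gamma_0+zg_1)/(1+\bar\gamma_0zg_1)$, where $\gamma_0=c_1$ and $g_1$ is the next (disc-valued) Schur iterate, to order $z^2$ produces the coefficient $-\bar\gamma_0(1-|\gamma_0|^2)$, and retaining this full factor is exactly what yields the balanced form above. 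Alternatively one may cite the coefficient-body description of $\mathcal{P}$ coming from the Carath\'eodory--Toeplitz nonnegativity of the Hermitian forms in $p_1,p_2,p_3$.

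Following the standard convention for this lemma (attainable by the rotation $p(z)\mapsto p(e^{i\theta}z)$), I would take $p_1\in[0,2]$ real, so that $|p_1|^2=p_1^2$ and $c_1=p_1/2$ with $1-|c_1|^2=(4-p_1^2)/4$. Substituting the parametrisation into $p_2=2c_2+2c_1^2$ gives at once $2p_2=p_1^2+(4-p_1^2)x$. For the cubic term I would insert $c_2,c_3$ into $p_3=2c_3+4c_1c_2+2c_1^3$ and collect: the piece $2c_1^3$ contributes $p_1^3/4$, the mixed term $4c_1c_2$ contributes $\tfrac12 p_1(4-p_1^2)x$, and $2c_3$ supplies both $-\tfrac14 p_1(4-p_1^2)x^2$ and $\tfrac12(4-p_1^2)(1-|x|^2)y$; multiplying through by $4$ reproduces the stated expression for $4p_3$.

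I expect the only genuine obstacle to be the bookkeeping in the Schur step, namely obtaining the third coefficient of the Schwarz function in the precise normalised shape $c_3=(1-|c_1|^2)\bigl[(1-|x|^2)y-\bar c_1x^2\bigr]$: an omitted factor of $(1-|c_1|^2)$ there (an easy slip) would change the coefficient of $x^2$ in the final identity from $-p_1(4-p_1^2)$ to $-4p_1$. Everything downstream, namely the passage from $\omega$ to $p$ and the final substitution, is routine algebra once that representation is secured.
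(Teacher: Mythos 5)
Your derivation is correct, but there is nothing in the paper to compare it against: the paper states this lemma with a bare citation to Grenander--Szeg\H{o} \cite{p1} (it is also the well-known Libera--Z{\l}otkiewicz lemma \cite{libera}) and gives no proof, so what you have written is a genuine self-contained proof rather than a reconstruction of the paper's argument. Your route is the constructive one: pass to the Schwarz function $\omega=(p-1)/(p+1)$, parametrize its coefficients by the Schur algorithm as $c_2=(1-|c_1|^2)x$ and $c_3=(1-|c_1|^2)\bigl[(1-|x|^2)y-\bar{c}_1x^2\bigr]$, then translate back through $p_1=2c_1$, $p_2=2c_2+2c_1^2$, $p_3=2c_3+4c_1c_2+2c_1^3$. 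I checked the bookkeeping, including the point you flagged as delicate: the $z^2$-coefficient of $g=(\gamma_0+zg_1)/(1+\bar{\gamma}_0zg_1)$ is indeed $(1-|c_1|^2)(d_1-\bar{c}_1d_0^2)$, so the full factor $(1-|c_1|^2)$ multiplies the $x^2$ term, and your final collection of terms reproduces both displayed identities exactly. The classical sources instead deduce these formulas from Carath\'eodory--Toeplitz nonnegativity of the Toeplitz forms in $p_1,p_2,p_3$, which is less constructive but yields membership conditions in all orders at once; your approach buys transparency about the extremal configurations. Two minor caveats: (a) as stated, with $p_1^2$ rather than $|p_1|^2$, the identities are literally valid only for real $p_1$, and your normalization $p_1\in[0,2]$ is exactly the convention the paper adopts when it applies the lemma (``we assume $p_1=:p\in[0,2]$'' in the proof of Theorem~\ref{sec}); (b) the Schur step tacitly assumes $|c_1|<1$, since the iterate $g_1$ is undefined when $|c_1|=1$ --- in that degenerate case $\omega(z)=c_1z$, so $c_2=c_3=0$ and $4-p_1^2=0$ make both identities trivial, and similarly $|x|=1$ forces $d_1=0$; these cases should be dispatched in a sentence, but they cost nothing.
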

The following result is proved in \cite{libera}:
\begin{lem}
	Let $p \in \mathcal{P}$ with coefficients $p_n$ as above, then
	\begin{equation}\label{p31}
		|p_3-2p_1p_2+p_1^3|\leq 2 \text{ and }	|p_1^4-3p_1^2p_2+p_2^2+2p_1p_3-p_4| \leq 2.
	\end{equation}
\end{lem}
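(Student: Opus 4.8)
The plan is to exploit the fact that the Carath\'eodory class $\mathcal{P}$ is closed under reciprocation and then apply a single classical coefficient bound. Indeed, if $p \in \mathcal{P}$ then $\RE p >0$ forces $p$ to omit the value $0$, so $1/p$ is analytic on $\mathbb{D}$, satisfies $(1/p)(0)=1$, and has $\RE(1/p)=(\RE p)/|p|^2>0$; hence $1/p \in \mathcal{P}$. I would then invoke the classical Carath\'eodory bound, namely that every $P(z)=1+\sum_{n\ge1}c_nz^n$ in $\mathcal{P}$ obeys $|c_n|\le 2$ for all $n\ge 1$. The entire proof then reduces to the observation that the two expressions appearing in \eqref{p31} are, up to sign, the third and fourth Taylor coefficients of $1/p$.

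Concretely, I would write $1/p(z)=1+\sum_{n\ge 1}b_n z^n$ and equate coefficients in the identity $p(z)\cdot(1/p)(z)=1$, which yields the recursion $b_n=-\sum_{k=1}^{n}p_k b_{n-k}$ with $b_0=1$. Running this through $n=4$ produces $b_1=-p_1$, $b_2=p_1^2-p_2$, and then
\[
b_3 = -\bigl(p_3-2p_1p_2+p_1^3\bigr), \qquad b_4 = p_1^4-3p_1^2p_2+p_2^2+2p_1p_3-p_4.
\]
Since $1/p\in\mathcal{P}$, the bound $|b_3|\le 2$ gives $|p_3-2p_1p_2+p_1^3|\le 2$, and $|b_4|\le 2$ gives the second inequality verbatim, because $b_4$ is precisely the quantity $p_1^4-3p_1^2p_2+p_2^2+2p_1p_3-p_4$.

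The only genuine care required is the algebraic bookkeeping in the recursion for $b_4$, which is the most error-prone step; otherwise both estimates are immediate from the two standard facts above, and the argument is uniform for the two inequalities. Should one wish to avoid the reciprocal observation, the first inequality can alternatively be derived directly from Lemma~\ref{p1p2p3}: after normalizing $p_1\in[0,2]$ by a rotation and substituting the formulas for $p_2$ and $p_3$, the quantity $p_3-2p_1p_2+p_1^3$ becomes an explicit expression in $p_1$, $x$, $y$, and the triangle inequality (using $|x|,|y|\le1$) reduces the task to maximizing a function that is concave in $|x|$ over a compact region. However, this route does not directly reach the second inequality, since $p_4$ does not occur in Lemma~\ref{p1p2p3}; for that reason I expect the reciprocal approach to be both shorter and more robust, and it is the one I would present.
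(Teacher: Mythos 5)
Your proposal is correct, and note that the paper offers no proof of this lemma at all: it simply states the result with a citation to Libera and Z{\l}otkiewicz \cite{libera}, so there is no internal argument to compare against. Your reciprocal argument is sound and self-contained: since $\RE p>0$ forces $p\neq 0$ in $\mathbb{D}$, the function $1/p$ is analytic with $(1/p)(0)=1$ and $\RE(1/p)=\RE p/|p|^{2}>0$, hence $1/p\in\mathcal{P}$; your recursion $b_{n}=-\sum_{k=1}^{n}p_{k}b_{n-k}$ is the right bookkeeping, and the algebra checks out, namely
\begin{equation*}
b_{1}=-p_{1},\quad b_{2}=p_{1}^{2}-p_{2},\quad b_{3}=-\bigl(p_{3}-2p_{1}p_{2}+p_{1}^{3}\bigr),\quad b_{4}=p_{1}^{4}-3p_{1}^{2}p_{2}+p_{2}^{2}+2p_{1}p_{3}-p_{4},
\end{equation*}
so Carath\'eodory's bound $|b_{n}|\le 2$ applied to $1/p$ delivers both inequalities simultaneously. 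This is essentially the classical mechanism underlying the cited result (these expressions arise as early coefficients of a reciprocal, which is again a Carath\'eodory function), but your write-up has the virtue of being short, uniform in the two cases, and independent of the representation formulas of Lemma~\ref{p1p2p3}; you are also right that the route through Lemma~\ref{p1p2p3} cannot reach the second inequality, since that lemma provides no formula for $p_{4}$. One small addition worth making: both bounds are attained for $p(z)=(1+z)/(1-z)$, for which $1/p(z)=(1-z)/(1+z)$ has all coefficients of modulus $2$, so the estimates obtained this way are sharp and nothing is lost by passing to the reciprocal.
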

	\section{Radius problems}\label{sec2}
	
	Besides majorization, this section chiefly focuses on estimating various radius constants associated with $\mathcal{S}_{l}^*$. We begin with establishing the following bounds meant for $\mathcal{S}_{l}^*:$
	\begin{thm}\label{bounds}
		Let $f \in \mathcal{S}_{l}^*$. Then we have for $|z|=r<1$,
		\begin{equation}\label{re}
			1-\log(1+r) \leq \RE \dfrac{z f'(z)}{f(z)}\leq 1-\log(1-r)\end{equation} and \begin{equation}\label{im}\left|\IM \dfrac{z f'(z)}{f(z)}\right|\leq \tan^{-1}\left(\dfrac{r}{\sqrt{1-r^2}}\right).
		\end{equation}
	\end{thm}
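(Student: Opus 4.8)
The plan is to exploit the subordination hypothesis directly, reducing everything to an image-of-a-disk computation for $\psi$. Since $f \in \mathcal{S}_{l}^*$, by definition $zf'(z)/f(z) \prec \psi(z) = 1-\log(1+z)$, so there is a Schwarz function $\omega$ with $\omega(0)=0$ and $|\omega(z)|<1$ for which
\[
	\frac{zf'(z)}{f(z)} = \psi(\omega(z)) = 1-\log\bigl(1+\omega(z)\bigr).
\]
By the Schwarz lemma, $|\omega(z)| \leq |z| = r$, so upon writing $w = \omega(z)$ the problem becomes bounding the real and imaginary parts of $1-\log(1+w)$ as $w$ ranges over the closed disk $|w| \leq r$. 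In other words, I would show that $\psi$ carries this disk into the strip and parabola-type region described by \eqref{re} and \eqref{im}.

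For the real part I would use $\RE\bigl(1-\log(1+w)\bigr) = 1-\log|1+w|$. The triangle inequality gives $1-r \leq |1+w| \leq 1+r$ whenever $|w| \leq r$, and monotonicity of the logarithm then yields $1-\log(1+r) \leq 1-\log|1+w| \leq 1-\log(1-r)$, which is exactly \eqref{re}; the extreme values are attained at $w = \pm r$. For the imaginary part I would note that $\IM\bigl(1-\log(1+w)\bigr) = -\arg(1+w)$, so the task reduces to bounding $|\arg(1+w)|$ as $1+w$ ranges over the disk $D(1,r)$ of radius $r$ centered at $1$. The argument is extremized along the two lines through the origin tangent to this circle; since the center lies at distance $1$ from the origin while the radius is $r<1$, these tangent lines make an angle $\theta$ with the positive real axis satisfying $\sin\theta = r$, whence $\tan\theta = r/\sqrt{1-r^2}$. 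This gives $|\arg(1+w)| \leq \tan^{-1}\bigl(r/\sqrt{1-r^2}\bigr)$ and establishes \eqref{im}.

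The main obstacle will be the imaginary-part estimate. Unlike \eqref{re}, which follows from a one-variable monotonicity argument in $|1+w|$, the bound \eqref{im} is genuinely two-dimensional: one must observe that the maximal argument over a disk not containing the origin is attained precisely at the points of tangency, and then extract $\theta$ from elementary trigonometry of the right triangle formed by the origin, the center, and the tangency point. I expect the remaining steps to be routine once this geometric observation is in place, since $\psi$ is univalent and so the extremal behaviour on $|w|\le r$ is governed by the image of the boundary circle $|w|=r$.
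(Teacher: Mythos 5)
Your proposal is correct and follows essentially the same route as the paper: both reduce the problem to bounding $\bigl|1+\omega(z)\bigr|$ and $\arg\bigl(1+\omega(z)\bigr)$ over the disk of radius $r$ centered at $1$, handling the real part by the triangle inequality and monotonicity of $\log$, and the imaginary part by the tangent lines from the origin to that disk. The only difference is cosmetic: the paper finds the tangent slope $\pm r/\sqrt{1-r^2}$ by setting a discriminant to zero, whereas you read it off from the right triangle via $\sin\theta = r$.
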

	\begin{proof}
		Since $f \in \mathcal{S}_{l}^*$, we have $z f'(z)/f(z) \prec 1-\log(1+z)$. Thus, by the definition of subordination, we have
		\begin{equation}\label{lo}
			\dfrac{z f'(z)}{f(z)} =1-\log|q(z)|-i \arg(q(z)),
		\end{equation}
		where $q(z)=1+\omega(z)$, $\omega$ be a Schwarz function satisfying $\omega(0)=0$ and $|\omega(z)|\leq |z|$. Let $q(z)=u+iv$, then $(u-1)^2+v^2<1$. For $|z|=r$, we have \begin{equation}\label{q}|q-1|\leq r.\end{equation}Squaring both sides of the above equation yields
		\begin{equation}\label{t}
			T: (u-1)^2+v^2 \leq r^2.
		\end{equation}
		Clearly $T$ represents the equation of the disk with center: $(1,0)$ and radius $r$, for which the point $(0,0)$ lies outside the disk $T$. From~$\eqref{q}$ and~$\eqref{t}$, we have
		\[|q(z)| \leq 1+r \text{ and } |q(z)| \geq  1-r.\]
		Since, $\log x$ is an increasing function on $[1,\infty)$, we have
		\begin{equation*}
			\log(1-r) \leq \log|q(z)| \leq \log(1+r).
		\end{equation*}
		Thus we get the desired result~$\eqref{re}$ by taking the real part in~$\eqref{lo}$. If $v=au$, representing the equation of the tangent to the boundary of disk $T$, which passes through the origin $O$, then the tangent and the boundary of the disk have a common point, hence from~$\eqref{t}$, we obtain\begin{equation*}
			(1+a^2)u^2-2u+1-r^2=0.
		\end{equation*}
		By the definition of tangent, we get
		\begin{equation*}
			1-(1+a^2)(1-r^2)=0 \text{, }\text{ which}\text{ yields } a=\pm \dfrac{r}{\sqrt{1-r^2}}.
		\end{equation*}
		Therefore we have
		\begin{equation*}
			\tan^{-1}\left(\dfrac{-r}{\sqrt{1-r^2}}\right) \leq \arg q(z) \leq \tan^{-1}\left(\dfrac{r}{\sqrt{1-r^2}}\right).
		\end{equation*}
		Thus we  obtain the desired result~$\eqref{im}$.
	\end{proof}
	\begin{thm}\label{rad}
		Let $f \in \mathcal{S}_{l}^*$. Then the following holds:
		\begin{itemize}
			\item[(i)] $f$ is starlike of order $\alpha$ in $|z| < \exp(1-\alpha)-1$ whenever $1-\log 2 \leq \alpha<1$.
			\item[(ii)] $f \in \mathcal{M}(\beta)$ in $|z|< 1-\exp(1-\beta)$ whenever $\beta >1$.
			\item[(iii)] $f$ is convex of order $\alpha$ in $|z| < \tilde{r}(\alpha)<1$ whenever $0\leq \alpha<1$, where $\tilde{r}(\alpha)$ is the smallest positive root of the equation:
			\begin{equation}\label{root2}
				(1-r)(1-\log(1+r))(1-\log(1+r)-\alpha)-r =0,
			\end{equation}
			for the given value of $\alpha$.
			\item[(iv)] $f$ is strongly starlike of order $\gamma$ in $|z|< r(\gamma)$ whenever $0< \gamma \leq \gamma_0 \approx 0.514674$, where
			\begin{equation}\label{rgamma}
				r(\gamma)=\sqrt{2 \left(1-\dfrac{1}{\sqrt{1+\tan^2\left(\tan \dfrac{\gamma \pi}{2}\right)}}\right)}.
			\end{equation}
			\item[(v)] $f$ is $k-$starlike function in $|z|< r(k)$ whenever $k>0$, where $r(k)$ is the smallest positive root of the equation \begin{equation}\label{root1} 1+r-e(1-r)^k=0,\end{equation} for the given value of $k$. In particular, for $k=1$, $f$ is parabolic starlike in $|z| < \tfrac{e-1}{e+1}$.
		\end{itemize}
	\end{thm}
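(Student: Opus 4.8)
The plan is to translate each geometric requirement into a pointwise inequality for $p(z):=zf'(z)/f(z)$ (or, in (iii), for $1+zf''(z)/f'(z)$) and then feed in the estimates already secured in Theorem~\ref{bounds}. Throughout I use that $f\in\mathcal{S}_l^*$ gives $p=1-\log(1+\omega)$ for a Schwarz function $\omega$, that for $|z|=r$ the value $p(z)$ lies in $\psi(\overline{\mathbb{D}_r})$, and in particular $1-\log(1+r)\le\RE p\le 1-\log(1-r)$ by \eqref{re}. Parts (i) and (ii) are then immediate: starlikeness of order $\alpha$ is $\RE p>\alpha$, for which \eqref{re} makes it enough that $1-\log(1+r)\ge\alpha$, i.e. $r\le e^{1-\alpha}-1$, the side condition $\alpha\ge 1-\log 2$ being exactly what keeps this radius at most $1$; dually, membership in $\mathcal{M}(\beta)$ is $\RE p<\beta$, for which \eqref{re} makes it enough that $1-\log(1-r)\le\beta$, i.e. $r\le 1-e^{1-\beta}$, a genuine radius precisely when $\beta>1$.

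For (iii) I would use the logarithmic-derivative identity $1+zf''(z)/f'(z)=p(z)+zp'(z)/p(z)$, so convexity of order $\alpha$ reduces to $\RE(p+zp'/p)>\alpha$, and I bound $\RE(p+zp'/p)\ge\RE p-|zp'/p|$. From $p=1-\log(1+\omega)$ one has $zp'=-z\omega'/(1+\omega)$, and the Schwarz--Pick estimate $|\omega'|\le(1-|\omega|^2)/(1-|z|^2)$ together with $|1+\omega|\ge 1-|\omega|$ gives $|zp'|\le r/(1-r)$; since $\RE p\ge 1-\log(1+r)>0$ we also have $|p|\ge 1-\log(1+r)$. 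Hence $\RE(p+zp'/p)\ge (1-\log(1+r))-\frac{r/(1-r)}{1-\log(1+r)}$, and requiring this to be $\ge\alpha$ and clearing the positive factor $(1-r)(1-\log(1+r))$ produces exactly the expression whose vanishing is \eqref{root2}. Because the left side equals $1-\alpha>0$ at $r=0$, the convexity radius is its smallest positive root $\tilde r(\alpha)$.

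Part (v) is the conic condition $\RE p>k|p-1|$, i.e. $\RE\psi(z)-k|\psi(z)-1|>0$ on $\overline{\mathbb{D}_r}$. I would estimate the two pieces separately: $\RE\psi=1-\log|1+z|\ge 1-\log(1+r)$ with the minimum at $z=r$, and $|\psi-1|=|\log(1+z)|\le-\log(1-r)$ with the maximum at $z=-r$, the point nearest the singularity of $\log(1+z)$ at $z=-1$. Combining, $\RE p-k|p-1|\ge 1-\log(1+r)+k\log(1-r)$, which is nonnegative exactly when $1+r\le e(1-r)^k$, i.e. up to the smallest positive root of \eqref{root1}; for $k=1$ this gives $r=(e-1)/(e+1)$. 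The only thing needing care here is the assertion that $|\log(1+z)|$ is maximised at $z=-r$ on the circle $|z|=r$, which follows from the location of the branch point.

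The genuinely delicate step is (iv). Strong starlikeness of order $\gamma$ asks that $p$ map into the sector $|\arg w|<\gamma\pi/2$, and since $p(\mathbb{D}_r)\subseteq\psi(\overline{\mathbb{D}_r})$ it suffices to locate $\max_{|z|\le r}|\arg\psi(z)|$ for $\psi(z)=(1-\log|1+z|)-i\arg(1+z)$. The extremal configuration I would aim for is the one with $|1+z|=1$, so that $\RE\psi=1$ exactly; writing $1+z=e^{i\beta}$ forces $r=|z|=2\sin(\beta/2)$ and $|\arg\psi|=\arctan\beta$, and imposing $\arctan\beta=\gamma\pi/2$ yields $\beta=\tan(\gamma\pi/2)$ and hence $r=2\sin(\tfrac12\tan\tfrac{\gamma\pi}{2})$, which is the closed form \eqref{rgamma} after using $1/\sqrt{1+\tan^2 x}=\cos x$ and $2(1-\cos x)=4\sin^2(x/2)$; the threshold $\gamma_0\approx0.514674$ is just the value making $r(\gamma_0)=1$. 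The hard part will be rigorously justifying this extremal point: one must analyse the stationarity condition $\RE(z\psi'(z)/\psi(z))=0$ on $|z|=r$ and confirm that the largest argument over $\psi(\overline{\mathbb{D}_r})$ is attained on the circle $|1+z|=1$ rather than elsewhere on the boundary. Once the location of the extremal point is settled, the passage to \eqref{rgamma} is only trigonometry.
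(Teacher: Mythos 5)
Parts (i), (ii), (iii) and (v) of your plan are correct and coincide with the paper's own proof. For (i)--(ii) the paper reads the radii off \eqref{re} exactly as you do; for (iii) it uses the same identity $1+zf''(z)/f'(z)=p(z)+zp'(z)/p(z)$ with $p=1-\log(1+\omega)$, the same Schwarz--Pick bound on $|\omega'|$, and the same lower bound $|p|\ge\RE p\ge 1-\log(1+r)$, arriving at \eqref{root2}; for (v) the paper proves your branch-point heuristic rigorously (it differentiates $|\log(1+Re^{it})|^2$ in $t$ to show the maximum of $|\log(1+w)|$ on $|w|\le r$ is $-\log(1-r)$, attained at $w=-r$) and then derives \eqref{root1} exactly as you do. The only small omission in (iii) is that, to guarantee the smallest positive root exists, one should also note the defining expression is negative at $r=1$, as the paper does.

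The genuine problem is (iv), and it is not merely a deferred technicality: the step you postpone --- that $\max_{|z|\le r}|\arg\psi(z)|$ is attained where $|1+z|=1$ --- is false. Writing $1+z=\rho e^{i\theta}$, maximizing $\theta/(1-\log\rho)$ on $|z|=r$ gives the stationarity condition $\theta\sin\theta/(1-\log\rho)=\rho-\cos\theta$, which at $\rho=1$ reduces to $\tan(\theta/2)=\theta$; this has no root in the relevant range $0<\theta\le\pi/3$, so your candidate point is never critical. Concretely, at $r=1/2$ one has $\tan|\arg\psi(i/2)|=\arctan(\tfrac12)/\bigl(1-\tfrac12\log\tfrac54\bigr)\approx 0.522$, i.e.\ $|\arg\psi(i/2)|\approx 0.481$, whereas the point with $|1+z|=1$ on the same circle gives only $\arctan\bigl(2\arcsin\tfrac14\bigr)\approx 0.468$. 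Consequently the radius \eqref{rgamma} is an overestimate: choosing $\gamma$ with $\gamma\pi/2=\arctan(2\arcsin\tfrac14)$, so that $r(\gamma)=\tfrac12$, the extremal function $f_0$ of \eqref{strls} already satisfies $|\arg(zf_0'(z)/f_0(z))|=|\arg\psi(z)|\approx 0.472>\gamma\pi/2$ at $z=0.49i$, a point with $|z|<r(\gamma)$. So the missing step cannot be supplied; a correct argument would have to locate the true angular maximum of $\psi$ on $|z|=r$ (which sits near the imaginary axis, not on $|1+z|=1$) and would produce a strictly smaller radius. For what it is worth, the paper's own proof has exactly the same hole: it exhibits the same boundary point $z_0$ and then settles the required inclusion with the phrase ``a geometrical observation reveals,'' so your proposal faithfully reproduces the published argument, defect included --- but as a proof of (iv) it is incomplete, and the claim it aims at is not correct as stated.
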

	\begin{proof}
		(i) Since $f \in \mathcal{S}_{l}^*$, we obtain the following from Theorem~\ref{bounds}:
		\begin{equation*}
			\RE \dfrac{z f'(z)}{f(z)} \geq 1-\log(1+r), \quad |z|=r <1,\end{equation*} which yields the inequality $\RE (z f'(z))/f(z) > \alpha,$ whenever $1-\log 2 \leq \alpha <1,$ which holds true in the open disk of radius $\exp(1-\alpha)-1.$ For the function $f_0$, given in~$\eqref{strls}$ and $z_0=\exp(1-\alpha)-1$, we have $\RE(z_0 f_0'(z)/f_0(z))=\alpha.$ Hence this result is sharp.\\
		\noindent (ii) From Theorem~\ref{bounds}, we get
		\begin{equation*}
			\RE \dfrac{z f'(z)}{f(z)} \leq 1-\log(1-r),\quad |z|=r<1,
		\end{equation*}
		which yields the following inequality $\RE (z f'(z)/f(z)) < \beta,$ for $\beta >1,$
		which holds true in the open disk of radius $1-\exp(1-\beta)$. For the function $f_0$, given in~$\eqref{strls}$ and $z_0=\exp(1-\beta)-1$, we get \[\RE \dfrac{z_0 f_0'(z)}{f_0(z)}=\beta.\] Therefore the result is sharp.\\
		\noindent (iii) Let $f \in \mathcal{S}_{l}^*$. Now, $f \in \mathcal{C}_{l}(\alpha)$, whenever
		\begin{equation*}
			\RE \left(1+\dfrac{z f''(z)}{f'(z)}\right) > \alpha.
		\end{equation*}
		Since we have $ z f'(z)/f(z) =1-\log(1+\omega(z)),$ where $\omega$ is a Schwarz function, we obtain
		\begin{align}\label{8}
			\RE \left(1+\dfrac{z f''(z)}{f'(z)}\right)=\RE(1-\log(1+\omega(z)))-\RE\left(\dfrac{z \omega^{'}(z)}{(1+\omega(z))(1-\log(1+\omega(z)))}\right).
		\end{align}
		The function $\omega$ satisfies the following inequality, given in \cite{nehari}\begin{equation}\label{om}|\omega^{'}(z)| \leq \dfrac{1-|\omega(z)|^2}{1-|z|^2}.\end{equation} Using the above result,~$\eqref{8}$ reduces to
		\begin{equation*}
			\RE\left(1+\dfrac{z f''(z)}{f'(z)}\right) \geq \dfrac{(1-r)(1-\log(1+r))^2-r}{(1-r)(1-\log(1+r))}=:\nu(r).	
		\end{equation*}
		Let $\nu(r,\alpha):=\nu(r)-\alpha=(1-r)(1-\log(1+r))(1-\log(1+r)-\alpha)-r$.  Clearly, $\nu(0,\alpha)>0$ and $\nu(1,\alpha)<0$ for all $\alpha \in [0,1)$. Thus, there must exist $\tilde{r}(\alpha)$ such that $\nu(r,\alpha)\geq 0$ for all $r \in [0,\tilde{r}(\alpha)]$, where $\tilde{r}(\alpha)$ is the smallest positive root of the equation~$\eqref{root2}$. Hence the result.\\
		\noindent (iv) Since $f \in \mathcal{S}_{l}^*$, we have \[\left|\arg \dfrac{z f'(z)}{f(z)}\right| \leq |\arg(1-\log(1+z))|.\] Now, $f \in \mathcal{SS}^*(\gamma)$ whenever
		\begin{equation*}
			\left|\arg\left(1-\dfrac{1}{2}\log((1+x)^2+y^2)- i \arctan\dfrac{y}{1+x}\right)\right|< \gamma \dfrac{\pi}{2},
		\end{equation*}
		for $|z|= \sqrt{x^2+y^2}<1$. Consider the function $f_0$, given in~$\eqref{strls}$ and let us assume $$z_0=\dfrac{1}{\sqrt{1+A^2}}-1 + i \dfrac{A}{\sqrt{1+A^2}},$$ where $A=\tan(\tan \gamma \pi/2)$. We have $|z_0|= \sqrt{2 (1-1/(\sqrt{1+A^2})} <1$, whenever $ \gamma \leq \gamma_0$ and
		\begin{align*}
			\left|\arg(1-\log(1+z_0))\right|&=\left|\arg \dfrac{z_0 f_0'(z_0)}{f_0(z_0)}\right|\nonumber\\& = |\arg(1-i \arctan(\tan (\tan \gamma \pi/2)))|\nonumber\\& = \left|\arctan\left(- \arctan(\tan(\tan \gamma \pi/2))\right)\right|\nonumber\\& = \arctan(\arctan(\tan(\tan \gamma \pi/2)))\nonumber\\&= \gamma \pi/2.
		\end{align*}
		A geometrical observation reveals that $f \in \mathcal{SS}^*(\gamma)$ whenever $|z| < r(\gamma)$, where $r(\gamma)$ is given by~$\eqref{rgamma}$ and therefore the result is sharp.\\
		\noindent (v) Let $f \in \mathcal{S}_{l}^*$. Now, $f \in k-\mathcal{ST}$ whenever
		$$\RE(1-\log(1+\omega(z)))> k|\log(1+\omega(z))|\text{ } (z \in \mathbb{D}),$$
		for some Schwarz function $\omega$. Let $g(\omega(z)):=|\log(1+\omega(z))|=|\log|1+\omega(z)|+i \arg(1+\omega(z)|\text{, } \omega(z)= R e^{i t},$ where $R\leq |z|=r<1$ and $-\pi<t<\pi$. Now, consider \[g^2(R,t)=\left(\dfrac{1}{2}\log(1+R^2+2R \cos t)\right)^2+\left(\arctan\left(\dfrac{R \sin t}{1+R \cos t}\right)\right)^2,\]
		which upon partially differentiation with respect to $t$, yields
		\[h(R,t):=\dfrac{R \left(2  (R + \cos t) \arctan\dfrac{R \sin t}{1 + R \cos t} -
			\log(1 + R^2 + 2 R \cos t) \sin t\right)}{1 + R^2 + 2 R \cos t}.\]
		Then clearly, the function $h(R,t) \geq 0$ for $t \in [0,\pi)$ and $h(R,t)\leq 0$ for $t \in (-\pi,0]$. Thus \[\max_{-\pi<t<\pi}g(R,t)=\max\{g(R,-\pi),g(R,\pi)\},\] which yields
		\begin{equation}\label{1}|\log(1+\omega(z))| \leq |\log(1-R)| \leq |\log(1-r)|.\end{equation}
		The inequality~$\eqref{1}$ and Theorem~\ref{bounds} reveal that the result follows at once by showing \[1-\log(1+r) \geq k|\log(1-r)|,\] whenever $m(r,k):=1+r-e(1-r)^k \leq 0.$ 
		Clearly, $m(0,k)<0$ and $m(1,k)>0$ for fixed value of $k$. Thus, there must exist $r(k)$ such that $m(r,k)\leq 0$ for all $r \in [0,r(k)]$, where $r(k)$ is the smallest positive root of the equation~$\eqref{root1}$. Hence the result.
	\end{proof}
	Our next result involves the concept of majorization. For any two analytic functions $f$ and $g$, we say $f$ is majorized by $g$ in $\mathbb{D}$, denoted by $f$ $\ll$ $g,$ if there exists an analytic function $\mu(z)$ in $\mathbb{D}$, satisfying
	$$|\mu(z)| \leq 1 \text{ and } f(z)=\mu(z)g(z).$$
	The following majorization result involves the class $\mathcal{S}_{l}^*:$
	\begin{thm}\label{maj}
		Let $f \in \mathcal{A}$. Suppose that $f$ $\ll$ $g$ in $\mathbb{D}$, where $g \in \mathcal{S}_{l}^*$. Then
		\begin{equation*}
			|f'(z)| \leq |g'(z)|,\text{ for } |z|\leq \tilde{r},
		\end{equation*}
		where $\tilde{r}$ is the smallest positive root of the following equation:
		\begin{equation}\label{root}
			(1-r^2)(1-\log(1+r))-2r=0.
		\end{equation}
	\end{thm}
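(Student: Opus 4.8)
The plan is to start from the majorization relation $f(z)=\mu(z)g(z)$ and differentiate, obtaining $f'(z)=\mu'(z)g(z)+\mu(z)g'(z)$. Since $g\in\mathcal{S}_{l}^*$, the subordination $zg'(z)/g(z)\prec\psi(z)$ supplies a Schwarz function $\omega$ with $zg'(z)/g(z)=\psi(\omega(z))=1-\log(1+\omega(z))$, so that $g(z)=zg'(z)/\bigl(1-\log(1+\omega(z))\bigr)$. Substituting this into the expression for $f'$ and taking moduli yields the master inequality
\begin{equation*}
|f'(z)|\leq\left(\frac{|z|\,|\mu'(z)|}{|1-\log(1+\omega(z))|}+|\mu(z)|\right)|g'(z)|,
\end{equation*}
which I would then reduce to a one-variable estimate in $|\mu(z)|$.

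Next I would bound the two analytic ingredients separately. For the majorant $\mu$, the constraint $|\mu(z)|\leq1$ gives the Schwarz--Pick inequality $|\mu'(z)|\leq(1-|\mu(z)|^2)/(1-|z|^2)$. For the denominator, writing $|z|=r$ and using $|\omega(z)|\leq r$, the real part satisfies $\RE\bigl(1-\log(1+\omega(z))\bigr)=1-\log|1+\omega(z)|\geq 1-\log(1+r)>0$, whence $|1-\log(1+\omega(z))|\geq 1-\log(1+r)$; this is precisely the lower modulus bound already implicit in Theorem~\ref{bounds}. Writing $\rho=|\mu(z)|$, the master inequality collapses to
\begin{equation*}
|f'(z)|\leq\left(\frac{r(1-\rho^2)}{(1-r^2)(1-\log(1+r))}+\rho\right)|g'(z)|.
\end{equation*}

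It then remains to show that the bracketed factor does not exceed $1$ for every $\rho\in[0,1]$, and this is exactly where the radius restriction enters. Setting $c:=r/\bigl((1-r^2)(1-\log(1+r))\bigr)>0$, the factor is $\Psi(\rho)=\rho+c(1-\rho^2)$, a downward parabola in $\rho$ with $\Psi(1)=1$ and vertex at $\rho=1/(2c)$. If $1/(2c)\geq1$, i.e.\ $c\leq1/2$, then $\Psi$ is increasing on $[0,1]$ and attains its maximum $\Psi(1)=1$, giving $|f'(z)|\leq|g'(z)|$; if instead $c>1/2$, the maximum equals $c+1/(4c)>1$ and the estimate fails. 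Hence the decisive condition is $c\leq1/2$, equivalently $2r\leq(1-r^2)(1-\log(1+r))$, that is $(1-r^2)(1-\log(1+r))-2r\geq0$. Since the left-hand side equals $1$ at $r=0$ and tends to $-2$ as $r\to1^-$, it remains nonnegative exactly up to its smallest positive zero, which is the smallest positive root of~$\eqref{root}$.

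The main obstacle I anticipate is not the differentiation or the two standard estimates, but correctly isolating the sharp threshold: recognising that the extremal value of $\Psi$ over $\rho\in[0,1]$ switches behaviour precisely at $c=1/2$, and that this threshold coincides with the smallest positive root of~$\eqref{root}$. Some care is also needed at the endpoint $\rho=1$, where Schwarz--Pick forces $\mu'(z)=0$ and the first term drops out, so that $|f'(z)|\leq|g'(z)|$ holds trivially and the parabola analysis remains consistent.
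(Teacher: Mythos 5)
Your proposal is correct, and its skeleton coincides with the paper's proof: both start from $f=\mu g$, differentiate to get $f'=g'\bigl(\mu+\mu'\,g/g'\bigr)$, invoke the Schwarz--Pick bound $|\mu'(z)|\leq(1-|\mu(z)|^2)/(1-|z|^2)$, bound $|g/g'|$ via the subordination $zg'/g=1-\log(1+\omega(z))$, and finish by finding when the resulting factor in $\rho=|\mu(z)|$ stays below $1$, which yields the worst case at $\rho=1$ and hence the root of $(1-r^2)(1-\log(1+r))-2r=0$. Where you genuinely diverge is in the two technical sub-steps, and in both cases your version is cleaner. First, for the lower bound $|1-\log(1+\omega(z))|\geq 1-\log(1+r)$, the paper writes $\omega(z)=Re^{it}$ and carries out a calculus minimization of $|1-\log(1+Re^{it})|^2$ over the angle $t$ (computing $h_t(R,t)$ and analyzing its sign); your argument --- modulus dominates real part, $\RE\bigl(1-\log(1+\omega)\bigr)=1-\log|1+\omega|\geq1-\log(1+r)>0$ since $|1+\omega|\leq1+r$ and $\log 2<1$ --- gets the same inequality in one line and avoids the angular computation entirely. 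Second, for the threshold analysis, the paper sets $\eta(r,\zeta):=1-K(r,\zeta)$ and asserts it equals $(1-r^2)(1-\log(1+r))-r(1+\zeta)$, which is actually only a positive multiple of that quantity (one has $1-K=(1-\zeta)\bigl(1-c(1+\zeta)\bigr)$ with your $c$), so the paper's identity is sign-correct but not literal; your parabola-vertex analysis of $\Psi(\rho)=\rho+c(1-\rho^2)$, with the switch at $c=1/2$ and the observation that $\Psi(\rho^*)=c+1/(4c)>1$ when $c>1/2$, is both rigorous and explains why the threshold is sharp within this method. Your endpoint remark ($\rho=1$ forces $\mu$ constant, hence $\mu'\equiv0$) is also sound. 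In short: same route, but your two local arguments are more elementary and tighter than the paper's.
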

	\begin{proof}Since $g \in \mathcal{S}_{l}^*$, we have $\tfrac{z g'(z)}{g(z)} \prec 1-\log(1+z)$. Then there exists a Schwarz function $\omega(z)$ such that
		\begin{equation}\label{g}
			\dfrac{z g'(z)}{g(z)}=1-\log(1+\omega(z)).
		\end{equation}
		Let $\omega(z)= R e^{it}$, $R\leq|z|=r<1$ and $-\pi<t<\pi$. Consider the function 
		\begin{align*}
			h(R,t):=	|1-\log(1+R e^{it})|^2=\left(1-\dfrac{1}{2}\log(1+R^2+2R\cos t)\right)^2+\left(\arctan\dfrac{R \sin t}{1+R \cos t}\right)^2,
		\end{align*}
		which upon differentiation with respect to $t$ yields
		\begin{align*}h_t(R,t):=\dfrac{R \left(2 (R + \cos t) \arctan\dfrac{R \sin t}{1 + R\cos t}- (-2 +
				\log(1 + R^2 + 2 R \cos t) \sin  t\right)}{1 + R^2 + 2 R \cos t}.
		\end{align*}
		Then clearly, the function $h_t(R,t)\geq 0$ for $t \in [0,\pi)$ and  $h_t(R,t)\leq 0$ for $t \in (-\pi,0]
		$. Thus \[\min_{-\pi<t<\pi}h(R,t)=h(R,0),\] which yields
		\begin{equation*}
			|1-\log(1+\omega(z))| \geq 1-\log(1+R) \geq 1-\log(1+r).
		\end{equation*}
		Further, condition~$\eqref{g}$ yields
		\begin{equation}\label{log}
			\left|\dfrac{g(z)}{g'(z)}\right| =\dfrac{|z|}{|1-\log(1+\omega(z)|}\leq \dfrac{r}{1-\log(1+r)}.
		\end{equation}
		By the definition of majorization, we get $f(z)=\mu(z) g(z)$, which upon differentiation, gives
		\begin{equation}\label{f}
			f'(z)=\mu(z)g'(z)+g(z)\mu'(z)=g'(z)\left(\mu(z)+\mu'(z)\dfrac{g(z)}{g'(z)}\right).
		\end{equation}
		The function $\mu$ satisfies the inequality~$\eqref{8}$, thus using~$\eqref{8}$ for $\mu$ and substituting the inequality~$\eqref{log}$ in~$\eqref{f}$, we get
		\begin{equation*}
			|f'(z)| \leq K(r, \zeta)|g'(z)|,
		\end{equation*}
		where $K(r,\zeta)= \zeta+\tfrac{r (1-\zeta^2)}{(1-r^2)(1-\log(1+r))}$ for $|\mu(z)|=\zeta$ $(0\leq \zeta \leq 1)$. Let us choose \begin{equation*} \eta(r,\zeta):=1-K(r,\zeta)=(1-r^2)(1-\log(1+r))-r(1+\zeta).\end{equation*} For $\zeta=1$, $\eta(r,\zeta)$ attains its minimum value, which is given by \[\eta(r,1)=:\nu(r)=(1-r^2)(1-\log(1+r))-2r.\] Clearly, $\nu(0)=1>0$ and $\nu(1)=-2<0$. In view of these inequalities there must exist $\tilde{r}$ such that $\nu(r) \geq 0$ for all $r \in [0,\tilde{r}]$, where $\tilde{r}$ is the smallest positive root of the equation~\eqref{root}. Hence the proof is complete.
	\end{proof}
	\section{Inclusion relations}\label{sec3}
	In this section, we give inclusion relations between the classes $\mathcal{S}^*_{l}$ and various other subclasses of starlike functions, namely $\mathcal{S}^*(\alpha)$, $\mathcal{SS}^*(\gamma)$, $\mathcal{ST}(1,\alpha)$ and $\mathcal{S}^*(q_c)$.
	\begin{thm}
		The class $\mathcal{S}^*_{l}$ satisfies the following results:
		\begin{itemize}
			\item[(i)] $\mathcal{S}^*_{l} \subset \mathcal{S}^*(\alpha) \subset \mathcal{S}^*$ for $0 \leq \alpha \leq 1-\log 2$.
			\item[(ii)]$\mathcal{S}^*_{l} \subset \mathcal{SS}^*(\gamma) \subset \mathcal{S}^*$ for $2 \tilde{f}(\theta_0)/\pi \leq \gamma \leq 1$, where $\theta_0$ is the smallest positive root of the equation $-2 + \log(2 (1 + \cos \theta)) + \theta \tan \theta/2=0$ and $\tilde{f}(\theta) = \arg(1-\log(1+e^{i \theta}))$, $\theta \in [0,\pi)$.
			\item[(iii)]$\mathcal{S}^*_{l} \subset \mathcal{ST}(1,\alpha)$ for $\alpha \leq 1-2\log 2$.
			\item[(iv)]$\mathcal{S}^*(q_c) \subset \mathcal{S}^*_{l} \subset \mathcal{S}^*$ for $c \leq c_0$, where $c_0 = \log 2(2-\log 2)$.	
		\end{itemize}
		The above constants in each part is best possible. The pictorial representation of the result is depicted in the \textbf{Figure 1}.
	\end{thm}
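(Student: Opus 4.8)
The unifying principle behind all four parts is that membership $f\in\mathcal{S}^*_{l}$ is equivalent to $zf'(z)/f(z)$ taking its values in $\psi(\mathbb{D})$ with $\psi(z)=1-\log(1+z)$, so each inclusion reduces to a containment of plane regions: for the inclusions leaving $\mathcal{S}^*_{l}$ in parts (i)--(iii) I must show that $\psi(\mathbb{D})$ lies inside the value region defining the target class, whereas for the reverse inclusion in part (iv) I must show the region $q_c(\mathbb{D})$ lies inside $\psi(\mathbb{D})$. Throughout I would exploit the boundary parametrization coming from $1+e^{i\theta}=2\cos(\theta/2)\,e^{i\theta/2}$, which gives $\psi(e^{i\theta})=\bigl(1-\log(2\cos(\theta/2))\bigr)-i\,\theta/2$; equivalently $\psi(\mathbb{D})=\{x+iy:\ |y|<\pi/2,\ x>1-\log(2\cos y)\}$, so that the leftmost point of $\psi(\mathbb{D})$ is its vertex $1-\log 2$.

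For part (i) I would invoke Theorem~\ref{bounds} directly: it gives $\RE\bigl(zf'(z)/f(z)\bigr)\ge 1-\log(1+r)$ for $|z|=r<1$, and letting $r\to 1$ shows $\inf_{\mathbb{D}}\RE\psi=1-\log 2$. Hence $\psi(\mathbb{D})\subset\{\RE w>\alpha\}$ precisely when $\alpha\le 1-\log 2$, which is the inclusion $\mathcal{S}^*_{l}\subset\mathcal{S}^*(\alpha)$, while $\mathcal{S}^*(\alpha)\subset\mathcal{S}^*$ holds because $\alpha\ge 0$. Sharpness follows from the extremal function $f_0$ of~\eqref{strls}, along which $z_0f_0'(z_0)/f_0(z_0)\to 1-\log 2$.

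Part (ii) is the analytic heart of the statement. Here I must compute $\max_{\theta}|\tilde f(\theta)|$ for $\tilde f(\theta)=\arg\psi(e^{i\theta})=\arctan\!\bigl(\tfrac{-\theta/2}{\,1-\log(2\cos(\theta/2))\,}\bigr)$, noting first that the real part $1-\log(2\cos(\theta/2))\ge 1-\log 2>0$, so $\psi$ stays in the right half-plane and the arctangent is unambiguous. Differentiating and clearing the positive denominator, the condition $\tilde f'(\theta)=0$ becomes $-2+\log\bigl(2(1+\cos\theta)\bigr)+\theta\tan(\theta/2)=0$, using $4\cos^2(\theta/2)=2(1+\cos\theta)$, whose smallest positive root is exactly the stated $\theta_0$; consequently $\max|\arg\psi|=\tilde f(\theta_0)$, forcing $\gamma\ge 2\tilde f(\theta_0)/\pi$, while $\gamma\le 1$ ensures $\mathcal{SS}^*(\gamma)\subset\mathcal{S}^*$. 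I expect the principal obstacle to lie here: verifying that this interior critical point furnishes the \emph{global} maximum, i.e. checking the sign of $\tilde f'$ on $(0,\theta_0)$ and on $(\theta_0,\pi)$ and confirming that the supremum is genuinely attained at $\theta_0$ rather than approached as $\theta\to\pi$.

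For parts (iii) and (iv) I would run a boundary-curve comparison built on the strip description of $\psi(\mathbb{D})$ above. In (iii), with target parabolic region $\{w:\RE w>|w-1|+\alpha\}$, testing the vertex $1-\log 2$ gives $1-\log 2>\log 2+\alpha$, i.e. $\alpha\le 1-2\log 2$. In (iv), writing $q_c(\mathbb{D})=\{w:|w^2-1|<c\}$ whose leftmost point on the real axis is $\sqrt{1-c}$, the requirement $\sqrt{1-c}\ge 1-\log 2$ yields $c\le 2\log 2-(\log 2)^2=\log 2\,(2-\log 2)=c_0$. In both cases the delicate point is proving that the binding comparison really occurs on the real axis: for (iii) that the two parabola-type boundary curves do not meet off-axis, and for (iv) that $g(x,y)=x+\log(2\cos y)-1$ attains its minimum over the lens $|w^2-1|=c$ at $y=0$. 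I would settle each by a one-variable monotonicity argument on the respective boundary parametrization, after which the best-possibility of every constant follows by exhibiting the boundary point of $\psi(\mathbb{D})$ (for (iii)) or of $q_{c_0}(\mathbb{D})$ (for (iv)) that touches the vertex $1-\log 2$.
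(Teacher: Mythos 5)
Your proposal is correct and follows essentially the same route as the paper: both reduce each inclusion to a containment of image regions and analyze the boundary of $\psi(\mathbb{D})$, obtaining in (i) the infimum $1-\log 2$ of the real part from Theorem~\ref{bounds}, in (ii) exactly the paper's critical-point equation $-2+\log\bigl(2(1+\cos\theta)\bigr)+\theta\tan(\theta/2)=0$ for $\max|\arg\psi|$, and in (iii)--(iv) the same on-axis comparisons yielding $1-2\log 2$ and $c_0=\log 2\,(2-\log 2)$. The only differences are ones of bookkeeping: where you propose one-variable monotonicity arguments to certify that the extrema in (ii)--(iv) occur at the claimed boundary points, the paper instead asserts $\tilde f''(\theta)<0$, computes $\min_\theta h(\theta)=h(0)$ numerically, and handles (iv) by comparing real and imaginary part bounds of $\sqrt{1+cz}$ against those of Theorem~\ref{bounds}.
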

	\begin{figure}[ht]
		\begin{minipage}{0.55\textwidth}
			\includegraphics[width=0.95\linewidth, height=8cm]{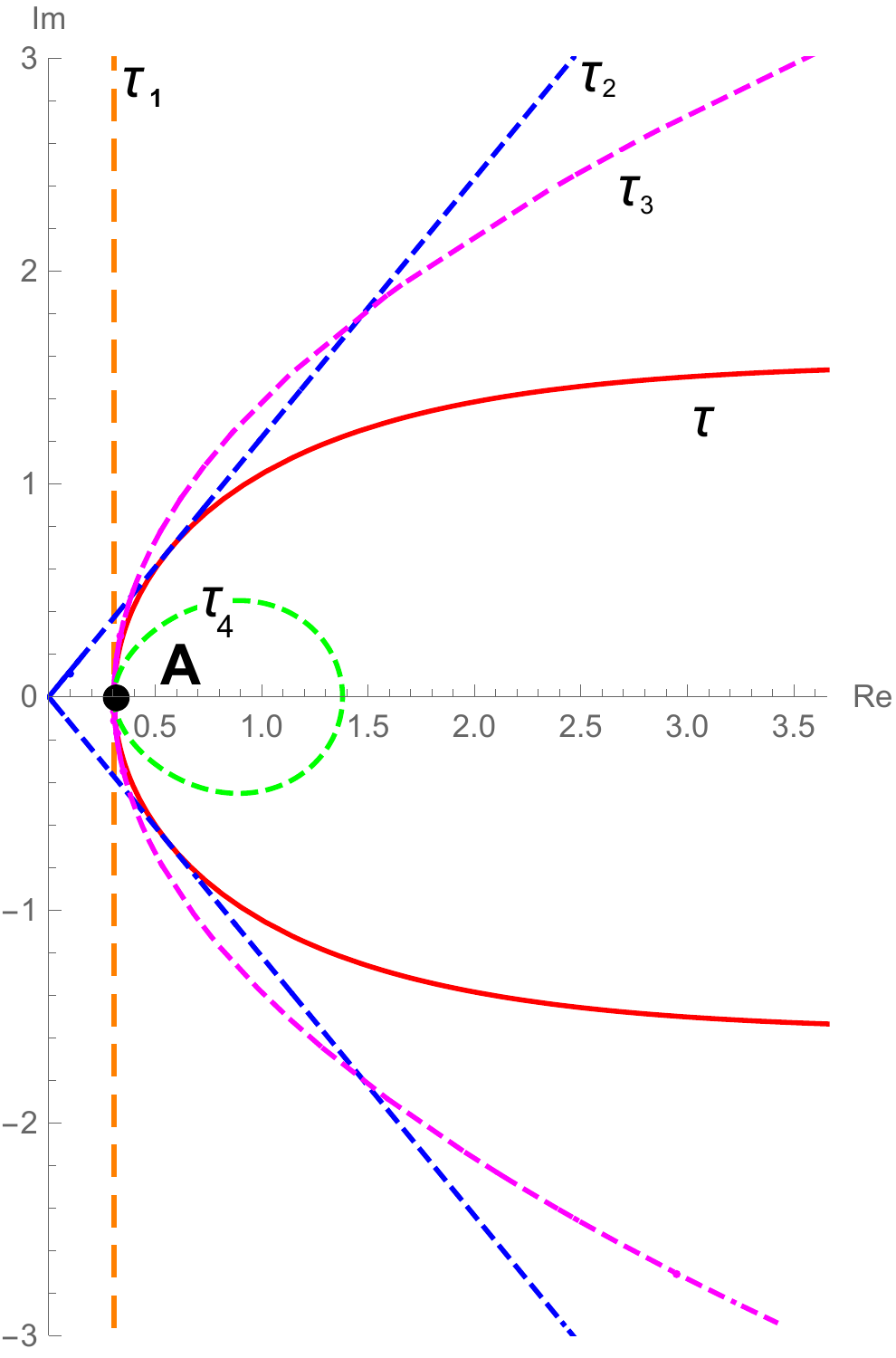}
		\end{minipage}%
		\begin{minipage}{0.55\textwidth}
			\parbox{0.9\linewidth}{\large
				$\tau:|\exp(1-w)-1|=1$ \\ \ \\
				$\tau_1:\RE w=1-\log 2$\\ \ \\$\tau_2:|\arg w|=\dfrac{7029}{12500}\dfrac{\pi}{2}$\\ \ \\$\tau_3: \RE w-|w-1|=1-2\log 2$\\ \ \\$\tau_4:|w^2(z)-1|=\log 2(2-\log 2)$\\ \ \\ $A=1-\log 2$
			}
		\end{minipage}
		\caption{Boundary curves of best dominants and subordinant of $\psi(z)=1-\log(1+z)$.}
		\label{fig:incl_rel}
	\end{figure}
	\begin{proof}
		(i) Since $f \in \mathcal{S}^*_{l}$, we have $z f'(z)/f(z) \prec 1-\log(1+z)$. Theorem~\ref{bounds} yields the following:
		\begin{equation*}
			1-\log 2=\min_{|z|=1} \RE (1-\log(1+z)) < \RE \dfrac{z f'(z)}{f(z)}.
		\end{equation*}
		Hence, the result follows.\\
		\noindent (ii) Let $f \in \mathcal{S}^*_{l}$. Thus, we have
		\begin{align*}
			\left|\arg \dfrac{z f'(z)}{f(z)}\right| &< \max_{|z|=1} |\arg (1-\log(1+z))|\\&= \max_{-\pi\leq\theta\leq\pi} \left|\arctan\left(\dfrac{-\theta}{2-\log(2(1+\cos \theta))}\right)\right|\\&=: \max_{-\pi\leq\theta\leq\pi} |\tilde{f}(e^{i \theta})|.
		\end{align*}
		Due to the symmetricity of the function $\tilde{f}(\theta)$, we consider $\theta\in [0,\pi]$ and
		$\tilde{f}'(\theta)=0$ yields $-2 + \log(2 (1 + \cos(\theta_0))) + \theta_0 \tan(\theta_0/2)=0$, where $\theta_0 \approx 1.37502$. A calculation shows that $\tilde{f}''(\theta) <0$, which implies $\max_{0\leq\theta\leq\pi}\tilde{f}( \theta)=\tilde{f}(\theta_0)\approx 0.88329$. Thus $f\in \mathcal{SS}^*(\gamma)$, for the given range of $\gamma$.\\
		\noindent (iii) Let us consider the domain $\Omega_{\alpha}:= \{w \in \mathbb{C}: \RE w > |w-1|+\alpha\}$, whose boundary represents a parabola, for $w=x+i y$, given by:
		\begin{equation*}
			x= \dfrac{y^2}{2(1-\alpha)}+\dfrac{1+\alpha}{2},
		\end{equation*}
		whose vertex is given by: $((1+\alpha)/2,0)$. In order to prove the result, it suffices to show
		\begin{align*}
			h(\theta)&:= \RE (1-\log(1+z)) -|\log(1+z)| \\&~=1-\dfrac{1}{2}\log(2(1+\cos \theta)) -\sqrt{\dfrac{1}{4}\log^2(2(1+\cos \theta))+\dfrac{\theta^2}{4}}> \alpha,
		\end{align*}
		for $z=e^{i \theta}$. A numerical computation shows that $\min_{-\pi \leq \theta \leq \pi} h(\theta) = h(0)=1-2 \log 2$. Hence, the result.\\
		\noindent (iv) Since $f \in \mathcal{S}^*(q_c)$, we have $z f'(z)/f(z) \prec \sqrt{1+cz}$ and
		\[\sqrt{1-c}= \min_{|z|=1} \sqrt{1+cz} < \RE \dfrac{z f'(z)}{f(z)} < \max_{|z|=1} \sqrt{1+cz} =\sqrt{1+c}.\] Similar analysis can be carried out for the imaginary part bounds and therefore by using Theorem~\ref{bounds}, we get the result.
	\end{proof}
\section{Coefficient bounds}
This section, deals with various coefficient related bound estimates. Here we need the function $H(q_1,q_2)$, given in \cite[Lemma~3]{pokhrov}, to establish our results in what follows. Evidently the class $\mathcal{M}_{g,h}(\phi)$ unifies various subclasses of $\mathcal{S}$ for different choices of $g$ and $h$. A few of the same are enlisted below for ready reference:
\begin{align}\label{convo}
	\dfrac{(f*g)(z)}{(f*h)(z)} = \begin{cases}
		\dfrac{z f'(z)}{f(z)}, & g(z)=\dfrac{z}{(1-z)^2}\text{, }h(z)= \dfrac{z}{1-z};\\ \ \\ \dfrac{zf'(z)+\alpha z^2f''(z)}{f(z)}, & g(z)=\dfrac{z(1+(2\alpha-1)z)}{(1-z)^3}\text{, } h(z)=\dfrac{z}{1-z};\\ \ \\ \dfrac{(zf'(z)+\alpha z^2f''(z))'}{f'(z)}, & g(z)=\dfrac{z(1-z^2 +2\alpha z (2 + z))}{(1 -z)^4}\text{, } h(z)=\dfrac{z}{(1-z)^2};\\ \dfrac{2 zf'(z)}{f(z)-f(-z)}, & g(z)=\dfrac{z}{(1-z)^2}\text{, } h(z)=\dfrac{z}{(1-z^2)};\\ \ \\ \dfrac{(2zf'(z))'}{(f(z)-f(-z))'}, &g(z)=\dfrac{z(1+z)}{(1-z)^3}\text{, } h(z)=\dfrac{z(1+z^2)}{(1-z^2)^2};\\ \ \\ \dfrac{zf'(z)+\alpha z^2f''(z)}{\alpha zf'(z)+(1-\alpha)f(z)}, &g(z)=\dfrac{z(1+(2\alpha-1)z)}{(1-z)^3} \text{, } h(z)=\dfrac{z(1+(\alpha-1)z)}{(1-z)^2}. \end{cases} \end{align}
\begin{rem}
	Murugusundaramoorthy~et~al. \cite[Theorem~6.1]{murg} obtained the result of Fekete-Szeg\"o functional bound for  functions in the class $\mathcal{M}_{g,h}(\phi)$. Since $\mathcal{M}_{g,h}(\Phi(z))$ $=\mathcal{M}_{g,h}(\phi(z))$, we state below the parallel result for functions in the class $\mathcal{M}_{g,h}(\Phi(z))$ by simply replacing each $B_i$ by $(-1)^iC_i$, the result is needed to prove our subsequent example.
\end{rem}
\begin{thm}\label{fekete}
	For $f \in \mathcal{M}_{g,h}(\Phi)$, we have
	\begin{align*}
		|a_3 - t a_2^2| \leq \begin{cases}
			\dfrac{C_2}{g_3-h_3}-\dfrac{t C_1^2}{(g_2-h_2)^2}+\dfrac{(g_2 h_2-h_2^2)C_1^2}{(g_3-h_3)(g_2-h_2)^2}, & t\leq \kappa_1;\\
			\dfrac{-C_1}{g_3-h_3}, & \kappa_1 \leq t \leq \kappa_2;\\
			\dfrac{-C_2}{g_3-h_3}+\dfrac{t C_1^2}{(g_2-h_2)^2}-\dfrac{(g_2 h_2-h_2^2) C_1^2}{(g_3-h_3)(g_2-h_2)^2}, & t\geq \kappa_2,
		\end{cases}
	\end{align*}
	where
	\begin{equation*}
		\kappa_1 = \dfrac{(g_2-h_2)^2(C_2+C_1)+h_2(g_2-h_2)C_1^2}{(g_3-h_3)C_1^2} \text{ and } \kappa_2 = \dfrac{(g_2-h_2)^2(C_2-C_1)+h_2(g_2-h_2)C_1^2}{(g_3-h_3)C_1^2}.
	\end{equation*}
\end{thm}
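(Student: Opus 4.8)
The plan is to leverage the class identity $\mathcal{M}_{g,h}(\Phi) = \mathcal{M}_{g,h}(\phi)$ recorded in the preceding remark, so that the desired estimate becomes a transcription of the known Fekete-Szeg\"o bound of Murugusundaramoorthy et~al.~\cite{murg} rather than a computation carried out afresh. First I would justify the class identity precisely: since $\Phi(z) = \phi(-z)$ for counterparts $\phi \in \mathscr{M}$ and $\Phi \in \mathscr{M}^\circ$, the two functions share the image $\Phi(\mathbb{D}) = \phi(\mathbb{D})$ and both take the value $1$ at the origin; as each is univalent, the subordination criterion ($F \prec \phi$ iff $F(0)=1$ and $F(\mathbb{D}) \subseteq \phi(\mathbb{D})$) shows that $(f*g)/(f*h) \prec \Phi$ holds exactly when $(f*g)/(f*h) \prec \phi$. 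Hence the two classes coincide and the bound for one is the bound for the other.

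Next I would make the coefficient dictionary explicit. Expanding $\Phi(z) = \phi(-z) = 1 - B_1 z + B_2 z^2 - B_3 z^3 + \cdots$ and comparing with $\Phi(z) = 1 + C_1 z + C_2 z^2 + \cdots$ yields $C_n = (-1)^n B_n$, equivalently $B_n = (-1)^n C_n$; in particular $B_1 = -C_1 > 0$ (consistent with $B_1 > 0$ and $C_1 < 0$) and $B_2 = C_2$. I would then quote the three-branch estimate of \cite{murg} written in the $B_i$ variables and perform the substitution $B_1 \mapsto -C_1$, $B_2 \mapsto C_2$ throughout. The central branch $B_1/(g_3-h_3)$ passes to $-C_1/(g_3-h_3)$, the outer branches acquire their displayed forms, and the breakpoints transform via $B_2 - B_1 \mapsto C_2 + C_1$ and $B_2 + B_1 \mapsto C_2 - C_1$, which are precisely the numerators appearing in $\kappa_1$ and $\kappa_2$.

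The step requiring genuine care---the main obstacle---is checking that the three-case structure survives the substitution, that is, that the ordering of the breakpoints is preserved. Since $\kappa_2 - \kappa_1$ is proportional to $(C_2 - C_1) - (C_2 + C_1) = -2C_1$, and $C_1 < 0$ forces $-2C_1 > 0$, we retain $\kappa_1 < \kappa_2$, so the middle interval is nondegenerate and the branches match the original ones branch-for-branch; one must also confirm that each right-hand side stays nonnegative under the sign flip, which again follows from $B_1 = -C_1 > 0$.

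As an alternative self-contained route, one could bypass \cite{murg} entirely: write the subordination as $(f*g)(z)/(f*h)(z) = \Phi(\omega(z))$ for a Schwarz function $\omega$, pass to a Carath\'eodory function $p = (1+\omega)/(1-\omega) = 1 + p_1 z + \cdots \in \mathcal{P}$, and solve for $a_2, a_3$ from the first two coefficients of the ratio, namely $(g_2-h_2)a_2$ and $(g_3-h_3)a_3 - h_2(g_2-h_2)a_2^2$, matched against those of $\Phi(\omega(z))$. One then reduces $a_3 - t a_2^2$ to a multiple of $p_2 - v p_1^2$ for an explicit affine $v = v(t)$ and applies Lemma~\ref{p1p2}, the thresholds $v=0$ and $v=1$ translating back into $\kappa_1$ and $\kappa_2$. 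The algebra needed to identify $v$ and read off the thresholds is the laborious part of this approach, which is exactly why the transcription argument via the remark is preferable.
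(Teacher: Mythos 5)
Your proposal follows essentially the same route as the paper: the paper gives no independent computation, but justifies Theorem~\ref{fekete} in the remark immediately preceding it, via the identity $\mathcal{M}_{g,h}(\Phi)=\mathcal{M}_{g,h}(\phi)$ and the substitution $B_i \mapsto (-1)^i C_i$ in the Fekete-Szeg\"o bound of \cite{murg}. Your justification of the class identity through univalence and equality of image domains, the dictionary $B_1=-C_1$, $B_2=C_2$, the breakpoint transformations $B_2-B_1 \mapsto C_2+C_1$ and $B_2+B_1 \mapsto C_2-C_1$, and the verification that $\kappa_1<\kappa_2$ survives the sign flip (using $C_1<0$ and $g_n-h_n>0$) are all correct and in fact more careful than what the paper writes down.

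One caveat: as the paper points out in the remark \emph{following} the theorem, the middle case of \cite[Theorem~6.1]{murg} is misstated in that source as $B_1/(2(g_3-h_3))$, so quoting it verbatim, as your plan literally prescribes, would produce $-C_1/(2(g_3-h_3))$ rather than the stated $-C_1/(g_3-h_3)$. Your transcription silently assumes the corrected central branch. To close this, one needs precisely your alternative self-contained route: writing $a_3-ta_2^2=\tfrac{B_1}{2(g_3-h_3)}\left(p_2-vp_1^2\right)$ for an affine $v=v(t)$ and applying Lemma~\ref{p1p2}, whose middle case $|p_2-vp_1^2|\leq 2$ supplies the factor of $2$ and confirms that $-C_1/(g_3-h_3)$ is the correct bound, the error lying in \cite{murg} rather than in the theorem.
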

The result is sharp whenever $f$ satisfies:
\begin{align*}
	\dfrac{(f*g)(z)}{(f*h)(z)} =\begin{cases}
		\Phi(z), & t< \kappa_1\text{ or } t>\kappa_2;\\
		\Phi(z^2), & \kappa_1 < t < \kappa_2;\\
		\Phi(\Psi(z)), & t=\kappa_1;\\
		\Phi(-\Psi(z)), & t=\kappa_2,
	\end{cases}
\end{align*}
where $\Psi(z)=\dfrac{z(z+\eta)}{1+\eta z}\quad (0\leq \eta \leq 1)$.
\begin{rem}
	We notice that the bound of Fekete-Szeg\"o stated in \cite[Theorem 6.1]{murg}, namely $|a_3 -\mu a_2^2| \leq B_1/2(g_3-h_3) \text{ when } \sigma_1\leq \mu \leq \sigma_2$, is incorrect and should be $|a_3 -\mu a_2^2| \leq B_1/(g_3-h_3)$, which is appropriately corrected in  Theorem~\ref{fekete}.
\end{rem}
In the following example, we establish a Fekete-Szeg\"o result for the class $\mathcal{S}_{l}(\alpha)$:
\begin{example}\label{feke}
	Let $f \in \mathcal{S}_{l}(\alpha)$. Then
	\begin{equation*}
		|a_3-t a_2^2|\leq \begin{cases}\dfrac{3}{4(1+2\alpha)}-\dfrac{t}{(1+\alpha)^2}, & t \leq \dfrac{(1+\alpha)^2}{4(1+2\alpha)}=:\kappa_1;\\
			\dfrac{1}{2(1+2\alpha)}, &\dfrac{(1+\alpha)^2}{4(1+2\alpha)}\leq t\leq \dfrac{5(1+\alpha)^2}{4(1+2\alpha)};\\
			\dfrac{t}{(1+\alpha)^2}-\dfrac{3}{4(1+2\alpha)}, &t \geq\dfrac{5(1+\alpha)^2}{4(1+2\alpha)}=:\kappa_2.\end{cases}
	\end{equation*}
	The result is sharp.
\end{example}
\begin{proof}
	Since $f \in\mathcal{S}_{l}(\alpha)= \mathcal{M}_{\alpha}(\psi(z))$, we have $ C_1=-1$, $C_2=1/2$ and $C_3=-1/3$. The result follows from Theorem~\ref{fekete} by substituting the values of $g_i's$ and $h_i's$ from~$\eqref{gi}$. Equality holds whenever $f$ satisfies:
	\begin{align*}
		\dfrac{(f*g)(z)}{(f*h)(z)} =\begin{cases}
			1-\log(1+z), & t< \kappa_1\text{ or } t>\kappa_2;\\
			1-\log(1+z^2), & \kappa_1 < t < \kappa_2;\\
			1-\log(1+\tfrac{z(z+\eta)}{1+\eta z})), & t=\kappa_1;\\
			1-\log(1-\tfrac{z(z+\eta)}{1+\eta z}), & t=\kappa_2.
		\end{cases}
	\end{align*}
\end{proof}
\begin{example}\label{exfek}
	Let $f \in \mathcal{S}_{l}(\alpha)$. Then
	\begin{align*}\label{fek}
		\text{(i)} \quad |a_3-a_2^2|\leq \dfrac{1}{2(1+2\alpha)},\quad \text{(ii)}\quad  |a_3| \leq \dfrac{3}{4(1+2\alpha)}.
	\end{align*}
	These results are sharp.
\end{example}
The proof directly follows from Example~\ref{feke}.
\begin{thm}\label{sec}
	Let $f \in \mathcal{M}_{g,h}(\phi)$ and either \begin{equation}\label{l1} (g_3-h_3)^2\leq L \quad \text{ or }\quad L <	(g_3-h_3)^2 \leq 2L, \end{equation}
	where $L=(g_2-h_2)(g_4-h_4)$, then
	\begin{itemize}
		\item[\emph{(1)}]	\begin{equation*}
			\text{ }|a_2 a_4-a_3^2| \leq \dfrac{B_1^2}{(g_3-h_3)^2},
		\end{equation*}
		whenever  $B_1$, $M$ and $T$ satisfy the conditions
		\begin{equation}\label{m}
			|M|-B_1^2(g_2-h_2)^4(g_4-h_4) \leq 0 \text{ and } |T|+ B_1(g_3-h_3)^2(g_2-h_2)-2B_1(g_2-h_2)^2(g_4-h_4) \leq 0.\end{equation}
		\item[\emph{(2)}] 	\begin{equation*}
			|a_2 a_4-a_3^2| \leq \dfrac{|M|}{(g_2-h_2)^4(g_3-h_3)^2(g_4-h_4)},
		\end{equation*}
		whenever  $B_1$, $M$ and $T$ satisfy the conditions
		\begin{equation*}
			| T|+ B_1(g_3-h_3)^2(g_2-h_2)-2B_1(g_2-h_2)^2(g_4-h_4)\geq 0
		\end{equation*}and \begin{equation*}2|M|-B_1|T|(g_2-h_2)^2-B_1^2(g_3-h_3)^2(g_2-h_2)^3 \geq 0\end{equation*}or \begin{equation*}
			|T|+ B_1(g_3-h_3)^2(g_2-h_2)-2B_1(g_2-h_2)^2(g_4-h_4)\leq 0\end{equation*}and \begin{equation*}|M|-B_1^2(g_2-h_2)^4(g_4-h_4) \geq 0.  \end{equation*}
		\item[\emph{(3)}] 	\begin{align*}
			|a_2 a_4-a_3^2|\leq &  -\dfrac{1}{(|M|-B_1|T|(g_2-h_2)^2-B_1^2(g_3-h_3)^2(g_2-h_2)^3+B_1^2(g_2-h_2)^4(g_4-h_4))}\times\\&\dfrac{B_1^2(|T|+B_1(g_3-h_3)^2(g_2-h_2)-2 B_1 (g_2-h_2)^2(g_4-h_4))^2}{4(g_3-h_3)^2(g_4-h_4)}+\dfrac{B_1^2}{(g_3-h_3)^2},
		\end{align*}
		whenever  $B_1$, $M$ and $T$ satisfy the conditions
		\begin{equation}\label{3}
			|T|+ B_1(g_3-h_3)^2(g_2-h_2)-2B_1(g_2-h_2)^2(g_4-h_4)> 0
		\end{equation}
		and\begin{equation}\label{mt}
			2|M|-B_1|T|(g_2-h_2)^2-B_1^2(g_3-h_3)^2(g_2-h_2)^3 \leq 0,
	\end{equation}\end{itemize}
	where \begin{align}\label{M} M&=~B_1^4\Bigg(-h_2^2(g_2-h_2)^2(g_4-h_4)+(g_3-h_3)\bigg(g_2g_3h_2^2-g_3h_2^3+g_2^2h_2h_3-3g_2h_2^2h_3+2h_2^3h_3\nonumber\\&\quad +(g_3-h_3)(-g_2h_2^2+h_2^3)\bigg)\Bigg)-B_2^2(g_2-h_2)^4(g_4-h_4)+B_1B_3(g_3-h_3)^2(g_2-h_2)^3\nonumber \\&\quad+B_1^2B_2\bigg((g_3-h_3)(g_2-h_2)^2\Big(g_3h_2+g_2h_3-2h_2h_3-2h_2(g_2-h_2)(g_4-h_4)\Big)\bigg)\end{align} and
	\begin{align}\label{T}
		T=~&2B_2(g_2-h_2)^2(g_4-h_4)+2B_1^2h_2(g_2-h_2)(g_4-h_4)-B_1^2g_3h_2(g_3-h_3)\nonumber \\&-B_1^2g_2h_3(g_3-h_3)+2B_1^2h_2h_3(g_3-h_3)-2B_2(g_3-h_3)^2(g_2-h_2).
	\end{align}
\end{thm}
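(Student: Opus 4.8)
The plan is to reduce the whole problem to the Carath\'{e}odory coefficients of an associated function in $\mathcal{P}$ and then carry out a two-variable extremal analysis. First I would use the subordination defining $\mathcal{M}_{g,h}(\phi)$ to write $(f*g)(z)/(f*h)(z)=\phi(\omega(z))$ for a Schwarz function $\omega$, and substitute $\omega=(p-1)/(p+1)$ with $p(z)=1+p_1z+p_2z^2+\cdots\in\mathcal{P}$. Expanding $\phi(\omega(z))=1+d_1z+d_2z^2+d_3z^3+\cdots$, each $d_n$ becomes an explicit polynomial in $B_1,B_2,B_3$ and $p_1,p_2,p_3$. Comparing coefficients of $z^2,z^3,z^4$ in $(f*g)=(f*h)\,\phi(\omega)$ gives $(g_2-h_2)a_2=d_1$, $(g_3-h_3)a_3=d_2+h_2a_2d_1$ and $(g_4-h_4)a_4=d_3+h_2a_2d_2+h_3a_3d_1$, from which $a_2,a_3,a_4$ are solved explicitly in terms of $p_1,p_2,p_3$ and the data $g_i,h_i,B_i$.

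Next I would form $a_2a_4-a_3^2$ and apply Lemma~\ref{p1p2p3} to express $p_2$ and $p_3$ through $c:=p_1\in[0,2]$ (taken real by the rotation invariance of $\mathcal{P}$) and parameters $x,y$ with $|x|\le1$, $|y|\le1$. After the triangle inequality and the estimate $|y|\le1$, the determinant is majorized by a function $F(c,t)$, with $t:=|x|\in[0,1]$, that is quadratic in $t$ and whose coefficients are polynomials in $c$ and the data. The hypotheses~\eqref{l1}, comparing $(g_3-h_3)^2$ with $L=(g_2-h_2)(g_4-h_4)$, are precisely what fix the signs of the $c$-dependent coefficients, so that the maximum of $F$ over the square can be located; the quantities $M$ and $T$ of~\eqref{M} and~\eqref{T} then emerge as the collected coefficients of this majorant once $c$ is set to its extremal value.

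Finally I would maximize $F(c,t)$ over $t\in[0,1]$, a quadratic that attains its maximum at an endpoint or at its vertex, and the three alternatives yield the three cases. The configuration $c=0$ gives the clean bound $B_1^2/(g_3-h_3)^2$ of part~(1); indeed one checks directly that $c=0$ forces $a_2=0$ and $a_3=B_1p_2/2(g_3-h_3)$, so that $|a_2a_4-a_3^2|=B_1^2|x|^2/(g_3-h_3)^2\le B_1^2/(g_3-h_3)^2$. The opposite endpoint produces the bound $|M|/((g_2-h_2)^4(g_3-h_3)^2(g_4-h_4))$ of part~(2), while the interior critical point produces the bound of part~(3), whose shape — the value $B_1^2/(g_3-h_3)^2$ adjusted by a term proportional to $(|T|+\cdots)^2$ — is exactly the value of the quadratic at its vertex. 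The sign inequalities~\eqref{m}, \eqref{3} and~\eqref{mt} are the conditions that the derivative of $F$ is $\le0$ at the relevant endpoint, is $\ge0$ there, and that the vertex lies in $(0,1)$ under the governing concavity condition, and these select each case in turn.

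I expect the main obstacle to be twofold. The first is the heavy but essentially routine algebra needed to collect $a_2a_4-a_3^2$ into the exact form controlled by $M$ and $T$. The second, and more delicate, is the two-variable extremal analysis: one must verify, separately in each sub-case of~\eqref{l1}, that the majorant $F(c,t)$ is genuinely maximized at the claimed location of $c$, so that the problem collapses to the one-variable quadratic-in-$t$ maximization. Care is also required to justify the reduction to real $c\ge0$ by a rotation and to match the discriminant and monotonicity conditions with~\eqref{m}, \eqref{3} and~\eqref{mt}.
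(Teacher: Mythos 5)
Your proposal follows essentially the same route as the paper's own proof: the identical subordination-to-$\mathcal{P}$ reduction giving $a_2,a_3,a_4$ in terms of $p_1,p_2,p_3$, the same use of Lemma~\ref{p1p2p3} to parametrize $p_2,p_3$ by $p_1\in[0,2]$ and $x,y$, the same triangle-inequality majorant in which $M$ and $T$ of \eqref{M}--\eqref{T} appear, hypothesis \eqref{l1} playing exactly the role you assign it (fixing signs so the maximum in $|x|$ sits at $|x|=1$), and the final three-case maximization of a quadratic in $p_1^2$ over $[0,4]$, with $p_1=0$, $p_1=2$ and the interior vertex yielding parts (1), (2), (3) precisely as you attribute them. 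The only caution is ordering: in the paper one maximizes over $\rho=|x|$ first (under \eqref{l1} the majorant $G(p,\rho)$ is increasing in $\rho$ for every fixed $p$), and the three cases then come from the subsequent one-variable maximization over $p$ via \eqref{ABC1} — not from the $t$-maximization as one of your sentences suggests — but since your explicit case identifications match this, the discrepancy is one of wording rather than substance.
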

\begin{proof}
	The series expansion of the functions $f$, $g$ and $h$ yields
	\begin{equation}\label{gh}
		\dfrac{(f*g)(z)}{(f*h)(z)}=1+ a_2(g_2-h_2)z+(a_3(g_3-h_3)+a_2^2h_2(h_2-g_2))z^2+\cdots.
	\end{equation}
	Here, we define a function $p$ in $\mathcal{P}$ as follows:
	\begin{equation}\label{omega}
		p(z)= \dfrac{1+\omega(z)}{1-\omega(z)} = 1+p_1z+p_z z^2+\cdots.	
	\end{equation}
	Then, we have $\omega(z)=\tfrac{p(z)-1}{p(z)+1}.$
	Clearly, $\omega$ is a Schwarz function. Since $(f*g)(z)/(f*h)(z) \prec \phi(z)$, we get \begin{equation}\label{main} \dfrac{(f*g)(z)}{(f*h)(z)} = \phi(\omega(z)).\end{equation} Now, using~$\eqref{gh}$,~$\eqref{phi}$ and expression of $\omega$ in terms of $p$ in $\eqref{main}$, we get
	\begin{equation}\label{a2a3}
		a_2 = \dfrac{B_1 p_1}{2(g_2-h_2)}, \text{ } a_3 =\dfrac{B_2 p_1^2(g_2-h_2)-B_1(p_1^2-2p_2)(g_2-h_2)+B_1^2p_1^2h_2}{4 (g_2-h_2)(g_3 - h_3)}
	\end{equation}
	and
	\begin{align}\label{a4}
		a_4=~&\dfrac{1}{8 (g_2 - h_2) (g_3 - h_3) (g_4 -
			h_4)}\Bigg(p_1 (-2 B_2 p_1^2 + B_3 p_1^2 + 4 B_2 p_2) (g_2 - h_2)(g_3 - h_3)\nonumber \\&+B_1^3 p_1^3 h_2 h_3 - B_1^2 p_1 (p_1^2 - 2 p_2) \Big(g_3 h_2 + (g_2 - 2 h_2) h_3\Big) + B_1 \bigg(p_1^3 \Big(g_2 (g_3 + (B_2-1) h_3)\nonumber \\& +
		h_2 \big((B_2-1) g_3 + h_3 - 2 B_2 h_3\big)\Big)-4 p_1 p_2 (g_2 - h_2) (g_3 - h_3) + 4 p_3 (g_2 - h_2)(g_3 - h_3)\bigg)\Bigg).
	\end{align}
	We assume  $p_1=:p \in [0,2]$ and upon substituting the values of $p_2$ and $p_3$, given in  Lemma~\ref{p1p2p3}, in the expression $a_2 a_4-a_3^2$, we get
	\begin{align*}
		a_2a_4-a_3^2:=&\dfrac{1}{16(g_2-h_2)^4(g_3-h_3)^2(g_4-h_4)}\bigg(p^4 M-p^2x(4-p^2)(g_2-h_2)^2B_1T-(4-p^2)^2x^2\nonumber \\&\quad B_1^2(g_2-h_2)^4(g_4-h_4)-p^2x^2(4-p^2)B_1^2(g_3-h_3)^2(g_2-h_2)^3\nonumber \\&+2B_1^2p(4-p^2)(g_3-h_3)^2(g_2-h_2)^3y(1-|x|^2)\bigg),
	\end{align*}
	where $M$ and $T$ are given in~$\eqref{M}$ and~$\eqref{T}$, respectively.
	Applying triangular inequality in the above equation with the assumption that $|x|=\rho$, we get
	\begin{align*}
		|a_2a_4-a_3^2|\leq& \dfrac{1}{16 (g_2-h_2)^4(g_3-h_3)^2(g_4-h_4)}\bigg(|M|p^4+p^2(4-p^2)\rho^2B_1^2(g_3-h_3)^2(g_2-h_2)^3)\nonumber\\&+B_1|T| p^2 \rho (4-p^2) ~(g_2-h_2)^2+2B_1^2(g_3-h_3)^2(g_2-h_2)^3p(4-p^2)(1-\rho^2)\nonumber\\&+B_1^2(g_2-h_2)^4(g_4-h_4)(4-p^2)^2\rho^2\bigg)= :G(p,\rho).
	\end{align*}
	The function $G(p,\rho)$ is an increasing function of $\rho$ in the closed interval $[0,1]$, when either of the conditions in $\eqref{l1}$ hold. Thus $\max_{0\leq \rho \leq 1}G(p,\rho)=G(p,1)=:F(p)$.
	On solving further, we get
	\begin{align}\label{eq}
		F(p):=&\dfrac{1}{16(g_2-h_2)^4(g_3-h_3)^2(g_4-h_4)}\Bigg(\bigg(|M|-B_1(g_2-h_2)^2\Big(B_1(g_2-h_2)^2(g_4-h_4)-|T|\nonumber\\&-B_1(g_2-h_2)(g_3-h_3)^2\Big)\bigg)p^4+4B_1(g_2-h_2)^2\Big(|T|+B_1(g_2-h_2)(g_3-h_3)^2\nonumber\\&-2B_1(g_2-h_2)^2(g_4-h_4)\Big)p^2+16B_1^2(g_2-h_2)^4(g_4-h_4)\Bigg)\nonumber\\&=:\dfrac{1}{16(g_2-h_2)^4(g_3-h_3)^2(g_4-h_4)}\bigg(Ap^4+Bp^2+C\bigg).
	\end{align}
	We recall that
	\begin{equation}\label{ABC1}
		\max_{0\leq t \leq 4}(At^2+Bt+C)=
		\begin{cases}
			C, & B\leq 0, A\leq -\tfrac{B}{4};\\
			16A+4B+C, & B\geq 0\text{, } A\geq -\tfrac{B}{8}\text{ or }B\leq 0\text{, } A\geq -\tfrac{B}{4};\\
			\dfrac{4AC-B^2}{4A},& B>0\text{, } A\leq -\tfrac{B}{8}.
		\end{cases}
	\end{equation}
	From~$\eqref{eq}$ and~$\eqref{ABC1}$, we get the desired result.
\end{proof}
\begin{rem} In view of the first case  of~$\eqref{convo}$, Theorem~\ref{sec} reduces to the result obtained by Lee~et~al.~\cite{second} which gives the sharp second Hankel determinant bound for functions in the class $\mathcal{S}^*(\phi)$. 
\end{rem}
\begin{rem}
	We notice that the bound evaluated in \cite[Theorem~1]{second} for the case $3$, reproduced below:
	\[|a_2 a_4-a_3^2|\leq \dfrac{B_1^2}{12}\left(\dfrac{3|4 B_1 B_3 -B_1^4-3 B_2^2|-4B_1 |B_2|+4B_1^2-|B_2|^2}{|4 B_1 B_3-B_1^4-3 B_2^2|-2 B_1 |B_2|-B_1^2}\right),\] has an error and it should be: \[|a_2 a_4-a_3^2|\leq \dfrac{B_1^2}{12}\left(\dfrac{3|4 B_1 B_3 -B_1^4-3 B_2^2|-4B_1 |B_2|-4B_1^2-|B_2|^2}{|4 B_1 B_3-B_1^4-3 B_2^2|-2 B_1 |B_2|-B_1^2}\right).\]
\end{rem}
\begin{rem}
	In view of the sixth case of~$\eqref{convo}$ for, $\phi(z)=\tfrac{1+z}{1-z}$, Theorem~\ref{sec} reduces to a result obtained in \cite{singh}.
\end{rem}
\begin{rem}\label{ci}
	The second Hankel determinant bound for the functions in the class $\mathcal{M}_{g,h}(\Phi)$ can be obtained from Theorem~\ref{sec} by replacing each $B_{i}$ by $(-1)^iC_{i}$.
\end{rem}
\begin{example}\label{exsec}
	Let $f \in \mathcal{S}_{l}(\alpha)$. Then
	\begin{align*}
		|a_2 a_4-a_3^2| \leq \begin{cases}
			\dfrac{1}{4(1+2\alpha)^2}, & 0\leq \alpha\leq\tfrac{2+\sqrt{15}}{11};\\
			\dfrac{31\alpha^4+136\alpha^3-14\alpha^2-24\alpha-3}{2(61\alpha^2-20\alpha-5)(1+\alpha)(1+3\alpha)(1+2\alpha)^2}, & \tfrac{2+\sqrt{15}}{11} \leq \alpha \leq 1.
		\end{cases}
	\end{align*}
\end{example}
\begin{proof}
	For $f \in \mathcal{M}_{g,h}(\psi)$, we have $C_1=-1$, $C_2=1/2$ and $C_3=-1/3$. Now, $\mathcal{M}_{\alpha}(\phi(z))=\mathcal{M}_{\alpha}(\phi(-z))=\mathcal{M}_{\alpha}(\Phi(z))$. Thus using Theorem~\ref{sec} and Remark~\ref{ci}, we get
	\begin{equation*}
		M = \dfrac{1}{12}(1+\alpha)^3 (61 \alpha^2-20 \alpha-5) \text{ and } T=-(1+5a+17 \alpha^2+13 \alpha^3).
	\end{equation*}
	To get the desired estimate, we now consider the following cases:
	\begin{itemize}
		\item[(i)] For $0 \leq \alpha \leq (2+\sqrt{15})/11$, it is easy to verify that $M$ and $T$ satisfy the inequalities given in~$\eqref{m}$, respectively.
		\item[(ii)] For $ (2+\sqrt{15})/11 < \alpha \leq 1$, it is easy to verify that the inequalities~$\eqref{3}$ and $\eqref{mt}$ hold true for $M$ and $T$. 															\end{itemize}
	Now, the assertion follows at once from Theorem~\ref{sec}.
\end{proof}
We recall that the function $f \in \mathcal{S}$ is in the class $\mathcal{S}_s^*(\phi)$ if it satisfies
\begin{equation*}
	\dfrac{2 zf'(z)}{f(z)-f(-z)} \prec \phi(z), \text{ } z \in \mathbb{D}
\end{equation*}
and is in the class $\mathcal{C}_s(\phi)$ if it satisfies
\begin{equation*}
	\dfrac{(2zf'(z))'}{(f(z)-f(-z))'} \prec \phi(z), \text{ } z \in \mathbb{D}.
\end{equation*}
The following couple of corollaries can be obtained from Theorem~\ref{sec}, in view of fourth and fifth cases  of~$\eqref{convo}$ respectively.
\begin{cor} \label{sym}
	Let $f \in \mathcal{S}_s^*(\phi)$. Then  we have
	\begin{align*}
		|a_2 a_4-a_3^2| \leq \begin{cases}
			B_1^2/4,& \text{ when~A~holds};\\ |M|/256, & \text{ when~B~holds};\\ \dfrac{B_1^2}{4}-\dfrac{B_1^2(|T|-24 B_1)^2}{64(|M|-4 B_1 |T|+32B_1^2)}, &\text{ when~C~holds},
		\end{cases}
	\end{align*}
	where $M=16B_1^2B_2-64B_2^2+32B_1B_3 \text{, } T= 16B_2-4B_1^2$ and
	\begin{align*}
		&A: 	|M|-64 B_1^2 \leq 0 \text{ and } |T|-24 B_1 \leq 0.\\ &B:|M|-2B_1 |T|-16 B_1^2 \geq 0 \text{ and } |T|-24B_1 \geq 0,
		\text{ or }\\&\quad~~
		|M|-64 B_1^2 \geq 0 \text{ and }  |T|-24 B_1 \leq 0.\\&C: 	|M|-2B_1 |T|-16B_1^2 \leq 0 \text{ and } |T|-24B_1 >0.
	\end{align*}
\end{cor}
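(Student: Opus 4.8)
The plan is to read off Corollary~\ref{sym} as the specialization of Theorem~\ref{sec} to the $g$ and $h$ appearing in the fourth line of~\eqref{convo}, so that the entire argument reduces to a coefficient substitution once the hypothesis of Theorem~\ref{sec} has been checked. First I would record that $\mathcal{S}_s^*(\phi)=\mathcal{M}_{g,h}(\phi)$ for
\[
g(z)=\frac{z}{(1-z)^2},\qquad h(z)=\frac{z}{1-z^2},
\]
which is exactly the content of the fourth case of~\eqref{convo}. Expanding these as power series gives $g_2=2$, $g_3=3$, $g_4=4$ and $h_2=0$, $h_3=1$, $h_4=0$, hence
\[
g_2-h_2=2,\qquad g_3-h_3=2,\qquad g_4-h_4=4.
\]

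Next I would verify that Theorem~\ref{sec} is applicable, i.e. that~\eqref{l1} holds: here $L=(g_2-h_2)(g_4-h_4)=8$ while $(g_3-h_3)^2=4$, so $(g_3-h_3)^2\le L$ and the first alternative of~\eqref{l1} is satisfied; this is what guarantees that $G(p,\rho)$ is increasing in $\rho$ and puts the three-case estimate of Theorem~\ref{sec} in force. With applicability secured, the heart of the proof is to substitute the six numbers above into the formulas~\eqref{M} and~\eqref{T}. The decisive simplification is that $h_2=0$: every summand of $M$ inside the $B_1^4$ bracket carries a factor of $h_2$ and therefore vanishes, and likewise all the $h_2$-terms of $T$ drop out. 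What survives collapses to
\[
M=16B_1^2B_2-64B_2^2+32B_1B_3,\qquad T=16B_2-4B_1^2,
\]
matching the values asserted in the statement.

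Finally I would translate the three regimes of Theorem~\ref{sec} into conditions A, B, C by inserting the same six numbers into~\eqref{m},~\eqref{3} and~\eqref{mt}. For instance $|M|-B_1^2(g_2-h_2)^4(g_4-h_4)$ becomes $|M|-64B_1^2$, and $|T|+B_1(g_3-h_3)^2(g_2-h_2)-2B_1(g_2-h_2)^2(g_4-h_4)$ becomes $|T|-24B_1$, so the pair of inequalities in part~(1) of Theorem~\ref{sec} is precisely condition~A, and parts~(2) and~(3) likewise reduce to conditions~B and~C. The corresponding bounds simplify as $B_1^2/(g_3-h_3)^2=B_1^2/4$, then $|M|/\big((g_2-h_2)^4(g_3-h_3)^2(g_4-h_4)\big)=|M|/256$, and the case~(3) expression reduces to
\[
\frac{B_1^2}{4}-\frac{B_1^2\,(|T|-24B_1)^2}{64\,(|M|-4B_1|T|+32B_1^2)},
\]
which is exactly the three lines of the corollary. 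The only real work is this bookkeeping; the main pitfall I foresee is keeping track of the many $h_2$-bearing terms in~\eqref{M} and the several cancellations in~\eqref{T}, together with the sign reductions $8B_1-32B_1=-24B_1$ and $-32B_1^2+64B_1^2=+32B_1^2$ that produce the clean constants in the final display.
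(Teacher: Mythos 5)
Your proposal is correct and is exactly the paper's route: the paper derives Corollary~\ref{sym} by specializing Theorem~\ref{sec} to the fourth case of~\eqref{convo}, i.e.\ $g(z)=z/(1-z)^2$, $h(z)=z/(1-z^2)$, so that $g_2-h_2=2$, $g_3-h_3=2$, $g_4-h_4=4$ and the $h_2=0$ cancellations give $M=16B_1^2B_2-64B_2^2+32B_1B_3$ and $T=16B_2-4B_1^2$. Your substitutions into~\eqref{l1}, \eqref{m}, \eqref{3}, \eqref{mt} and the three bounds all check out (indeed you are slightly more careful than the paper, which does not explicitly verify~\eqref{l1}), so there is nothing to add.
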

\begin{cor}\label{csym}
	Let $f \in \mathcal{C}_s(\phi)$. Then we have
	\begin{align*}
		|a_2 a_4-a_3^2| \leq
		\begin{cases}
			B_1^2/36,& \text{when~A~holds};\\|M|/147456,& \text{ when~B~holds};\\ \dfrac{B_1^2}{36}-\dfrac{B_1^2(|T|-368 B_1)^2}{2304(|M|-16B_1|T|+1792 B_1^2)},& \text{ when~C~holds},
		\end{cases}
	\end{align*}
	where $	M=128(9 B_1^2 B_2-32 B_2^2 +18 B_1 B_3)\text{, } T= 8(28 B_2 -9 B_1^2)$ and,
	\begin{align*}
		&A:	|M|-4096B_1^2 \leq 0 \text{ and } |T|-368 B_1\leq 0.\\& B: 	|M|-8B_1|T|-1152B_1^2 \geq 0 \text{ and } |T|-368B_1\geq 0,\text{ or }\\&\quad~~|T|-368B_1 \leq 0 \text{ and } |M|-4096B_1^2 \geq 0.\\&C: 	|T|-368B_1 > 0 \text{ and } |M|-8B_1|T|-1152B_1^2 \leq 0.
	\end{align*}
\end{cor}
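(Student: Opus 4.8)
The plan is to read off Corollary~\ref{csym} directly from Theorem~\ref{sec}, specialized to the fifth line of~$\eqref{convo}$, where $g(z)=z(1+z)/(1-z)^3$ and $h(z)=z(1+z^2)/(1-z^2)^2$. First I would extract the Taylor coefficients of these two functions. Expanding $g$ through $1/(1-z)^3=\sum_{n\ge0}\binom{n+2}{2}z^n$ gives $g(z)=z+\sum_{n\ge2}n^2z^n$, so $g_2=4$, $g_3=9$, $g_4=16$; expanding $h$ through $1/(1-z^2)^2=\sum_{n\ge0}(n+1)z^{2n}$ shows $h$ is an odd function, whence $h_2=h_4=0$ and $h_3=3$. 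Thus $g_2-h_2=4$, $g_3-h_3=6$, $g_4-h_4=16$, all positive.

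Next I would confirm the hypothesis~$\eqref{l1}$ that activates the bound in Theorem~\ref{sec}: here $L=(g_2-h_2)(g_4-h_4)=64$ while $(g_3-h_3)^2=36\le64=L$, so the first alternative of~$\eqref{l1}$ holds, the function $G(p,\rho)$ is increasing in $\rho$, and the reduction to $F(p)$ in~$\eqref{eq}$ is legitimate. I would then substitute the four coefficient values into the closed forms~$\eqref{M}$ and~$\eqref{T}$. Since $h_2=h_4=0$, the entire $B_1^4$ bracket of~$\eqref{M}$ and most of the terms of~$\eqref{T}$ vanish, collapsing them to $M=1152B_1^2B_2-4096B_2^2+2304B_1B_3=128(9B_1^2B_2-32B_2^2+18B_1B_3)$ and $T=224B_2-72B_1^2=8(28B_2-9B_1^2)$, exactly the expressions claimed.

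Finally I would push these through the three branches of Theorem~\ref{sec}. In case~(1) the bound $B_1^2/(g_3-h_3)^2$ becomes $B_1^2/36$; in case~(2) the denominator $(g_2-h_2)^4(g_3-h_3)^2(g_4-h_4)=256\cdot36\cdot16=147456$ produces $|M|/147456$; and in case~(3) the ingredients $(g_3-h_3)^2(g_2-h_2)-2(g_2-h_2)^2(g_4-h_4)=144-512=-368$, $(g_3-h_3)^2(g_2-h_2)^3=2304$, $(g_2-h_2)^4(g_4-h_4)=4096$, and $4(g_3-h_3)^2(g_4-h_4)=2304$ turn the displayed expression into $B_1^2/36-B_1^2(|T|-368B_1)^2/\bigl(2304(|M|-16B_1|T|+1792B_1^2)\bigr)$. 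Under the same substitutions the inequalities~$\eqref{m}$,~$\eqref{3}$, and~$\eqref{mt}$ reduce verbatim to conditions~A, B, and~C. The one point deserving a remark is that Theorem~\ref{sec} nominally requires every $g_n,h_n>0$, which fails here because $h_2=h_4=0$; I would note that this assumption is used only to guarantee $g_n-h_n>0$ and the monotonicity in~$\eqref{l1}$, both of which survive, so the specialization is valid. Beyond checking that the three sign conditions line up correctly, the computation is entirely mechanical and presents no real obstacle.
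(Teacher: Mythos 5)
Your proposal is correct and follows exactly the paper's route: the paper derives Corollary~\ref{csym} by specializing Theorem~\ref{sec} to the fifth case of~$\eqref{convo}$ (stating this in one sentence without showing the computation), and your values $g_2-h_2=4$, $g_3-h_3=6$, $g_4-h_4=16$ and the resulting $M$, $T$, constants $4096$, $368$, $1152$, $147456$, $1792$, $2304$ all check out. Your side remark about $h_2=h_4=0$ versus the nominal hypothesis $h_n>0$ is a fair observation that the paper glosses over (it applies Theorem~\ref{sec} to odd $h$ in both Corollaries~\ref{sym} and~\ref{csym} without comment), and it does not affect the validity of the argument.
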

\begin{rem}
	When $\phi(z)=\tfrac{1+z}{1-z}$, the Corollaries~\ref{sym} and \ref{csym} reduce to  the results obtained in \cite{prajapat} for the classes $\mathcal{S}_s^*$ and $\mathcal{C}_s$ of starlike functions and convex functions with respect to symmetric points  respectively.
\end{rem}
\begin{rem}
	Note that the second Hankel determinant bound for the functions in the classes $\mathcal{S}_s^*(\Phi)$ and $\mathcal{C}_s(\Phi)$ can be obtained from the Corollaries~\ref{sym} and \ref{csym}, respectively by replacing each $B_{i}$ by $(-1)^iC_{i}$.
\end{rem}
Expressing the fourth coefficient $a_4$ for the function $f \in \mathcal{M}_{g,h}(\phi)$ in terms of the Schwarz function $\omega(z)= 1+\omega_1 z+\omega_2 z^2+\cdots$, we obtain the bound of $a_4$ as follows:
\begin{equation}\label{a4a}
	|a_4| \leq \dfrac{B_1}{g_4-h_4}H(q_1, q_2),
\end{equation}
where
\begin{equation}\label{q1}
	q_1 =\dfrac{2B_2 (g_2-h_2)(g_3-h_3)+B_1^2(g_3 h_2+g_2 h_3-2h_2 h_3)}{B_1 (g_2-h_2)(g_3-h_3)}
\end{equation}
and  \begin{equation}\label{q2}
	q_2=\dfrac{B_3 (g_2-h_2)(g_3-h_3)+B_1^3 h_2 h_3 +B_1 B_2 (g_3h_2+g_2 h_3 -2h_2 h_3)}{B_1 (g_2-h_2)(g_3-h_3)}.
\end{equation}
\begin{rem}
	In view of first case of~$\eqref{convo}$, for $\phi(z)=\sqrt{1+z}$, the above result~\eqref{a4a} reduces to the result obtained in \cite{raza}.
\end{rem}
\begin{rem}\label{4rem}
	Note that the bound for the fourth coefficient for the functions in the class $\mathcal{M}_{g,h}(\Phi)$ can be obtained from \eqref{a4a},~\eqref{q1} and ~\eqref{q2} by replacing each $B_{i}$ by $(-1)^iC_{i}$.
\end{rem}
\begin{example}\label{exfourth}
	Let $f \in \mathcal{S}_{l}(\alpha)$, then
	\begin{equation*}
		|a_4| \leq \dfrac{19}{36(1+3\alpha)}.
	\end{equation*}
	The result is sharp.
\end{example}
\begin{proof}
	For $f \in \mathcal{M}_{g,h}(\psi)$, we have $C_1=-1$, $C_2=1/2$, $C_3=-1/3$. Equations $\eqref{q1}$, $\eqref{q2}$ and Remark~\ref{4rem} yield $q_1=-5/2$ and $q_2=19/12$. The result follows from ~\eqref{a4a} and extremal functions $f$, up to rotations can be obtained when $f$ satisfies \[\dfrac{zf'(z)+\alpha z^2 f''(z)}{\alpha z f'(z)+(1-\alpha)f(z)}= 1-\log(1+z).\] This completes the proof.
\end{proof}
Expressing the expression $a_2 a_3-a_4$ for the function $f \in \mathcal{M}_{g,h}(\phi)$ in terms of the Schwarz function $\omega(z)= 1+\omega_1 z+\omega_2 z^2+\cdots$, we obtain the bound as follows:
$$	|a_2a_3-a_4| \leq\dfrac{B_1}{g_4-h_4} H(q_1,q_2),$$
where
\begin{equation}\label{q11}
	q_1= \dfrac{2 B_2(g_2-h_2)^2(g_3-h_3)+B_1^2(g_2-h_2)(-g_4+g_3 h_2+g_2h_3-2 h_2 h_3+h_4)}{B_1(g_2-h_2)^2(g_3-h_3)}
\end{equation}
and
\begin{align}\label{q21}
	q_2=&\dfrac{1}{B_1(g_2-h_2)^2(g_3-h_3)}\bigg(B_3(g_2-h_2)^2(g_3-h_3)+B_1B_2(g_2-h_2)(-g_4+g_3 h_2+g_2h_3\nonumber \\ &-2 h_2 h_3+h_4)+B_1^3h_2(-g_4+g_2 h_3 -h_2 h_3+h_4)\bigg).
\end{align}
\begin{example}\label{exquan}
	Let $f \in \mathcal{S}_{l}(\alpha)$. Then
	\begin{equation*}
		|a_2a_3-a_4| \leq \dfrac{1}{3(1+3\alpha)}.
	\end{equation*}
	The result is sharp.
\end{example}
\begin{proof}
	Here, we have $\Phi(z)=1-\log(1+z)$, $C_1=-1$, $C_2=1/2$ and $C_3=-1/3$. Upon replacing each $B_i$ by $(-1)^iC_i$ in~$\eqref{q11}$ and~$\eqref{q21}$, we get
	\begin{equation*}
		q_1=-\dfrac{5 \alpha^2+3 \alpha+1}{(1+\alpha)(1+2 \alpha)} \text{ and } q_2=\dfrac{19 \alpha^2-12 \alpha-4}{6(1+\alpha)(1+2 \alpha)}.
	\end{equation*}
	Here we observe that $q_1$ and $q_2$ belong to $D_2$, which is given in \cite[Lemma 3]{pokhrov}. Therefore the extremal functions $f$, up to rotations can be obtained when $f$ satisfies \[\dfrac{zf'(z)+\alpha z^2 f''(z)}{\alpha z f'(z)+(1-\alpha)f(z)}= 1-\log(1+z^3).\]
	Thus the desired result follows now.
\end{proof}
\begin{thm}\label{thma5}
	Let $f \in \mathcal{S}_{l}(\alpha)$. Then, we have \begin{equation*}
		|a_5| \leq \dfrac{107}{288(1+4\alpha)}.
	\end{equation*}
	The result is sharp.
\end{thm}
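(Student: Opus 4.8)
The plan is to push the coefficient machinery already set up for $a_2$, $a_3$ and $a_4$ in \eqref{a2a3} and \eqref{a4} one step further. Since $f\in\mathcal{S}_{l}(\alpha)=\mathcal{M}_{\alpha}(\psi)=\mathcal{M}_{g,h}(\Phi)$ with $\Phi(z)=1-\log(1+z)$, I would substitute the concrete data $C_1=-1$, $C_2=1/2$, $C_3=-1/3$, $C_4=1/4$ together with $g_2-h_2=1+\alpha$, $g_3-h_3=2(1+2\alpha)$, $g_4-h_4=3(1+3\alpha)$ and $g_5-h_5=4(1+4\alpha)$. Equating the $z^4$--coefficient on both sides of \eqref{main}, i.e. of $(f*g)(z)/(f*h)(z)=\Phi(\omega(z))$, and writing $\omega=(p-1)/(p+1)$ as in \eqref{omega}, produces a closed expression for $a_5$. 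The crucial structural fact is that $a_5$ sits inside the $z^4$--coefficient (the leading term being $a_5(g_5-h_5)$), and that coefficient involves the Schwarz data only up to $\omega_4$, hence is \emph{linear} in $p_4$. The arithmetic is routine but lengthy; the factor $(1+4\alpha)^{-1}$ should emerge from $g_5-h_5$ in the denominator.

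The decisive step is to control the part of $a_5$ carrying $p_4$. Writing $a_5=\mu\,p_4+R(p_1,p_2,p_3)$ for the appropriate constant $\mu$, I would invoke the second Libera inequality in \eqref{p31}, namely $|p_1^4-3p_1^2p_2+p_2^2+2p_1p_3-p_4|\le2$, to replace $p_4$ by $p_1^4-3p_1^2p_2+p_2^2+2p_1p_3$ up to an error $D$ with $|D|\le 2$. This is the natural tool here because at the conjectured extremal point $p_1=p_2=p_3=p_4=2$, which corresponds to $\omega(z)=z$ and hence to $f_0$ of \eqref{strls}, that combination equals exactly $2$, so the inequality is saturated precisely where the bound should be attained. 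After this substitution $a_5$ reduces to $-\mu D+\widetilde{R}(p_1,p_2,p_3)$, with $|D|\le 2$ and $\widetilde R$ a functional in $p_1,p_2,p_3$ alone.

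To finish I would estimate $\widetilde R$ by inserting the representations $2p_2=p_1^2+x(4-p_1^2)$ and $4p_3=p_1^3+2p_1(4-p_1^2)x-p_1(4-p_1^2)x^2+2(4-p_1^2)(1-|x|^2)y$ from Lemma~\ref{p1p2p3}, normalising $p_1=p\in[0,2]$, and maximising the resulting expression over $p\in[0,2]$, $|x|\le1$, $|y|\le1$; the Fekete--Szeg\"o type estimate of Lemma~\ref{p1p2} can be used to dispose of any $|p_2-vp_1^2|$ blocks that appear. Combining $2|\mu|$ with the maximum of $|\widetilde R|$ should give exactly $107/(288(1+4\alpha))$, and tracing the equality cases back through $D=2$ and $p=2$ identifies $f_0$ and its rotations as extremal, which establishes sharpness.

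The main obstacle I anticipate is not any single inequality but the bookkeeping of the reduction: the coefficient $\mu$ and the leftover polynomial $\widetilde R$ must be arranged so that the crude split $|a_5|\le 2|\mu|+\max|\widetilde R|$ stays tight, i.e. so that the sign of $D$ and the maximiser of $\widetilde R$ are realised simultaneously at $p_1=p_2=p_3=p_4=2$. If the plain triangle-inequality split loses sharpness, I would instead retain the Libera term and optimise $|-\mu D+\widetilde R|$ jointly, using $|D|\le 2$ together with the $(p,x,y)$ representation of $\widetilde R$ and choosing the sign of $D$ to match the leading term; this avoids prematurely separating $D$ at the cost of a more delicate optimisation in $p$, $x$ and $y$.
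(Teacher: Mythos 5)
Your opening step coincides with the paper's: compute $a_5$ from \eqref{gh}--\eqref{main}, note it is linear in $p_4$ with the factor $(1+4\alpha)^{-1}$ coming from $g_5-h_5$, and absorb the $p_4$-part into the second Libera expression of \eqref{p31}. Indeed the paper writes (up to the factor $\tfrac{1}{8(1+4\alpha)}$) $a_5$ as $P+\tfrac16 p_1Q-\tfrac1{24}p_1^2R$ with $P=p_1^4-3p_1^2p_2+p_2^2+2p_1p_3-p_4$, which is exactly your ``$-\mu D+\widetilde R$'' with $D=P$. Your identification of the extremal data $p_1=p_2=p_3=p_4=2$ (i.e.\ $\omega(z)=z$, the analogue of $f_0$ in \eqref{strls}) is also correct --- in fact more reliable than the numerical values the paper itself quotes for sharpness, which are not even admissible Carath\'eodory data.

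The genuine gap is in your plan for $\widetilde R$. After removing $P$, the remainder (scaled) is $\widetilde R=\tfrac{119}{1152}p_1^4-\tfrac{3}{16}p_1^2p_2+\tfrac1{12}p_1p_3$, and its true maximum over $\mathcal{P}$ is $\tfrac{35}{72}$, attained at $p_1=2$. But the tools you name cannot see this: inserting the Lemma~\ref{p1p2p3} representation and applying the termwise triangle inequality gives, at $|x|=1$, the majorant $\tfrac{35}{1152}p^4+\tfrac{7}{96}p^2(4-p^2)$, whose maximum over $p\in[0,2]$ is $\tfrac12$ (at $p^2=24/7$), strictly larger than $\tfrac{35}{72}$; the plain Fekete--Szeg\H{o} bound of Lemma~\ref{p1p2} loses the same amount. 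Either way you land at $|a_5|\le\tfrac{3}{8(1+4\alpha)}=\tfrac{108}{288(1+4\alpha)}$, missing the sharp constant by exactly $\tfrac{1}{288}$. Your fallback of optimising $|-\mu D+\widetilde R|$ ``jointly'' with $|D|\le2$ does not repair this: once $D$ is treated as a free parameter of modulus at most $2$, the joint maximum is again $2|\mu|+\max|\widetilde R|$, so the whole burden sits on estimating $\widetilde R$ sharply. The missing idea is the paper's second decomposition $\widetilde R=\tfrac1{12}p_1Q-\tfrac1{48}p_1^2R$, where $Q=p_3-2p_1p_2+p_1^3$ is controlled by the \emph{first} Libera inequality in \eqref{p31} and $R=p_2-\tfrac{23}{24}p_1^2$ by the \emph{improved} Fekete--Szeg\H{o} estimate \eqref{C}, namely $|R|\le 2-\tfrac1{24}|p_1|^2$; the extra term $-\tfrac1{576}|p_1|^4$ is precisely the $\tfrac1{36}$ saving that turns $108$ into $107$, and all three blocks are simultaneously extremal with aligned signs at $p_1=p_2=p_3=p_4=2$ ($P=Q=2$, $R=-\tfrac{11}{6}$), which is what makes the final bound sharp.
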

\begin{proof}
	The equations~$\eqref{gi}$,~$\eqref{gh}$,~$\eqref{omega}$ and~$\eqref{main}$ with $\psi(z)$ in place of $\phi(z)$, yield $a_5$ in terms of $p_1$, $p_2$, $p_3$ and $p_4$ as follows
	\begin{align*}
		|a_5|&=\dfrac{1}{4(1+4\alpha)}\left|\dfrac{695}{1152}p_1^4-\dfrac{27}{16}p_1^2p_2+\dfrac{1}{2}p_2^2+\dfrac{13}{12}p_1p_3-\dfrac{1}{2}p_4\right|\\& =:\dfrac{1}{8(1+4\alpha)}\left|P+\dfrac{1}{6}p_1 Q-\dfrac{1}{24}p_1^2 R\right|\\&\leq \dfrac{1}{8(1+4\alpha)}\left(|P|+\dfrac{1}{6}|p_1| |Q|+\dfrac{1}{24}|p_1|^2 |R|\right),
	\end{align*}
	where $P=p_1^4-3p_1^2p_2+p_2^2+2p_1p_3-p_4$, $Q=p_3-2p_1p_2+p_1^3$ and $R=p_2-(23/24)p_1^2$. Since $|P|$, $|Q| \leq 2$ from~$\eqref{p31}$ and $|R| \leq 2$ from~$\eqref{C}$, we get
	\begin{align*}
		|a_5|&\leq \dfrac{1}{8(1+4\alpha)}\left(2+\dfrac{2}{3}+\dfrac{1}{12}|p_1|^2-\dfrac{1}{576}|p_1|^4\right).
	\end{align*}
	Let us assume $G(p_1):=|p_1|^2/12-|p_1|^4/576$. Then, the formula given in~$\eqref{ABC1}$ yields the bound when $A=-1/576$, $B=1/12$ and $C=0$. Letting $p_1=1$, $p_2=2$, $p_3=-2/3$ and $p_4=-1/64$ shows that the result is sharp.
\end{proof}
Recall that $|H_3(1)|\leq |a_3||a_2a_4-a_3^2|+|a_4||a_4-a_2a_3|+|a_5||a_3-a_2^2|.$ Using Examples~\ref{exfek},~\ref{exsec}, ~\ref{exfourth},~\ref{exquan} and Theorem~\ref{thma5}, we can estimate the bound for $H_3(1)$ for the class $\mathcal{S}_{l}(\alpha)$, which is stated below in the following theorem:
\begin{thm}
	Let $f\in \mathcal{S}_{l}(\alpha)$. Then
	\begin{equation*}
		|H_3(1)| \leq g(\alpha),
	\end{equation*}
	where
	\begin{equation*}
		g(\alpha)= \dfrac{949 + 11388\alpha + 52493\alpha^2 + 114974\alpha^3 + 117180\alpha^4 + 42568 \alpha^5}{1728(1+4\alpha)(1+3\alpha)^2(1+2a)^4},
	\end{equation*}
	when $0\leq \alpha\leq\tfrac{2+\sqrt{15}}{11}$ and
	\begin{align*}
		g(\alpha)=&\dfrac{1}{1728(1+\alpha)(1+4\alpha)(1+3\alpha)^2(1+2\alpha)^3(61\alpha^2-20\alpha-5)}\bigg(-5069-76035\alpha- 385994\alpha^2\\& - 619570 \alpha^3 + 831511 \alpha^4 +3545777 \alpha^5 + 3327024 \alpha^6 +
		1298324 \alpha^7\bigg),
	\end{align*}
	when $\tfrac{2+\sqrt{15}}{11}\leq\alpha\leq1$.
\end{thm}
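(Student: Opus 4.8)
The plan is to exploit the subadditive decomposition of the determinant recorded immediately before the statement, so that the entire argument reduces to inserting the five coefficient estimates already proved. First I would record the algebraic identity
\[
H_3(1) = a_3\,(a_2a_4 - a_3^2) + a_4\,(a_2a_3 - a_4) + a_5\,(a_3 - a_2^2),
\]
obtained by expanding the $3\times 3$ Hankel determinant along its first row (with $a_1=1$) and regrouping the resulting monomials; passing to moduli and applying the triangle inequality yields
\[
|H_3(1)| \le |a_3|\,|a_2a_4 - a_3^2| + |a_4|\,|a_2a_3 - a_4| + |a_5|\,|a_3 - a_2^2|,
\]
which is exactly the inequality quoted before the theorem. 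No subordination input beyond the earlier examples is needed from this point on.

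Next I would feed in the bounds established earlier: $|a_3|\le 3/(4(1+2\alpha))$ together with $|a_3 - a_2^2|\le 1/(2(1+2\alpha))$ from Example~\ref{exfek}, the fourth-coefficient bound $|a_4|\le 19/(36(1+3\alpha))$ from Example~\ref{exfourth}, the estimate $|a_2a_3-a_4|\le 1/(3(1+3\alpha))$ from Example~\ref{exquan}, and $|a_5|\le 107/(288(1+4\alpha))$ from Theorem~\ref{thma5}. The only factor carrying a case distinction is $|a_2a_4-a_3^2|$, whose bound in Example~\ref{exsec} splits at $\alpha = (2+\sqrt{15})/11$; this is precisely what propagates into the two branches of $g(\alpha)$.

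For the first regime $0\le \alpha \le (2+\sqrt{15})/11$, every factor is an elementary rational function of $\alpha$, so I would substitute $|a_2a_4-a_3^2|\le 1/(4(1+2\alpha)^2)$, express the three products over a common denominator assembled from the numerical factor $1728$ and suitable powers of $(1+2\alpha)$, $(1+3\alpha)$ and $(1+4\alpha)$, and read off the polynomial numerator of $g(\alpha)$. For the second regime I would instead use the rational bound with denominator $2(61\alpha^2-20\alpha-5)(1+\alpha)(1+3\alpha)(1+2\alpha)^2$; combining this with the remaining factors over the enlarged common denominator $1728(1+\alpha)(1+4\alpha)(1+3\alpha)^2(1+2\alpha)^3(61\alpha^2-20\alpha-5)$ produces the degree-seven numerator.

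The routine but genuinely laborious part is the second case: clearing the quadratic factor $61\alpha^2-20\alpha-5$ against the three other rational terms and checking that the numerator collapses exactly to the stated degree-seven polynomial. I would organize this by rewriting each of the three products as a single fraction over that full common denominator and then summing numerators, deferring the final expansion to a symbolic computation. It is worth noting that, because the estimate is assembled purely from the triangle inequality, $g(\alpha)$ is not claimed to be sharp: the extremal functions behind the five ingredient bounds need not coincide, so no equality statement accompanies the theorem.
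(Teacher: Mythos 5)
Your proposal is correct and is essentially identical to the paper's argument: the paper likewise starts from the inequality $|H_3(1)|\leq |a_3||a_2a_4-a_3^2|+|a_4||a_4-a_2a_3|+|a_5||a_3-a_2^2|$ and simply substitutes the bounds from Examples~4.2, 4.4, 4.6, 4.7 and Theorem~4.8, with the case split at $\alpha=(2+\sqrt{15})/11$ inherited from the second Hankel determinant bound. Your algebra also checks out (e.g., in the first regime the three terms combine over $1728(1+4\alpha)(1+3\alpha)^2(1+2\alpha)^3$, and the paper's stated form carries an extra unreduced factor of $(1+2\alpha)$ in numerator and denominator), and your observation that no sharpness is claimed is consistent with the paper.
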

\begin{rem}
	Taking $\alpha=0$ and $1$, we get all the above  bounds for the classes $\mathcal{S}^*_{l}$ and $\mathcal{C}_{l}$, respectively.
\end{rem}
On the similar lines of the estimation of Third Hankel determinant for functions in $\mathcal{SL}^*$ in~\cite{banga2}, we compute the same for $f \in \mathcal{S}_l^*$.
\begin{thm}
	Let $f \in \mathcal{S}_l^*$, then
	$$|H_3(1)| \leq 1/9.$$ The result is sharp.\end{thm}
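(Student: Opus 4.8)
The plan is to estimate $H_3(1)=a_3a_5-a_4^2-a_2^2a_5+2a_2a_3a_4-a_3^3$ as a single expression in the Carath\'eodory coefficients, rather than through the factorwise triangle inequality $|H_3(1)|\le|a_3||a_2a_4-a_3^2|+|a_4||a_4-a_2a_3|+|a_5||a_3-a_2^2|$: the latter only reproduces $g(0)=949/1728$, which badly overshoots the claimed $1/9=192/1728$ because it discards the internal cancellations of the determinant. Since $\mathcal{S}_{l}^*=\mathcal{S}_{l}(0)$, I would first specialise to $\alpha=0$ the coefficient formulas already obtained in this section: from \eqref{a2a3} and \eqref{a4} (with each $B_i$ replaced by $(-1)^iC_i$, i.e. $C_1=-1$, $C_2=1/2$, $C_3=-1/3$) one reads off $a_2,a_3,a_4$, and the proof of Theorem~\ref{thma5} gives $a_5=\tfrac14\big(\tfrac{695}{1152}p_1^4-\tfrac{27}{16}p_1^2p_2+\tfrac12p_2^2+\tfrac{13}{12}p_1p_3-\tfrac12p_4\big)$, where $p(z)=1+p_1z+\cdots\in\mathcal P$ arises from the substitution \eqref{omega}. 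Substituting all four into $H_3(1)$ yields one polynomial in $p_1,p_2,p_3,p_4$.

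Next, because $\mathcal P$ is invariant under the rotation $p(z)\mapsto p(e^{i\theta}z)$, I may assume $p_1=p\in[0,2]$. I then eliminate $p_2,p_3$ by Lemma~\ref{p1p2p3}, writing them in terms of $p$ and parameters $x,y$ with $|x|,|y|\le1$, and eliminate $p_4$ by the standard companion (Libera--Z{\l}otkiewicz) formula expressing $p_4$ through $p$, $x$ and a third parameter $u$ in the closed unit disk. After this substitution $H_3(1)$ becomes explicit in $p$ and the moduli of the free parameters, and applying the triangle inequality produces a bounding function $\Psi(p,|x|,|y|,|u|)$ on $[0,2]\times[0,1]^3$.

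The decisive step is to maximise $\Psi$. I would show $\Psi$ is nondecreasing in the auxiliary moduli, pushing the maximum to their boundary values, and then study the resulting function of $p$ alone, handling $p=0$ and $p\in(0,2]$ separately by completing squares and checking monotonicity in $p$. The \textbf{main obstacle} is exactly this maximisation: since the extremum lives on the boundary $p=0$ of the parameter box, the cancellations in $H_3(1)$ must be retained throughout and one cannot bound factor by factor. At $p=0$ we have $a_2=0$, so $H_3(1)$ collapses to $a_3a_5-a_4^2-a_3^3$, whose maximum modulus over the remaining parameters is $1/9$. Sharpness finally follows from the function $f$ determined by $zf'(z)/f(z)=\psi(z^3)=1-\log(1+z^3)$, which lies in $\mathcal{S}_{l}^*$ because $z^3$ is a Schwarz function; for it $a_2=a_3=a_5=0$ and $|a_4|=1/3$, whence $H_3(1)=-a_4^2=-1/9$, so the bound $1/9$ is attained.
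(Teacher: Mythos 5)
Your proposal follows essentially the same route as the paper: the paper's proof is precisely an appeal to the method of \cite[Theorem~2.1]{banga2} --- treating $H_3(1)$ as a single expression in the Carath\'eodory coefficients, parametrizing $p_2,p_3,p_4$, and maximizing over the parameter box --- which is exactly the plan you outline, and your sharpness function ($zf'(z)/f(z)=1-\log(1+z^3)$, giving $a_2=a_3=a_5=0$, $a_4=-1/3$, $|H_3(1)|=1/9$) is identical to the paper's $\tilde f$. Like you, the paper does not execute the decisive maximization explicitly but defers it to the cited computation with ``altogether new values,'' and your sketch of that step (maximum occurring at $p_1=0$, where the determinant collapses to $a_3a_5-a_4^2-a_3^3$) is consistent with it.
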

\begin{proof}
	The proof is on the similar lines of the proof of~\cite[Theorem~2.1]{banga2}, however the computation involves altogether new values.  Let $$\tilde{f}(z)=  z \exp\left(\int_{0}^{z}\dfrac{-\log(1+t^3)}{t}~dt\right) = z-\dfrac{z^4}{3}+\cdots,$$  clearly which belongs to $\mathcal{S}_l^*$.  The equality holds for the above defined function $\tilde{f}$, as $a_2=a_3=a_5=0$ and $a_4=-1/3$.
\end{proof}
As we know the function $z$ is univalent in $\mathbb{D}$, we have $z^n \prec z$, which further implies $1+z^n \prec 1+z$ $(n \geq 1).$ Now, there exists a Schwarz function $\omega(z)$ such that $1+z^n =1+\omega(z).$ Since $|z|<1$ and $|\omega(z)|<1$,  we can view  $1+z$ and $1+\omega(z)$  as a shifted unit disk. Thus the branch of the log function is well defined and we can write:
\begin{equation*}
	1-\log(1+z^n)=1-\log(1+\omega(z)).
\end{equation*}
Hence $1-\log(1+z^n) \prec 1-\log(1+z)$ for all $n \geq 1.$
Let us define a function $f_n$ in the class $\mathcal{A}$ as:
\begin{equation*}
	f_n(z)=z+a_{2,n}z^2+a_{3,n}z^3+\cdots=z+\sum_{m=2}^{\infty}a_{m,n}z^m.
\end{equation*}We consider the subclass $\mathcal{S}_{l,n}^*$ of $\mathcal{S}^*_{l}$ consisting of the functions $f_n$ satisfying
\begin{equation*}
	\dfrac{zf_n'(z)}{f_n(z)}=1-\log(1+z^n)\quad (n\geq 1),
\end{equation*}
which upon simplification yields
\begin{equation*}
	zf'_n(z)=f_n(z)(1-\log(1+z^n)).
\end{equation*}
Further, we have
\begin{equation*}
	\sum_{j=1}^{\infty}\left(\sum_{k=1}^{\infty}(-1)^k\dfrac{a_j}{k}z^{nk+j}\right)=\sum_{s=1}^{\infty}(s-1)a_sz^s.
\end{equation*}
On comparing the coefficients of like power terms on either side of the above equation, we get a special pattern due to which we conjecture the following:
\begin{conjecture} 	Let $f_n \in \mathcal{S}_{l,n}^*$. Then
	\[|a_{m,n}| \leq |a_{m,1}|.\]\end{conjecture}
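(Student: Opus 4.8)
The plan is to turn the coefficient identity already extracted in the text into a recursion and to compare it against the $n=1$ recursion. First I would read off, from $zf_n'(z)=f_n(z)\bigl(1-\log(1+z^n)\bigr)$ together with $a_{1,n}=1$, the relation
\begin{equation*}
(m-1)a_{m,n}=\sum_{\substack{k\ge1\\ nk<m}}\frac{(-1)^k}{k}\,a_{m-nk,\,n}.
\end{equation*}
Because the recursion only ever lowers the index by a multiple of $n$ and is seeded by $a_{1,n}=1$, one gets $a_{m,n}=0$ unless $m\equiv1\pmod n$, so the conjecture is trivial off this residue class. Writing $m=1+n\ell$ and $a_{1+n\ell,\,n}=(-1)^\ell b^{(n)}_\ell$, the signs cancel and leave the \emph{positive} recursion $n\ell\,b^{(n)}_\ell=\sum_{k=1}^{\ell}b^{(n)}_{\ell-k}/k$ with $b^{(n)}_0=1$; in particular $|a_{m,n}|=b^{(n)}_\ell$. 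Equivalently, with $S(w)=\sum_{k\ge1}w^k/k^2$ one has $\sum_\ell b^{(n)}_\ell w^\ell=\exp\bigl(\tfrac1n S(w)\bigr)$, so the conjecture is exactly the coefficient comparison $b^{(n)}_\ell\le b^{(1)}_{n\ell}$, i.e. $[w^\ell]\exp(\tfrac1n S)\le[w^{n\ell}]\exp(S)$.

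Next I would try to prove $b^{(n)}_\ell\le b^{(1)}_{n\ell}$ by induction on $\ell$, the base case $\ell=0$ being $b^{(n)}_0=b^{(1)}_0=1$. Assuming $b^{(n)}_j\le b^{(1)}_{nj}$ for all $j<\ell$ and using that the recursions for $b^{(n)}_\ell$ and $b^{(1)}_{n\ell}$ carry the \emph{same} prefactor $1/(n\ell)$, the inductive step collapses to the single inequality
\begin{equation*}
\sum_{k=1}^{\ell}\frac{b^{(1)}_{n\ell-nk}}{k}\;\le\;\sum_{k=1}^{n\ell}\frac{b^{(1)}_{n\ell-k}}{k},
\end{equation*}
that is, the full weighted sum on the right must dominate $n$ times its restriction to the indices divisible by $n$ (on those indices the two sides agree up to the factor $n$). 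Establishing this crux would finish the argument.

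The hard part is precisely this crux, and I do not expect it to be routine. Already its simplest instance $\ell=1$ (that is, $m=n+1$) reads $n\,b^{(1)}_n\ge1$, equivalently $[w^n]\bigl(-\log(1-w)\exp(S(w))\bigr)\ge1$, a genuine arithmetic bound with no evident monotonicity in $n$. I would attack it in two complementary ways: (i) via the exponential-formula expansions $b^{(1)}_m=\sum_{\lambda\vdash m}\prod_k (k^2)^{-m_k}/m_k!$ and $b^{(n)}_\ell=\sum_{\lambda\vdash\ell}n^{-p(\lambda)}\prod_k(k^2)^{-m_k}/m_k!$, with $p(\lambda)$ the number of parts, seeking a weight-respecting injection from partitions of $\ell$ into partitions of $n\ell$ that absorbs the factor $n^{-p(\lambda)}$; and (ii) via singularity analysis of $\exp(S(w))$ near $w=1$, where $S(w)\to\pi^2/6$, to control $b^{(1)}_m$ asymptotically. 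The real difficulty is uniformity in $n$: the weights $n^{-p(\lambda)}$ shrink as the number of parts grows, so one must show that the partitions of $n\ell$ whose parts are \emph{not} all divisible by $n$ supply exactly the compensating mass. Since this balance appears tight, I would first test the crux numerically for moderate $\ell$ and a range of $n$ before committing to either route, as the uniform estimate—rather than any single algebraic manipulation—is where the argument stands or falls.
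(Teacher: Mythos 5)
Your reduction is sound, but note first that the paper contains no proof to compare against: the statement appears there only as a conjecture, motivated by a ``special pattern'' in the coefficient recursion. Everything up to your crux is correct and is indeed the right framework. From $zf_n'(z)=f_n(z)\bigl(1-\log(1+z^n)\bigr)$ one gets $f_n(z)=z\exp\bigl(\tfrac1n S(-z^n)\bigr)$ with $S(w)=\sum_{k\ge1}w^k/k^2$, hence $a_{m,n}=0$ unless $m=1+n\ell$, then $|a_{1+n\ell,\,n}|=b^{(n)}_\ell:=[w^\ell]\exp\bigl(\tfrac1n S(w)\bigr)$, the positive recursion $n\ell\,b^{(n)}_\ell=\sum_{k=1}^{\ell}b^{(n)}_{\ell-k}/k$, and the equivalence of the conjecture with $b^{(n)}_\ell\le b^{(1)}_{n\ell}$ are all correct.

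The fatal problem is that your crux inequality is false---and with it the conjecture itself. As you observe, at $\ell=1$ the crux reads $n\,b^{(1)}_n\ge1$; since $b^{(n)}_1=1/n$ exactly, this is not an artifact of your inductive scheme but is literally the conjecture at $m=n+1$. Your own route (ii) shows it must fail for large $n$: near $w=1$ one has $S(w)=\tfrac{\pi^2}{6}+(1-w)\log(1-w)-(1-w)+O\bigl((1-w)^2\log(1-w)\bigr)$, so $b^{(1)}_n\sim e^{\pi^2/6}/n^2$ and $n\,b^{(1)}_n\to0$. The failure in fact begins at $n=8$: the recursion $m\,b^{(1)}_m=\sum_{k=1}^{m}b^{(1)}_{m-k}/k$ gives exactly $b^{(1)}_5=\tfrac{641}{2400}$, $b^{(1)}_6=\tfrac{51103}{259200}$, $b^{(1)}_7=\tfrac{1897879}{12700800}$, and then
\begin{equation*}
8\,b^{(1)}_8=\frac{23581083}{25401600}<1,
\end{equation*}
so that $|a_{9,8}|=\tfrac18>b^{(1)}_8=|a_{9,1}|\approx0.11604$; similarly $b^{(6)}_2=\tfrac1{24}+\tfrac1{72}=\tfrac1{18}\approx0.0556>b^{(1)}_{12}\approx0.0511$ gives a failure at $(m,n)=(13,6)$. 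So the numerical test you proposed to run ``before committing'' is not a preliminary sanity check but the decisive step: carried out, it converts your attempted proof into a disproof. Neither route (i) nor (ii) can close the gap, because the inequality they target is false; the correct outcome of your (otherwise valuable) analysis is a counterexample refuting the paper's conjecture, not a proof of it.
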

\section{Further results}
\label{sec:5}
We recall that the set $\mathcal{B}$ is the space of all Bloch functions. An analytic function $f$ is said to be a Bloch function if it satisfies
\begin{equation}\label{sup}
	\kappa_{\mathcal{B}}(f)=\sup_{z \in \mathbb{D}} (1-|z|^2)|f'(z)| < \infty.
\end{equation}
Also, $\mathcal{B}$ is a Banach space with the norm $||.||_{\mathcal{B}}$ defined by
\begin{equation}\label{def}
	||f||_{\mathcal{B}}=|f(0)|+\kappa_{\mathcal{B}}(f), \text{ } f \in \mathcal{B}.
\end{equation}
Now, we give below a result involving Bloch function norm for the functions in the class $\mathcal{S}^*_{l}$:
\begin{thm}
	The set $\mathcal{S}^*_{l} \subseteq \mathcal{B}.$ Further, if $f \in \mathcal{S}^*_{l}$, then $||f||_{\mathcal{B}}\leq x\approx 1.27429$.
\end{thm}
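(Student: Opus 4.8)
The plan is to exploit that every $f\in\mathcal{S}^*_{l}\subset\mathcal{A}$ is normalized, so $f(0)=0$ and hence, by~\eqref{def}, $\|f\|_{\mathcal{B}}=|f(0)|+\kappa_{\mathcal{B}}(f)=\kappa_{\mathcal{B}}(f)=\sup_{z\in\mathbb{D}}(1-|z|^2)|f'(z)|$. Thus the whole theorem reduces to producing a finite uniform bound for $(1-|z|^2)|f'(z)|$: finiteness gives the inclusion $\mathcal{S}^*_{l}\subseteq\mathcal{B}$, and the explicit value of the supremum gives the norm estimate.

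First I would convert the membership $f\in\mathcal{S}^*_{l}$, i.e. $zf'(z)/f(z)\prec 1-\log(1+z)$, into a pointwise bound on $|f'(z)|$ on each circle $|z|=r$. Writing $zf'(z)/f(z)=1-\log(1+\omega(z))$ for a Schwarz function $\omega$, the modulus of the subordinate value is governed by the maximum modulus of $\psi$ over $\{|w|\le r\}$. This maximum is exactly the quantity already computed inside the majorization argument of Theorem~\ref{maj}: the sign analysis of the $t$-derivative $h_t(R,t)$ of $|1-\log(1+Re^{it})|^2$ (nonnegative on $[0,\pi)$, nonpositive on $(-\pi,0]$) forces the extreme modulus to occur at the real point $w=-r$, giving $\max_{|w|\le r}|1-\log(1+w)|=1-\log(1-r)$, consistent with the real-part bound~\eqref{re} of Theorem~\ref{bounds}. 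Combined with the normalization, this yields the estimate $|f'(z)|\le 1-\log(1-r)$ for $|z|=r$, whence
\[
(1-|z|^2)|f'(z)|\le (1-r^2)\bigl(1-\log(1-r)\bigr)=:\Lambda(r).
\]

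It then remains to maximize the single-variable function $\Lambda$ over $[0,1)$. Here $\Lambda(0)=1$ while $\Lambda(r)\to 0$ as $r\to 1^-$ (the factor $1-r^2$ decays faster than $1-\log(1-r)$ grows), so an interior maximizer exists. Differentiating, $\Lambda'(r)=0$ reduces to the transcendental equation $2r\bigl(1-\log(1-r)\bigr)=1+r$; solving it numerically gives a unique root $r_*\approx 0.4535$, and $\Lambda(r_*)\approx 1.27429=:x$. This furnishes $\sup_{z\in\mathbb{D}}(1-|z|^2)|f'(z)|\le x<\infty$, which simultaneously establishes $\mathcal{S}^*_{l}\subseteq\mathcal{B}$ and the bound $\|f\|_{\mathcal{B}}\le x$.

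The step I expect to be the main obstacle is passing from the subordination control of $zf'(z)/f(z)$ to a genuine bound on $|f'(z)|$ itself: one must ensure the growth of $|f(z)|/|z|$ (controlled through Corollary~\ref{grs}) does not spoil the estimate, so the extremal analysis has to be carried out carefully, and the max-modulus computation imported from the majorization proof is the technical heart. The subsequent optimization of $\Lambda$ is routine once that pointwise bound is in hand, apart from the fact that the critical equation is transcendental and must be resolved numerically to extract $x\approx 1.27429$.
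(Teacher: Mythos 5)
Your overall skeleton matches the paper's proof: use $f(0)=0$ to reduce $\|f\|_{\mathcal{B}}$ to $\sup_{z\in\mathbb{D}}(1-|z|^2)|f'(z)|$, establish the pointwise bound $|f'(z)|\le 1-\log(1-r)$ on $|z|=r$, then maximize $\Lambda(r)=(1-r^2)(1-\log(1-r))$; your critical equation is equivalent to the paper's $1-r+2r\log(1-r)=0$, with root $r_0\approx 0.453105$ and maximum $\approx 1.27429$. The genuine gap is precisely the step you flag as ``the main obstacle'' and then never carry out: deducing $|f'(z)|\le 1-\log(1-r)$ from the subordination. What subordination plus the maximum-modulus computation actually give is
\begin{equation*}
|f'(z)|=\frac{|f(z)|}{|z|}\left|\frac{zf'(z)}{f(z)}\right|\le \frac{|f(z)|}{|z|}\bigl(1-\log(1-r)\bigr),
\end{equation*}
and normalization does not force $|f(z)|/|z|\le 1$ (that would require $f$ to map $\mathbb{D}$ into $\mathbb{D}$, which Schwarz-type reasoning does not provide here). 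On the contrary, the structural formula~\eqref{nstr} (equivalently Corollary~\ref{grs}) gives only $|f(z)|/|z|\le\exp\bigl(\sum_{n\ge 1}r^{n}/n^{2}\bigr)>1$. Worse, the inequality you assert is false: for the Schwarz function $\omega(z)=-z$, the function
\begin{equation*}
f_*(z)=z\exp\int_{0}^{z}\dfrac{-\log(1-t)}{t}\,dt
\end{equation*}
belongs to $\mathcal{S}^*_{l}$, and for $0<r<1$ one has
\begin{equation*}
f_*'(r)=\bigl(1-\log(1-r)\bigr)\exp\Bigl(\sum_{n=1}^{\infty}\dfrac{r^{n}}{n^{2}}\Bigr)>1-\log(1-r),
\end{equation*}
with $(1-r^2)f_*'(r)\approx 2.8$ near $r=0.75$, well above $1.27429$. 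So the growth of $|f(z)|/|z|$ does spoil the estimate: your argument as written can deliver at most the inclusion $\mathcal{S}^*_{l}\subseteq\mathcal{B}$ with the larger constant $\max_{0\le r<1}(1-r^2)\exp\bigl(\sum_{n\ge1}r^{n}/n^{2}\bigr)\bigl(1-\log(1-r)\bigr)$, not the stated value $x\approx 1.27429$.

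For comparison, the paper does not route the estimate through $|f(z)|/|z|$ at all: it differentiates the structural formula to write $f'(z)=q(z)\exp\int_{0}^{z}\frac{q(t)-1}{t}\,dt$ with $q(z)=1-\log(1+\omega(z))$, bounds $|q(z)|\le 1-\log(1-R)$, and then claims the exponential factor collapses to $\exp\bigl(|\log(1-R)|\int_{-\pi}^{\pi}e^{i\theta_1}\,d\theta_1\bigr)=1$. Whether that manipulation is itself rigorous is doubtful (the same function $f_*$ puts equal pressure on the paper's pointwise bound), but the relevant point for your proposal is that its central inequality is left unproved, and the example above shows it cannot be proved as stated; any repair must confront the factor $|f(z)|/|z|$ explicitly rather than appeal to normalization.
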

\begin{proof}
	If $f \in \mathcal{S}^*_{l}$, then $z f'(z)/f(z)=1-\log(1+\omega(z)) =:g(z)$. By the structural formula, given in~$\eqref{nstr}$, we get
	\begin{equation*}
		f(z)=z \exp\left(\int_{0}^{z}\dfrac{g(t)-1}{t}dt\right).
	\end{equation*}
	Upon differentiating $f$ and further considering the modulus, we obtain
	\begin{align}\label{norm}
		|f'(z)|=&|g(z)|\left|\exp\int_{0}^{z}\dfrac{g(t)-1}{t}dt\right|\nonumber\\ \leq&  |1-\log(1+\omega(z))|\exp\left(\int_{0}^{z}\dfrac{|\log(1+\omega(t))|}{|t|}dt\right).
	\end{align}
	Let $t=r e^{i \theta_1}$ and $\omega(t)=R e^{i \theta_2}$, where $R\leq r=|t|<1$ and $-\pi<\theta_1,\theta_2<\pi$. Now, by using the similar analysis carried out in the proof of part (v) of Theorem~\ref{rad} and in Theorem~\ref{maj}, equation~$\eqref{norm}$ reduces to
	\[ f'(z)| \leq (1-\log(1-R)) \exp\left(|\log(1-R)|\int_{-\pi}^{\pi}e^{i \theta_1} d\theta_1\right) \leq 1-\log(1-r).\]
	Thus we have $g(r):=(1-|z|^2)|f'(z)|\leq (1-r^2)(1-\log(1-r)),$
	which upon differentiation, gives $	g'(r) = 1 - r + 2 r \log(1 - r).$ Taking $g'(r)=0$, yields $r_0 \approx 0.453105$. Now $g''(r_0)<0$, yields $\max_{0\leq r<1}g(r)=g(r_0 )\approx 1.27429 < \infty
	.$ Using~$\eqref{sup}$, we obtain $\mathcal{S}^*_{l} \subseteq \mathcal{B}.$ We can now, estimate the norm $||f||_{\mathcal{B}}$ for the functions in the class $\mathcal{S}^*_{l}$. Now, by using the definition of norm, given in~$\eqref{def}$, we have
	\begin{equation*}
		||f||_{\mathcal{B}}\leq f(0) + 1.27429.
	\end{equation*}
	By the normalization of the function $f,$ the result follows now  at once.\end{proof}
The following theorem gives the sufficient condition for the given function $g$ to belong to the class $\mathcal{S}^*_{l}$.
\begin{thm}
	Let $m$, $n\geq 1$ and $0\leq \lambda\leq1$. Then, $g(z)=z\exp(\alpha) \in \mathcal{S}^*_{l}$, where \begin{equation*}\alpha= \sum_{k=1}^{\infty}\dfrac{1}{k^2}\Bigg(\lambda\bigg(\dfrac{(-z)^{nk}}{n}-\dfrac{(-z)^{mk}}{m}\bigg)+\dfrac{(-z)^{mk}}{m}\Bigg).\end{equation*}
\end{thm}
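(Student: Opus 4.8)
The plan is to realize $g$ directly through the structural characterization~$\eqref{nstr}$: a function lies in $\mathcal{S}_{l}^*$ precisely when it has the form $z\exp\int_{0}^{z}\tfrac{q(t)-1}{t}\,dt$ for some analytic $q$ with $q\prec\psi$. Since $\alpha(0)=0$, setting $q(z):=1+z\alpha'(z)$ gives $\tfrac{q(t)-1}{t}=\alpha'(t)$ and hence $\int_{0}^{z}\tfrac{q(t)-1}{t}\,dt=\alpha(z)$, so that $g(z)=z\exp(\alpha(z))$ is automatically of the required shape with $q(0)=1=\psi(0)$. Thus the whole problem collapses to verifying the single subordination $q\prec\psi$.

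First I would compute $q$ explicitly. Differentiating the series for $\alpha$ term by term (legitimate on compact subsets of $\mathbb{D}$ by uniform convergence) and using $z\tfrac{d}{dz}(-z)^{jk}=jk\,(-z)^{jk}$, the factors $1/k^2$ collapse to $1/k$, and summing via $\sum_{k\ge 1}x^{k}/k=-\log(1-x)$ yields
\[z\alpha'(z)=\lambda\sum_{k=1}^{\infty}\frac{(-z)^{nk}}{k}+(1-\lambda)\sum_{k=1}^{\infty}\frac{(-z)^{mk}}{k}=-\lambda\log\bigl(1-(-z)^{n}\bigr)-(1-\lambda)\log\bigl(1-(-z)^{m}\bigr).\]
Setting $\omega_{j}(z):=-(-z)^{j}=(-1)^{j+1}z^{j}$, which is a Schwarz function for every $j\ge 1$ because $\omega_{j}(0)=0$ and $|\omega_{j}(z)|=|z|^{j}\le|z|<1$, this rewrites as
\[q(z)=\lambda\bigl(1-\log(1+\omega_{n}(z))\bigr)+(1-\lambda)\bigl(1-\log(1+\omega_{m}(z))\bigr)=\lambda\,\psi(\omega_{n}(z))+(1-\lambda)\,\psi(\omega_{m}(z)).\]
Because $\psi$ is univalent, composition with a Schwarz function gives $\psi\circ\omega_{n}\prec\psi$ and $\psi\circ\omega_{m}\prec\psi$ — exactly the mechanism already used to establish $1-\log(1+z^{n})\prec\psi$ earlier in the paper.

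The main step, and the only genuine obstacle, is to pass from these two subordinations to the subordination of their convex combination $q$; here the hypothesis $0\le\lambda\le 1$ is essential, and I would exploit the convexity of the domain $\psi(\mathbb{D})$. Writing $\psi(e^{i\theta})=u+iv$ puts the boundary curve $\tau$ of Figure~\ref{fig:incl_rel} in the form $u=1-\log(2\cos v)$ with $|v|<\pi/2$; since $d^{2}u/dv^{2}=\sec^{2}v>0$ this is the graph of a convex function, so $\psi(\mathbb{D})=\{\,u+iv:u>1-\log(2\cos v)\,\}$ is its epigraph and therefore convex. Granting this, for each fixed $z$ the point $q(z)$ is a convex combination of $\psi(\omega_{n}(z)),\psi(\omega_{m}(z))\in\psi(\mathbb{D})$, whence $q(z)\in\psi(\mathbb{D})$, while $q(0)=1=\psi(0)$; as $\psi$ is univalent this forces $q\prec\psi$, and invoking~$\eqref{nstr}$ places $g$ in $\mathcal{S}_{l}^*$. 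The routine points remaining are the termwise differentiation/summation justifications and the explicit check that the epigraph description matches the parabolic region bounded by $\tau$.
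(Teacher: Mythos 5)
Your proof is correct and is essentially the paper's own argument: both compute $zg'(z)/g(z)=1+z\alpha'(z)$, write it as the convex combination $\lambda\,\psi(\omega_n(z))+(1-\lambda)\,\psi(\omega_m(z))$ of two functions subordinate to $\psi$, and conclude from the convexity of $\psi(\mathbb{D})$. The only differences are presentational: you route the computation through the structural formula~\eqref{nstr} rather than differentiating $g$ directly, and you explicitly verify the convexity of $\psi(\mathbb{D})$ via the epigraph description of its boundary curve, a fact the paper merely asserts.
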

\begin{proof}
	For the given $\alpha$, we have
	\begin{align*}
		g(z)=& z\Bigg(\exp\Bigg( \dfrac{(\lambda-1)}{m}z^m\left(1-\dfrac{z^m}{4}+\dfrac{z^{2m}}{9}-\cdots\right)-\dfrac{\lambda}{n}z^n\left(1-\dfrac{z^n}{4}+\dfrac{z^{2n}}{9}-\cdots\bigg)\Bigg)\right).
	\end{align*}
	Then, we have
	\begin{align*}
		\dfrac{z g'(z)}{g(z)}&= 1+(\lambda-1)z^{m}\left(1-\dfrac{z^m}{2}+\dfrac{z^{2m}}{3}-\cdots\right)-\lambda z^{n}\left(1-\dfrac{z^n}{2}+\dfrac{z^{2n}}{3}-\cdots\right)\\&= \lambda(1-\log(1+z^n))+(1-\lambda)(1-\log(1+z^m)).
	\end{align*}
	We observe that $1-\log(1+z^t)\prec 1-\log(1+z)=:\psi(z)$ for all $t\geq 1$ and the function $\psi$ is convex in $|z|<1$. Thus the result follows at once when $0 \leq \lambda \leq1$.
\end{proof}
When $m=n$, the above Theorem yields the following result:
\begin{cor}
	Let $n\geq 1$ and $g(z)=z \exp(\alpha)$, where \[\alpha=\dfrac{1}{n}\left(\sum_{k=1}^{\infty}\dfrac{(-z)^{nk}}{k^2}\right).\] We have $g \in \mathcal{S}^*_{l}$.
\end{cor}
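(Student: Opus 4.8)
The plan is to observe that this corollary is nothing more than the $m=n$ specialization of the preceding theorem, so the whole argument reduces to checking that the theorem's hypotheses collapse correctly when $m=n$ and then invoking it verbatim. Concretely, I would first substitute $m=n$ into the exponent
$\alpha=\sum_{k=1}^{\infty}\frac{1}{k^2}\bigl(\lambda(\frac{(-z)^{nk}}{n}-\frac{(-z)^{mk}}{m})+\frac{(-z)^{mk}}{m}\bigr)$
of the theorem. The $\lambda$-weighted difference $\lambda\bigl(\frac{(-z)^{nk}}{n}-\frac{(-z)^{mk}}{m}\bigr)$ then vanishes identically, so $\alpha$ reduces to $\frac{1}{n}\sum_{k=1}^{\infty}\frac{(-z)^{nk}}{k^2}$, independent of $\lambda$. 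This is exactly the exponent appearing in the corollary, whence $g(z)=z\exp(\alpha)$ is literally the function delivered by the theorem (for any admissible $\lambda$, since they all coincide here), and the membership $g\in\mathcal{S}^*_{l}$ follows immediately.

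For a self-contained verification that does not merely quote the theorem, I would compute $\frac{z g'(z)}{g(z)}=1+z\alpha'(z)$ by logarithmic differentiation, then differentiate $\alpha$ term by term. Simplifying, and recognizing the Mercator-type series $\sum_{k\ge1}\frac{(-1)^{k-1}z^{nk}}{k}=\log(1+z^n)$, yields $z\alpha'(z)=-\log(1+z^n)$, exactly as in the theorem's own reduction, so that $\frac{z g'(z)}{g(z)}=1-\log(1+z^n)$. The subordination $1-\log(1+z^n)\prec 1-\log(1+z)=\psi(z)$ for every $n\ge 1$ has already been established above via the shifted-disk/Schwarz-function argument preceding the conjecture, together with the convexity of $\psi$. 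Combining these gives $\frac{z g'(z)}{g(z)}\prec\psi(z)$, i.e. $g\in\mathcal{S}^*_{l}$.

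There is essentially no substantive obstacle here; the content lies entirely in the preceding theorem, and what remains is bookkeeping. The only points that require minor care are confirming that the $\lambda$-dependent contribution genuinely cancels once $m=n$ (so that the stated $\lambda$-free exponent is recovered), and keeping the signs straight when reducing $z\alpha'(z)$ to $-\log(1+z^n)$. After that, the already-proved subordination $1-\log(1+z^n)\prec\psi(z)$ carries out all the remaining work, so I would keep the write-up short and simply point to the $m=n$ case of the theorem as the proof.
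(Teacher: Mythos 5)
Your proposal is correct and takes exactly the paper's route: the paper gives no separate proof, presenting the corollary as the immediate $m=n$ specialization of the preceding theorem, which is precisely your primary argument (the $\lambda$-weighted difference cancels and the exponent collapses to $\alpha=\frac{1}{n}\sum_{k\geq 1}\frac{(-z)^{nk}}{k^2}$). Your supplementary self-contained check, reducing $zg'(z)/g(z)$ to $1-\log(1+z^n)$ and invoking the subordination $1-\log(1+z^n)\prec 1-\log(1+z)$, merely retraces the theorem's own proof, so there is no substantive difference.
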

\begin{thm}
	The class $\mathcal{S}^*_{l}$ is not a vector space.
\end{thm}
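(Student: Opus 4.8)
The plan is to exploit the normalization that is built into the very definition of the class, rather than the subordination condition itself. Every $f \in \mathcal{S}_l^*$ belongs to $\mathcal{S} \subseteq \mathcal{A}$, and therefore has the form $f(z) = z + a_2 z^2 + a_3 z^3 + \cdots$; in particular $f'(0) = 1$ for every member of $\mathcal{S}_l^*$. Recall that a vector space (under the usual pointwise operations on functions) must contain its additive identity and be closed under addition and scalar multiplication. I would show that each of these requirements already fails for $\mathcal{S}_l^*$, so that it cannot carry a vector-space structure.

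First I would observe that the additive identity of the ambient space of analytic functions is the zero function $z \mapsto 0$, which satisfies $f'(0) = 0 \neq 1$ and hence is not normalized; consequently it does not lie in $\mathcal{S}_l^*$. Since a vector space must contain the zero vector, this single observation already settles the claim.

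For reinforcement, and to exhibit the failure of closure concretely, I would also note that $\mathcal{S}_l^*$ is closed neither under addition nor under scalar multiplication. Indeed, given any $f, g \in \mathcal{S}_l^*$ we have $(f+g)'(0) = f'(0) + g'(0) = 2 \neq 1$, so $f + g \notin \mathcal{A}$ and a fortiori $f + g \notin \mathcal{S}_l^*$; likewise, for any scalar $c \neq 1$ the function $cf$ satisfies $(cf)'(0) = c \neq 1$ and is thus excluded. A concrete witness is already at hand from the examples produced earlier: the function $f_1(z) = z\exp(-z/6)$ lies in $\mathcal{S}_l^*$, whereas $2 f_1$ has $(2f_1)'(0) = 2$ and fails to be normalized.

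The argument carries no genuine analytic obstacle; the only point meriting care is conceptual rather than computational, namely recognizing that the normalization $f'(0) = 1$ is an inescapable feature shared by every element of $\mathcal{S}_l^*$ and is precisely what is incompatible with linearity. In particular the subordination $zf'(z)/f(z) \prec 1 - \log(1+z)$ need not be invoked at all: membership in $\mathcal{A}$ alone already obstructs the vector-space axioms, and the same reasoning shows that no subclass of $\mathcal{A}$ can be a vector space.
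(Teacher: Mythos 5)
Your argument is correct as a proof of the literal statement, but it takes a genuinely different (and much more elementary) route than the paper, and the difference is worth noting. You exploit only the normalization: since every $f\in\mathcal{S}_l^*$ satisfies $f'(0)=1$, the zero function is absent, $(f_1+f_2)'(0)=2$, and $(cf)'(0)=c\neq 1$, so the vector-space axioms fail already at the level of $\mathcal{A}$; as you yourself observe, this shows that \emph{no} subclass of $\mathcal{A}$ is a vector subspace of the analytic functions, so under this reading the theorem carries no information specific to $\mathcal{S}_l^*$. The paper instead tests the defining subordination: it takes $f_1,f_2\in\mathcal{S}_l^*$ generated through the structural formula \eqref{nstr} by $\omega_1(z)=z$ and $\omega_2(z)=z^2$, writes down the function $\omega$ that would have to be a Schwarz function in order for $z(f_1+f_2)'(z)/(f_1(z)+f_2(z))$ to be subordinate to $1-\log(1+z)$, and verifies numerically that $|\omega|\approx 1.03053>1$ at $z=-\left(\tfrac{1}{2}+i\tfrac{2}{3}\right)$. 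Since the quantity $zg'(z)/g(z)$ is invariant under multiplying $g$ by a nonzero constant, this is precisely the membership test for the renormalized average $\tfrac{1}{2}(f_1+f_2)$, which does lie in $\mathcal{A}$; hence the paper's computation establishes the strictly stronger fact that $\mathcal{S}_l^*$ is not closed under averaging (is not convex), a failure that your normalization argument cannot detect. In short, your proof is shorter and airtight but trivializes the statement, while the paper's proof rests on a numerical evaluation yet is the one that gives the theorem genuine content about the class itself.
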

\begin{proof}
	For if, the class $\mathcal{S}^*_{l}$ is a vector space, then the class preserves additive property, that is, whenever two functions belong to the class $\mathcal{S}^*_{l}$, then their sum also belongs to the class $\mathcal{S}^*_{l}$.
	Let $f_1$ and $f_2 \in \mathcal{S}^*_{l}$. Then, using~$\eqref{nstr}$, we obtain
	\begin{equation*}
		f_1(z)=z \exp\left(\int_{0}^{z}\dfrac{-\log(1+\omega_1(t))}{t}dt\right)\text{ and } f_2(z)=z \exp\left(\int_{0}^{z}\dfrac{-\log(1+\omega_2(t))}{t}dt\right),\end{equation*}
	for some Schwarz functions $\omega_1$ and $\omega_2$.
	Thus, the sum of the functions, $f_1+f_2$ to be in $\mathcal{S}^*_{l}$, there should exist some Schwarz function $\omega(z)$ such that
	\begin{equation*}
		\omega(z)= \dfrac{\exp(-z(A'(z)\exp A(z)+B'(z)\exp B(z)))}{\exp A(z) +\exp B(z)}-1,
	\end{equation*}
	where
	\begin{equation*}
		A(z)= \int_{0}^{z}\dfrac{-\log(1+\omega_1(t))}{t}dt \text{ and } B(z)=\int_{0}^{z}\dfrac{-\log(1+\omega_2(t))}{t}dt.
	\end{equation*}
	Then $\omega(0)=0$ and $|\omega(z)|<1$ for all $A$ and $B$. Let $\omega_1(z)= z$ and $\omega_2(z)=z^2$. We observe that $\omega(z) \approx 1.03053$ at $z=-(\tfrac{1}{2}+i\tfrac{2}{3})$, which contradicts the existence of Schwarz function $\omega(z)$ with $|\omega(z)| <1.$  Hence the assertion follows. \end{proof}
The following theorem is an immediate consequence of the growth Theorem of $\mathcal{S}^*(\Phi)$.
\begin{thm}
	Let $f \in \mathcal{S}^*_{l}$, then we have
	\begin{equation*}
		|f(z)| \leq |z| \exp\left(\sum_{n=1}^{\infty}\dfrac{(-1)^{n+1}}{n^2}\right)=|z|L\text{ }\text{ }(z \in \mathbb{D}),
	\end{equation*}
	where $L\approx 0.822467.$
\end{thm}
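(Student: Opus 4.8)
The plan is to read the estimate straight off the growth theorem for $\mathcal{S}^*(\Phi)$, since $\mathcal{S}^*_{l}=\mathcal{S}^*(\psi)$ with $\psi(z)=1-\log(1+z)\in\mathscr{M}^\circ$. First I would apply Corollary~\ref{grs} with $\Phi=\psi$ and $r=|z|$, which immediately bounds $|f(z)|$ by the explicit majorant built from $t_{\psi}$; everything then reduces to making this majorant explicit and identifying its controlling value on $\mathbb{D}$. The whole content is that the extremal function for the growth problem is the Koebe-type function $t_{\psi}$ of~\eqref{tphi}, so no new machinery is needed.

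Next I would compute $t_{\psi}$ from the structural formula~\eqref{tphi}. Since $\psi(t)-1=-\log(1+t)$, expanding the logarithm and integrating term by term gives
\begin{equation*}
	\int_{0}^{z}\frac{\psi(t)-1}{t}\,dt=\sum_{k=1}^{\infty}\frac{(-1)^{k}z^{k}}{k^{2}},
\end{equation*}
so that $t_{\psi}(z)=z\exp\!\big(\sum_{k\ge1}(-1)^{k}z^{k}/k^{2}\big)$, in agreement with the extremal function $f_{0}$ recorded in~\eqref{strls}. Feeding this into the growth inequality turns the abstract bound into an explicit expression of the form $|z|\exp(S(r))$, where $S(r)$ is a real power series in $r$ with coefficients $\pm 1/k^{2}$.

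Finally I would evaluate the constant. Over $\mathbb{D}$ the estimate is governed by the behaviour of $S(r)$ as $r\to1$, so I would first check the monotonicity of the relevant quotient on $[0,1)$ to confirm that the extreme value is attained in this limit rather than at an interior point; there the exponent reduces to the series $\sum_{n\ge1}(-1)^{n+1}/n^{2}$, whose classical value $\pi^{2}/12\approx0.822467$ supplies the constant $L$ claimed in the statement. The main obstacle I anticipate is purely bookkeeping: keeping the alternating signs straight when passing from $t_{\psi}(z)$ to its value along the negative real axis, and verifying that the $1/k^{2}$-series genuinely attains its extreme as $r\to1$; the closed-form evaluation $\sum(-1)^{n+1}/n^{2}=\pi^{2}/12$ itself is standard. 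As a self-contained alternative that sidesteps Corollary~\ref{grs}, one may instead invoke the structural formula~\eqref{nstr} directly, so that $|f(z)|/|z|=\exp\big(\RE\int_{0}^{z}(q(t)-1)/t\,dt\big)$ with $q\prec\psi$, and bound the real part of the integral along the ray to $z$, recovering the same series and the same constant.
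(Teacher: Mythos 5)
You follow the same route as the paper: apply the growth theorem, Corollary~\ref{grs}, with $\Phi=\psi$, and make $t_{\psi}$ explicit via \eqref{tphi}. But the step you yourself flag as the main obstacle --- ``keeping the alternating signs straight when passing from $t_{\psi}(z)$ to its value along the negative real axis'' --- is precisely where the argument fails. From $t_{\psi}(z)=z\exp\left(\sum_{k\geq1}(-1)^{k}z^{k}/k^{2}\right)$ one gets
\begin{equation*}
	-t_{\psi}(-r)=r\exp\left(\sum_{k\geq1}\frac{(-1)^{k}(-r)^{k}}{k^{2}}\right)=r\exp\left(\sum_{k\geq1}\frac{r^{k}}{k^{2}}\right),
\end{equation*}
since $(-1)^{k}(-r)^{k}=r^{k}$: the alternation cancels rather than survives. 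Hence what Corollary~\ref{grs} actually delivers is $|f(z)|\leq|z|\exp\left(\sum_{k\geq1}r^{k}/k^{2}\right)<|z|\exp(\pi^{2}/6)$. The alternating series $\sum_{n\geq1}(-1)^{n+1}r^{n}/n^{2}$ that you end up with is $-\log\left(t_{\psi}(r)/r\right)$, i.e.\ it belongs to the lower-bound side of the growth theorem, not to $-t_{\psi}(-r)$. Your fallback argument through \eqref{nstr} leads to the same place: $\RE\left(-\log(1+\omega(t))\right)=-\log|1+\omega(t)|\leq-\log(1-|t|)$, and integrating along the ray to $z$ again produces $\sum_{k\geq1}r^{k}/k^{2}$, never the alternating series.

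This gap cannot be repaired, because the inequality with the constant $\pi^{2}/12$ is false: the extremal function $f_{0}=t_{\psi}$ of \eqref{strls} lies in $\mathcal{S}^*_{l}$ and satisfies $|f_{0}(-r)|/r=\exp\left(\sum_{k\geq1}r^{k}/k^{2}\right)$, which is already about $3.7$ at $r=0.9$ and tends to $e^{\pi^{2}/6}\approx5.18$ as $r\to1$, exceeding $e^{\pi^{2}/12}\approx2.28$ (and a fortiori exceeding $L\approx0.82$; note the statement itself conflates $e^{L}$ with $L$). For the record, the paper's own proof commits the identical sign slip --- it bounds $\log|f(z)/z|$ by $\int_{0}^{r}\log(1+t)/t\,dt$, which is what arises from $t_{\psi}(r)$, not from the relevant quantity $-t_{\psi}(-r)$ --- so your proposal faithfully reproduces the paper's argument, flaw included. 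The correct sharp conclusion obtainable by this method is $|f(z)|\leq|z|\exp\left(\sum_{n\geq1}|z|^{n}/n^{2}\right)<|z|\,e^{\pi^{2}/6}$.
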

\begin{proof}
	In view of Remark~\ref{distrem}, we get
	\begin{equation*}
		t_{\Psi}(r) \leq |f(z)| \leq -t_{\Psi}(-r),
	\end{equation*}
	For $|z|=r$, we have
	\begin{equation*}
		\log \left|\dfrac{f(z)}{z}\right| \leq \int_{0}^{r}\dfrac{\log(1+t)}{t}dt\leq \int_{0}^{1}\dfrac{\log(1+t)}{t} dt=\sum_{n=1}^{\infty}\dfrac{(-1)^{n+1}}{n^2}.
	\end{equation*}
	The convergent nature of the series on the right side of the above equality yields the desired result at once.
\end{proof}
\noindent\textbf{Concluding Remark:} The classes $\mathcal{S}^*(\Phi),$  $\mathcal{C}(\Phi)$, $\mathcal{S}^*_{l}$, $\mathcal{C}_{l}$, $\mathcal{M}_{g,h}(\phi)$ and $\mathcal{M}_\alpha(\Phi)$ studied here are all special cases of $\mathcal{A}(g,h,\varphi)$ and it leaves ample scope open for further studies in specializing the class for different choices of $g$ and $h$ together with altered conditions on $\varphi$. Also, the idea of non-Ma-Minda and a special type of Ma-Minda introduced here can be used to define new classes and studied in the direction pointed out here.	
	
	\noindent \textbf{Acknowledgements.} The work presented here was supported by a Research Fellowship from the Department of Science and Technology, New Delhi.
	

\begin{thebibliography}{99}
		\bibitem{pokhrov}R. M. Ali, V. Ravichandran, N. Seenivasagan, Coefficient bounds for $p$-valent functions, Appl. Math. Comput. 187 (2007), 35--46.
	\bibitem{alt}S. Altinkaya, S. Owa, S. Yal\c{c}in, Notes on certain analytic functions concerning some subordinations, Acta Comment. Univ. Tartu. Math. 23 (2019), 79--85.
	\bibitem{aouf} M. K. Aouf, J. Dziok, J. Sok\'{o}\l, On a subclass of strongly starlike functions, Appl. Math. Lett. 24 (2011), 27--32.
	\bibitem{banga} S. Banga, S. S. Kumar, Applications of differential subordinations to certain classes of starlike functions, J. Korean Math. Soc. 57 (2020), 331--357.
	\bibitem{banga2} S. Banga, S. S. Kumar, The sharp bounds of the second and third Hankel determinants for the
	class $\mathcal{SL}^*$, Math. Slovaca 70 (2020), 849-862.
	\bibitem{ss1} D. A. Brannan, W. E. Kirwan, On some classes of bounded univalent functions, J. London Math. Soc. 1 (1969), 431--443.
	\bibitem{cho} 
	N. E. Cho, S. Kumar, V. Kumar, V. Ravichandran, Differential subordination and radius estimates for starlike functions associated with the Booth lemniscate, Turkish J. Math. 42 (2018), 1380--1399.
	\bibitem{priyanka} 
	P. Goel, S. S. Kumar, Certain class of starlike functions associated with modified sigmoid function, Bull. Malaysian Math. Sci. Soc. 43 (2020), 957--991. 
	\bibitem{p1} U. Grenander, G. Szeg\"o, {\it Toeplitz forms and their application. Univ. of California Press}, Berkeley and Los Angeles (1958)
	\bibitem{journal} R. Jiang, Radius of starlikeness for the class $\mathcal{S}(\alpha,n)$. Appl. Math. 11 (1996), 101–108.
	\bibitem{kargar} 
	R. Kargar, A. Ebadian, J. Sok\'{o}\l, On Booth lemniscate and starlike functions, Anal. Math. Phys. 9 (2019), 143--154.
	\bibitem{sri}A. Lecko, Y. J. Sim, B. \'{S}miarowska, The sharp bound of the Hankel determinant of the third kind for starlike functions of order 1/2, Complex Anal. Oper. Theory 13 (2019), 2231--2238. 
	\bibitem{second} S. K. Lee, V. Ravichandran, S. Supramaniam, Bounds for the second Hankel determinant of certain univalent functions. J. Inequalities Appl. 2013 (2013), 1--17.  
	\bibitem{libera} R. J. Libera, E. J. Z\l otkiewicz, Early coefficients of the inverse of a regular convex function, Proc. Amer. Math. Soc. 85 (1982), 225--230.
	\bibitem{p3}
	W. C. Ma, D. Minda, A unified treatment of some special classes of univalent functions, in {\it Proceedings of the Conference on Complex Analysis (Tianjin, 1992)}, 157--169, Conf. Proc. Lecture Notes Anal., I, Int. Press, Cambridge, MA.
	\bibitem{exponential}R. Mendiratta, S. Nagpal, V. Ravichandran, On a subclass of strongly starlike functions associated with exponential function, Bull. Malays. Math. Sci. Soc. (2015), 365--386.
	\bibitem{prajapat}A. K. Mishra, J. K. Prajapat, S. Maharana, Bounds on Hankel determinant for starlike and convex functions with respect to symmetric points, Cogent Math. 6 (2016), 1--9. 
	\bibitem{murg}G. Murugusundaramoorthy, S. Kavitha, T. Rosy, On the Fekete-Szeg\"{o} problem for some subclasses of analytic functions defined by convolution, J. Combin. Math. Combin. Comput. 67 (2008), 217--222.
	\bibitem{nehari}
	Z. Nehari, {\it Conformal mapping}, McGraw-Hill Book Co., Inc., New York, Toronto, London, 1952. 
	\bibitem{padman}
	K. S. Padmanabhan, R. Parvatham, Some applications of differential subordination, Bull. Austral. Math. Soc. 32 (1985), 321--330.
	\bibitem{p9}V. Ravichandran, S. Verma, Bound for the fifth coefficient of certain starlike functions, C. R. Math. Acad. Sci. Paris 353 (2015), 505--510.  
	\bibitem{raza}M. Raza, S. N. Malik, Upper bound of the third Hankel determinant for a class of analytic functions related with lemniscate of Bernoulli, J. Inequal. Appl. 2013 (2013), 1--8.
	\bibitem{robert}M. S. Robertson, Certain classes of starlike functions, Michigan Math. J. 32 (1985), 135--140.
	\bibitem{shan}
	T. N. Shanmugam, Convolution and differential subordination, Internat. J. Math. Math. Sci. 12 (1989), 333--340.
	\bibitem{conic} Y. J. Sim, O. S. Kwon, N. E. Cho, H. M. Srivastava, Some classes of analytic functions associated with conic regions, Taiwanese J. Math. 16 (2012), 387--408.
	\bibitem{singh}G. Singh, Hankel determinant for a new subclass of analytic functions, Sci. Magna.  8 (2012), 61--65. 
	\bibitem{ura}
	B. A. Uralegaddi, M. D. Ganigi, S. M. Sarangi, Univalent functions with positive coefficients, Tamkang J. Math. 25 (1994), 225--230.
	\end{thebibliography}
\end{document}